\documentclass{amsart}

\usepackage{amssymb,amsmath}
\usepackage{graphicx}
\usepackage{enumerate}
\usepackage{colortbl}
\usepackage{cite}
\usepackage{xcolor}

\usepackage{amsrefs}

\usepackage{verbatim}

\usepackage{geometry}
\usepackage{comment}
\usepackage{graphicx,caption,subcaption}
\usepackage{tikz}
\usepackage{multirow}

\usepackage[title,titletoc]{appendix}

\colorlet{linkequation}{blue}
\usepackage[colorlinks,plainpages=true,pdfpagelabels,hypertexnames=true,colorlinks=true,pdfstartview=FitV,linkcolor=blue,citecolor=red,urlcolor=black]{hyperref}
\usepackage{hyperref}

\PassOptionsToPackage{unicode}{hyperref}
\PassOptionsToPackage{naturalnames}{hyperref}

\definecolor{dgreen}{rgb}{0,0.5,0}
\definecolor{violet}{rgb}{0.5,0,0.5}
\definecolor{dred}{rgb}{0.7,0,0}
\definecolor{ddred}{rgb}{0.5,0,0}
\definecolor{dblue}{rgb}{0,0,0.5}
\definecolor{ddblue}{rgb}{0,0,0.3}
\definecolor{llgray}{rgb}{0.9,0.9,0.9}
\definecolor{lgray}{rgb}{0.7,0.7,0.7}

\newtheorem{defn}{Definition}[section]
\newtheorem{definition}[defn]{Definition}
\newtheorem{lemma}[defn]{Lemma}
\newtheorem{proposition}[defn]{Proposition}
\newtheorem{theorem}[defn]{Theorem}
\newtheorem{assumption}[defn]{Assumption}

\newtheorem{remark}[defn]{Remark}
\numberwithin{equation}{section}

\newcommand{\ep}{{ \epsilon  }}

\newcommand{\bq}{\begin{equation}}
\newcommand{\eq}{\end{equation}}

\newcommand{\R}{{ \mathbb R}^n}

\newcommand{\bke}[1]{\left( #1 \right)}

\newcommand{\bket}[1]{\left\{ #1 \right\}}

\newcommand{\abs}[1]{\left| #1 \right|}

\newcommand{\om}{{ \omega  }}

\newcommand {\la}{\lambda}
\newcommand{\na}{\nabla}

\newcommand {\al}{\alpha}
\newcommand {\be}{\beta}

\newenvironment{pfthm-NS}{{\par\noindent
            \textbf{Proof of Theorem \ref{maintheorem(NS)}}\quad}}{\hfill $\square$}
            
\newenvironment{pfthm-SS-19}{{\par\noindent
            \textbf{Proof of Theorem \ref{theorem260403}}\quad}}{\hfill $\square$}

\newcommand{\Rn}{{\mathbb R}^{n-1}}
\newcommand{\De}{\Delta}
\newcommand{\te}{\theta}
\newcommand{\de}{\delta}
\newcommand{\Ga}{\Gamma}

\AtBeginDocument{%
   \def\MR#1{}
}
\nocite{*}

\begin{document}

\title[Boundary Layer Separation for Incompressible Fluid Flow ]{On the Existence of Boundary Layer Separation for Incompressible Fluid Flow in the Half-Space}
%Existence of weak solutions for nonlinear diffusion equations with drift for measure data
%
%Existence of weak solutions for nonlinear diffusion equations with measure data and divergence type of drift terms

\author{TongKeun Chang}
\address{TongKeun chang: Department of
Mathematics, Yonsei University, 50 Yonsei-ro, Seodaemun-gu, Seoul,
South Korea 120-749 } \email{chang7357@yonsei.ac.kr }

\author{KyungKeun Kang}
\address{KyungKeun Kang : Department of
Mathematics, Yonsei University, 50 Yonsei-ro, Seodaemun-gu, Seoul,
South Korea 120-749 } \email{kkang@yonsei.ac.kr }

\thanks{  }

\date{}

\makeatletter
\@namedef{subjclassname@2020}{
  \textup{2020} Mathematics Subject Classification}
\makeatother
\subjclass[2020]{35Q30, 35B50}

\begin{abstract}
We consider the Stokes system in the half-space with localized boundary data. We prove that a boundary layer separation point exists provided that a certain singular integral determined by the boundary data is negative. On the other hand, if this integral is strictly positive, then boundary layer separation does not occur.
When boundary layer separation occurs, we also investigate the dynamics of the separation point and the sign of the pressure gradient. Furthermore, by a perturbation argument, we construct solutions to the Navier--Stokes equations in the half-space that exhibit the same qualitative behavior as in the Stokes case.
\end{abstract}
\keywords{Existence of boundary layer separation and adverse pressure gradient,    Stokes system, Navier-Stokes system, half space }
\maketitle
%\tableofcontents

\section{Introduction}

In this paper, we consider the Stokes system in the half space.
\begin{equation}\label{Stokes-10}
w_t - \De w+  \na p =0\qquad \mbox{div } w =0 \quad \mbox{ in }
\, {\mathbb R}^n_+ \times (0, T),
\end{equation}
where  $ w$  indicates the velocity of the fluid and $p$ is the associated pressure  
with zero initial data and non-zero boundary data:
\begin{equation}\label{Stokes-icbc}
\left. w \right|_{t=0}=0,
\qquad
\left. w \right|_{x_n=0}=g.
\tag{1.2}
\end{equation}
Here,  the boundary data $g:\mathbb{R}^{n-1}\times [0, \infty) \to \mathbb{R}^n$
are of separable form and have  compact support in the spatial and temporal variables, which will be specified later.

Previous works have established the existence of singular solutions to the Stokes equations whose velocity gradients are unbounded near the boundary, away from the support of the boundary data, while both the velocity and its gradient remain locally square-integrable (see \cite{Kang05,CK20,KangTsai22FE,CK24,KM}; see also \cite{CH2,CCK2,KangTsai22} for related works).
An analogous result was obtained in \cite{CK25} for the case of a nonzero localized external force, rather than nonzero boundary data. More recently, it was shown in \cite{CK26} that, due to a nonlocal effect, the velocity itself may become unbounded away from the support of the boundary data even under the no-slip boundary condition.

In this paper, we focus on the sign change of the velocity field near the boundary. Assuming that the boundary data are sufficiently regular, this phenomenon implies that the normal derivative of the velocity vanishes and leads to the formation of a boundary layer separation point (see Definition \ref{def-blsp} below).

We briefly review previous studies on boundary layer separation. To investigate this phenomenon, Prandtl \cite{P} derived an asymptotic form of the Navier–Stokes equations near the boundary in a two-dimensional half-plane, namely the Prandtl equations, which are written as follows.
\[
\partial_t u+u\partial_x u+v\partial_y v-\partial_y^2 u=-\partial_x p_E,\qquad \partial_x u+\partial_y v=0
\]
with initial and boundary conditions
\[
u|_{t=0}=u_0,\quad u|_{y=0}=0,\quad \displaystyle\lim_{y\rightarrow \infty}u(x,y,t)=u_E(x,t).
\]
Here \(u_E\) and \(p_E\) denote the tangential velocity and the pressure
of the outer Euler flow evaluated at the boundary. In particular, they satisfy
the Bernoulli relation
$\partial_t u_E + u_E \partial_x u_E = -\partial_x p_E $.
Thus the pressure gradient appearing in the Prandtl equation is determined
by the corresponding outer Euler flow.

The steady Prandtl theory exhibits a sharp distinction between favorable and
adverse pressure gradients.  For \(d p_E/dx\leq 0\), Oleinik \cite{Oleinik}
proved global-in-\(x\) well-posedness, whereas for \(d p_E/dx>0\) solutions may
separate at a finite streamwise position \(x^*>0\).  At separation the wall
shear degenerates,
$\partial_y u(x,0)\to 0$
as $x\nearrow x^*$.
which corresponds to the classical Goldstein singularity \cite{Goldstein}.
This singular behavior, including the vanishing rate of the wall shear, was
rigorously analyzed in \cite{Dali-Mas} (see also \cite{SWZ}).  The
post-separation steady regime is still much less understood, with recent
progress mainly concerning flow-reversal profiles such as Falkner--Skan
solutions \cites{masreversal,dalireversal}.  

In the unsteady case, numerical
studies \cites{vanshen1,vanshen2} led to the conjecture that finite-time
separation is accompanied by singularity formation.  This was proved for a
trivial outer flow under symmetry assumptions in \cite{eng}, and for the
nontrivial Euler trace of \cite{vanshen1} in \cite{kvw}, via a convexity
argument involving a Lyapunov functional related to the displacement thickness (see also \cites{unsteadyinvisicid,unsteadyviscous}).

Recently, the triple-deck model has been proposed as a refined description of the flow near separation, and substantial progress has been made on its mathematical analysis, including local well-posedness and ill-posedness results (see \cites{Iyervicol, iyermaekawa, Dietertvaret, Varetiyer} and the references therein).

Our main objective of this paper is to prove the existence of a boundary layer
separation point for the Stokes system in the half-space.  A related flow-reversal
phenomenon was established in \cite{CKM26}, where it was shown that each
component of the velocity field changes sign at least once.  This result suggests
that even the linear Stokes system may exhibit reversal mechanisms connected to
boundary layer separation.  In \cite{CKM26}, however, the localized nonzero
boundary data are singular in the time variable, and the normal derivative of
the tangential velocity becomes unbounded at the singular time around which the
sign change occurs.  In contrast, the present work treats sufficiently regular
localized boundary data, for which the normal derivatives of the velocity field
remain bounded.  We then establish the existence of a boundary-layer separation
point under appropriate assumptions on certain time-integral quantities
determined by the boundary data.

The main tool in our analysis is the representation formula for solutions to the Stokes system in the half-space with nonzero boundary data (see \eqref{rep-bvp-stokes-w} and \eqref{Poisson-tensor-K}). Using this representation formula together with the identity \eqref{1006-3}, we decompose the solution into five terms, as in \eqref{repre0926}. From this decomposition, we derive a formula for the normal derivative of the tangential components of the velocity on the boundary (see Lemma~\ref{thoe1021-1}).
Since the tangential components of the boundary data vanish and only the normal component is nonzero, this formula simplifies considerably and reduces to the sum of two terms; see \eqref{shortg}. At this stage, the integral quantity \eqref{m-t}, which is associated with the temporal part of the boundary data, turns out to play a crucial role in determining the sign of the normal derivative. One of the main steps in the analysis is to show that the sign of this integral determines whether boundary layer separation occurs (see Theorem \ref{maintheorem}).

In the case where a boundary layer separation point exists, it is not stationary; rather, it moves away from the origin and tends to infinity in finite time. In fact, the time at which the boundary layer separation point escapes to infinity can also be characterized in terms of the integral (see Theorem \ref{theorem1221}). This characterization is also obtained through a careful analysis of the representation formula for the normal derivative on the boundary.
Furthermore, if the temporal part of the boundary data is unimodal, namely, increasing and then decreasing to zero, then the boundary layer separation point is unique at each time for which it exists (see Theorem~\ref{theorem260403}).

As for the pressure, using the representation formula \eqref{representationpressure}, we prove that the tangential derivatives of the pressure undergo a sign change. In particular, if the boundary data are decreasing in time, then the pressure is strictly increasing in the tangential spatial direction (see Theorem \ref{pressure1103}).

Finally, we show that  similar phenomena can be observed for the Navier-Stokes equations, namely
\begin{equation}\label{nse-30}
u_t - \Delta u + (u\cdot\nabla)u + \nabla q = 0,
\qquad
\operatorname{div} u = 0
\quad \text{in } \mathbb{R}^n_+ \times (0,T),
\end{equation}
together with the initial and boundary conditions \eqref{Stokes-icbc}. Using a perturbative argument, we construct solutions that exhibit behavior analogous to that of the Stokes system (see Theorem~\ref{maintheorem(NS)}).

%For convenience, we say that $(x', t) \in \mathbb{R}^{n-1} \times (0, \infty)$ satisfies the condition ${\mathcal P}^i_{+}$ (or ${\mathcal P}^i_{-}$) if there exists $\epsilon(x', t) > 0$ such that $w_i(x', x_n, t) > 0$ (or $w_i(x', x_n, t) < 0$) for all $0 < x_n < \epsilon$.

Boundary-layer separation occurs at a point on the boundary where the near-wall
flow moves in the downstream direction up to that point and begins to reverse
thereafter.  Motivated by this physical picture, we introduce a mathematical
definition of a boundary-layer separation point.
For convenience, we first introduce the following notation.  

We say that
$(x',t)\in \mathbb{R}^{n-1}\times (0,\infty)$
satisfies the condition \({\mathcal P}^i_{+}\) respectively, \({\mathcal P}^i_{-}\),
if there exists \(\epsilon=\epsilon(x',t)>0\) such that
\[
w_i(x',x_n,t)>0
\quad
\text{respectively,}\quad
w_i(x',x_n,t)<0
\]
for all \(0<x_n<\epsilon\).
We now define a boundary-layer separation point as follows.

\begin{definition}\label{def-blsp}
Let $1 \leq i \leq n$. 

\begin{itemize}
\item[(1)]
We say that $(x', t) \in \mathbb{R}^{n-1} \times (0, \infty)$ satisfies the condition ${\mathcal P}^i_{+}$ (or ${\mathcal P}^i_{-}$) if there exists $\epsilon(x', t) > 0$ such that $w_i(x', x_n, t) > 0$ (or $w_i(x', x_n, t) < 0$) for all $0 < x_n < \epsilon$.

\item[(2)]
We say that $(x', t^*_i)$ is a \textbf{boundary layer separation point} of $w_i$ if the following hold:
\begin{enumerate}
    \item[(i)] For all $0 < t < t^*_i$, $(x', t)$ satisfies the condition ${\mathcal P}^i_+$.
    \item[(ii)] There exists a sequence $\{t_k\}_{k=1}^\infty$ such that $t_k > t^*_i$, $\lim_{k \to \infty} t_k = t^*_i$, and each $(x', t_k)$ satisfies the condition ${\mathcal P}^i_-$.
\end{enumerate}
\end{itemize}
\end{definition}

We will assume that the fluid is flowing into  $\R_+$ from the origin on the boundary. Considering this physical situation, we specify  boundary data $g:\mathbb R^{n-1}\times  (0, \infty) \rightarrow  \R$ with only non-zero $n$-th component defined
as follows:
\begin{align}\label{0502-6}
g(y', s)  = a \psi (|y'|)\phi(s)\textbf{e}_n, \quad \psi \in C^\infty_c (B'_1), \quad  \psi \geq 0, \quad \psi  >  1 \quad \mbox{in} \quad B'_\frac12, 
\end{align}
where $a> 0$ is a small positive number determined later on for existence of solution to  Navier-Stokes equations.

Based on the physical observation that a decrease in fluid velocity after an initial increase creates an adverse pressure gradient leading to a boundary layer separation point, we mathematically assume $\phi$ as follows  (see \cite{P}, \cite{SG1},  \cite{SG}, \cite{St}).   
\begin{assumption}\label{assume1}
Let $\phi \in C^{\frac12 +\frac{\al}2} ([0, 2 ) )$ for some $ 0 < \al  <1$,  and $ \phi$ be monotonically increasing in $(0,1)$ and monotonically decreasing in $(1,2)$ with $\phi(0) =0$.   
\end{assumption}

\begin{remark}
In the context of the no-slip boundary condition, it is natural to examine the normal derivative to determine the sign of the fluid velocity near the boundary. 
Under assumption \eqref{0502-6} for $g$ and the regularity of $\phi$ from  Assumption \ref{assume1}, the normal derivative of the tangential parts $w_i$  of the solution $w$ to \eqref{Stokes-10} with boundary condition $g$ is given by \eqref{shortg}. It turns out that from \eqref{shortg} that the sign of the normal derivative $D_{x_n} w_i (x', 0, t)$ is primarily determined by the sign of the following integral, for convenience, denoted by:
\begin{align}\label{m-t}
M(t) := \int_{-\infty}^t \frac{\phi(t) - \phi(s)}{(t - s)^{3/2}} \, ds.
\end{align}
\end{remark}

 \begin{assumption}\label{assume2}
 Let $\phi$ satisfy Assumption \ref{assume1} and $M(t)$ be given in \eqref{m-t}.    There exist  $ t_0, \, t_1 \in (1,2)$  with $ t_0 < t_1$  such that $ M(t) <0$ for all $ t_0 < t < t_1$.
 \end{assumption}

\begin{assumption}\label{assume3}
 Let $\phi$ satisfy Assumption \ref{assume1}  and $M(t)$ be given in \eqref{m-t}. There exists $ c_0> 0$ such that  $M(t) > c_0$ for all $ 0 < t <2$.
 \end{assumption}

We provide the conditions of $\phi$ satisfying Assumption \ref{assume2} and Assumption \ref{assume3} in Appendix \ref{proofprop0320-2}.

Now, we are ready to state our resuts.
First, we show that an adverse pressure gradient develops. More precisely, the tangential derivative of the pressure is initially negative and becomes strictly positive at a later time.  Let $i=1,2, \cdots, n$ and, for convenience, for a  fixed $x'\in \mathbb R^{n-1}$, we define $t^*_{p,i} (x')$ by
 \begin{align}\label{t-p*}
t^*_{p,i}(x')   = \sup \bket{ 0 < t< 2 \, | \, D_{x_i} p(x',0,s) <0 \quad \mbox{for all } s\in (0, t)}.
\end{align}

\begin{theorem}\label{pressure1103}
Let  $w = (w_1, \cdots, w_n)$ be a solution   of   system  \eqref{Stokes-10} with boundary data $g$, where $ g$ is defined by \eqref{0502-6}.  Let $p$ be the  corresponding pressure, and let $t_{p,i}^* (x')$  be as in \eqref{t-p*}.
If $\phi \in C^1(0,1)$ satisfies Assumption \ref{assume1}, then the following holds for each $1 \leq i \leq n-1$.
 \begin{itemize}
\item[(i)]
%Let $ 1 \leq i \leq n-1$ and $x'\in \mathbb R^{n-1}$. 
Let $x'\in \mathbb R^{n-1}$.
Assume that for some $c>0$ and $t_1 \in (0,1)$, $ \phi(t) \leq c  t \phi'(t) $ for all $ t\in (0, t_1).$
%Suppose that $    \phi(t) \leq c  t \phi'(t) $ for $ 0 <  t < t_1$ for some $ 0 < t_1 < 1$.  
There exist constants  $c_{*4}>2$ and $ c_1> 0$ such that if $ c_{*4} < |x'| < c_1 x_i$, then $t_1 < t_{p,i}^* (x')$.
%\begin{align}
%t_1 < t_{p}^* (x'),
%\end{align}
%where $t_{p}^* (x')$ is given in \eqref{t-p*}.
\item[(ii)]
%Let $ 1 \leq i \leq n-1$. 
If $ \phi'(t_2) =0 $ for some $ 0 < t_2 < 1$, then, $t_{p,i}^* (x') \leq  t_2$.
%\begin{align}
%t_{p}^* (x') \leq  t_2. 
% \end{align}
 In particular, $ t_{p,i}^* (x') \leq 1$.
 
 \item[(iii)]
 %Let $ 1 \leq i \leq n-1$. 
 %Then, 
 $D_{x_i} p(x', 0, t) > 0$ for $ 1 < t <2$.
% \begin{align}
% D_{x_i} p(x', 0, t) > 0.
% \end{align}

\end{itemize}

\end{theorem}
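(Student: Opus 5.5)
We would work directly from the pressure representation \eqref{representationpressure}, specialized to boundary data $g=a\psi(|y'|)\phi(s)\mathbf e_n$ whose only nonzero component is the normal one. For such data the pressure is determined instantaneously in time by $\phi$ and $\phi'$. We expect a formula of the form
\begin{align*}
p(x,t) = c_1\, \phi'(t) \int_{\mathbb R^{n-1}} \Gamma_1(x-y')\, a\psi(|y'|)\,dy' + c_2\, \phi(t) \int_{\mathbb R^{n-1}} \Gamma_2(x-y')\, a\psi(|y'|)\,dy',
\end{align*}
where $\Gamma_1$ is (a multiple of) $D_{x_n}$ of the Laplace fundamental solution, i.e.\ the Poisson kernel, and $\Gamma_2$ is a second-order derivative of it. The first step is to differentiate in $x_i$, $1\le i\le n-1$, and restrict to $x_n=0$. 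Off the support of $\psi$ this gives
\begin{align*}
D_{x_i}p(x',0,t)=\phi'(t)\,B_i(x')+\phi(t)\,A_i(x').
\end{align*}
Here $B_i(x')=-c\,a\int x_i-y_i\,|x'-y'|^{-n-1}\psi\,dy'$ up to constants, which for $|x'|>2$ behaves like $-c\,x_i|x'|^{-n-1}$. The term $A_i$ comes from the higher-order kernel and behaves like $c'\,x_i|x'|^{-n-3}$. The key point to establish from the explicit kernels is the sign structure: for $x_i>0$ we want $B_i(x')<0$ and $A_i(x')\ge 0$, up to the exact constants in \eqref{representationpressure}.

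For (iii): on $(1,2)$ we have $\phi'\le 0$ and $\phi>0$, so both $\phi'B_i$ and $\phi A_i$ are nonnegative and at least one is strictly positive. This gives $D_{x_i}p>0$. The sign checks of $A_i$ and $B_i$ must be done at the given $x'$; where the sign is only clear asymptotically, we would use the precise kernel expressions together with $\psi\ge0$.

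For (ii): at $t=t_2$ the $\phi'$ term vanishes, so $D_{x_i}p(x',0,t_2)=\phi(t_2)A_i(x')\ge0$. Hence the condition ``$D_{x_i}p<0$ for all $s<t$'' fails for every $t>t_2$, which forces $t^*_{p,i}(x')\le t_2$. Applying the same argument at $t=1$, where $\phi'(1)=0$ by the monotonicity in Assumption \ref{assume1} and $\phi\in C^1$ near $1$, gives $t^*_{p,i}(x')\le 1$. If $\phi'(1)$ does not exist, we would instead use (iii) together with continuity of $D_{x_i}p$ in $t$.

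For (i): we need $\phi'(t)|B_i(x')|>\phi(t)A_i(x')$ for $t\in(0,t_1)$. Using the hypothesis $\phi(t)\le c\,t\,\phi'(t)\le c\,\phi'(t)$, it suffices to have $A_i(x')<c^{-1}|B_i(x')|$. The hypothesis $|x'|<c_1x_i$ makes $x_i$ comparable to $|x'|$, so the asymptotics give $|B_i|\gtrsim |x'|^{-n}$ and $A_i\lesssim |x'|^{-n-2}$. Thus $A_i/|B_i|\lesssim|x'|^{-2}$, and taking $|x'|>c_{*4}$ large enough gives the inequality. Therefore $D_{x_i}p<0$ on $(0,t_1)$, so $t_1\le t^*_{p,i}$. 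The statement asks for strict inequality $t_1<t^*_{p,i}$; this follows if the strict inequality persists slightly past $t_1$ by continuity, or by shrinking $t_1$.

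The main obstacle is the first step: extracting from \eqref{representationpressure} the exact kernels and constants, and verifying the signs and two-sided asymptotic bounds for $A_i$ and $B_i$ uniformly for $|x'|>2$ in the cone $|x'|<c_1x_i$. We would do this by expanding $x'-y'$ around $x'$ using $|y'|<1$, and controlling the remainders by $|x'|^{-1}$ relative to the leading terms.
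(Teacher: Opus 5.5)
Your sign strategy is the same as the paper's: on $\{x_n=0\}$, for $x'$ off the support with $x_i$ comparable to $|x'|$, compare a positive $\phi(t)$-term of size $|x'|^{-n-1}$ with a $\phi'(t)$-term $-c\,a\,\phi'(t)R_i'\psi(x')$ of size $|x'|^{-n+1}$. The latter comes from $-2D_tN*'g_n$, i.e.\ from $N(\cdot,0)$ itself, not from the Poisson kernel; the Poisson extension of $\psi$ has zero tangential derivative on $\{x_n=0\}$ off the support. Your exponents are each off by one, but the ratio $|x'|^{-2}$ is right, so that is harmless.

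The genuine gap is your first step: the boundary pressure is \emph{not} determined instantaneously by $\phi(t)$ and $\phi'(t)$. The last term of \eqref{representationpressure} is nonlocal in time. Rewrite it as in \eqref{representationpressure-1}, set $x_n=0$, and split off the $g_n(z',t)$ part with \eqref{260314-1}; that part only feeds the instantaneous $\phi(t)$-term. A true memory term survives in $D_{x_i}p(x',0,t)$:
\begin{align*}
P_i(x',t)=-\frac{a}{\sqrt{4\pi}}\int_0^t\frac{\phi(s)}{(t-s)^{3/2}}\int_{\mathbb R^{n-1}}D_{x_i}\Gamma'(x'-z',t-s)\,\psi(z')\,dz'\,ds .
\end{align*}
This cannot be avoided. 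Off the support, $D_{x_n}p=D^2_{x_n}w_n$ on the boundary, and $D^2_{x_n}w_n=-\sum_k D_{x_k}D_{x_n}w_k$ carries the history integrals in \eqref{shortg}.

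For (ii) and (iii) this is harmless once you note its sign. For $z'\in B_1'$ and $x_i>1$, $-D_{x_i}\Gamma'(x'-z',\tau)=\frac{x_i-z_i}{2\tau}\Gamma'(x'-z',\tau)>0$, and $\phi\ge 0$, so $P_i\ge 0$ and it only reinforces positivity. It is even the sole positive contribution at times in $(1,2)$ where $\phi$ and $\phi'$ both vanish, which is exactly where your claim ``at least one term is strictly positive'' fails.

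For (i), however, $P_i$ has the wrong sign, and your proposal never bounds it. The missing step is the paper's estimate. For $0<t<1$, use $\phi(s)\le\phi(t)$ (monotonicity of $\phi$ on $(0,1)$), together with $|x'-z'|\ge|x'|/2$ and $|x_i-z_i|\le 2|x'|$, to get
\begin{align*}
0\le P_i(x',t)\le c\,a\,\phi(t)\,|x'|\int_0^t\tau^{-\frac n2-2}e^{-\frac{|x'|^2}{16\tau}}\,d\tau\le c\,a\,\phi(t)\,|x'|^{-n-2}\,t^{\frac12}.
\end{align*}
This is dominated by the $\phi(t)|x'|^{-n-1}$ term. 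Hence, under $\phi\le ct\phi'$ and for $|x'|$ large, it is dominated by the negative term of size $a\,\phi'(t)|x'|^{-n+1}$. With this term added and estimated, your comparison in (i) goes through.
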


The next theorem concerns the existence of a point of boundary layer separation. The crucial observation is that the existence or nonexistence of such a point depends on the sign of the integral \eqref{m-t}. More precisely, it can be stated as follows.

\begin{theorem}\label{maintheorem}
Let $w = (w_1, \dots, w_n)$ be a solution to the system \eqref{Stokes-10} with boundary data $g$ defined by \eqref{0502-6}. For each $1 \leq i \leq n-1$ we have the following.
\begin{itemize}

\item[(i)]
Suppose that $\phi$ satisfies Assumption \ref{assume2}. Then there exist constants $c_{*1}>0$ and $c_{1i}>0$ such that, for any $x'$ satisfying $c_{*1}<|x'|<c_{1i}x_i$, there exists $t_i^*(x')\in(1,t_0)$ such that $(x',0,t_i^*(x'))$ is a point of boundary layer separation for $w_i$. Furthermore, $(x', t_i^*(x'))$ satisfies the condition ${\mathcal P}^i_+$.
  
\item[(ii)]
Suppose that $\phi$ satisfies Assumption \ref{assume3}. Then there exist constants $c_{*2}>0$ and $c_{2i}>0$ such that, for any $x'$ satisfying $c_{*2}<|x'|<c_{2i}x_i$, the velocity component $w_i$ does not exhibit boundary layer separation at any boundary point $(x',0,t)$ for $t\in(0,2)$.

\end{itemize}
\end{theorem}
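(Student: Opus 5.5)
The plan is to work with the boundary formula for the normal derivative of the tangential velocity. The paper refers to it as \eqref{shortg} and derives it from the representation formula \eqref{rep-bvp-stokes-w}. For $1\le i\le n-1$ and boundary data $g=a\psi\phi\,\mathbf e_n$ it should reduce to two terms. The leading one is a spatial kernel acting on $\psi$, times the half-derivative-type quantity $M(t)$ of \eqref{m-t}. The second is a remainder involving $\phi$ directly. For $|x'|>2$ (outside the support of $\psi$) I expect the leading term to behave like
\begin{align*}
D_{x_n}w_i(x',0,t)= a\Big( c_n\frac{x_i}{|x'|^{n+1}}\Big(\int\psi\Big)\, M(t) + R_i(x',t)\Big),
\end{align*}
where $c_n>0$ comes from the Poisson-type kernel. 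The remainder $R_i$ gains either extra decay in $|x'|$ or a Gaussian factor $e^{-|x'|^2/c(t-s)}$ from the heat part. Its bound should be $|R_i|\le C x_i|x'|^{-n-2}$, or at least $o(x_i|x'|^{-n-1})$, uniformly for $t\in(0,2)$; this uses the H\"older regularity $\phi\in C^{1/2+\alpha/2}$ to control the singular $s$-integral.

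The first step is therefore to isolate the constant in front of $M(t)$ and prove the remainder estimate uniformly in $t\in[0,2]$. The condition $|x'|<c\,x_i$ guarantees $x_i\gtrsim|x'|$, so the main term has size $\gtrsim |x'|^{-n}|M(t)|$. Taking $|x'|>c_*$ large then makes $|R_i|$ smaller than half the main term wherever $|M(t)|$ is bounded below.

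For (ii), Assumption \ref{assume3} gives $M(t)>c_0$ on $(0,2)$. Then $D_{x_n}w_i(x',0,t)>0$ for all $t\in(0,2)$, once $c_{*2}$ is large and $c_{2i}$ is small. Since $w_i(x',0,t)=0$ (the tangential data vanish), Taylor expansion in $x_n$ gives $\mathcal P^i_+$ at every time. Hence no sequence of $\mathcal P^i_-$ times exists, and there is no separation point.

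For (i), I will use Assumption \ref{assume2}, which gives $M<0$ on $(t_0,t_1)$. So for $t$ in a compact subinterval, $D_{x_n}w_i(x',0,t)<0$ and $\mathcal P^i_-$ holds.

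I also need positivity on an initial interval. On $(0,1]$, $\phi$ is increasing, so $\phi(t)-\phi(s)\ge0$ for $s<t$ and $M(t)>0$. Near $t=0$, however, $M$ may be small. There I would instead use the exact sign of the full formula for small $t$: for $t\le1$ the whole expression should be positive, because the remainder also has a favorable sign or is dominated by $M(t)\gtrsim \phi(t)t^{-1/2}$.

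I then define $t^*_i(x')=\sup\{t:\ D_{x_n}w_i(x',0,s)>0 \text{ for all } s\in(0,t)\}$. It satisfies $t_i^*\ge1$ and $t_i^*<t_0$, the latter because the derivative is negative at points of $(t_0,t_1)$. Continuity of $D_{x_n}w_i$ in $t$ makes the derivative vanish at $t_i^*$. Points with negative derivative accumulate from the right by the definition of the supremum, possibly after also invoking the negativity on $(t_0,t_1)$. Negative derivative gives $\mathcal P^i_-$ for those times, and positive derivative before $t_i^*$ gives $\mathcal P^i_+$.

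The claim that $(x',t_i^*)$ itself satisfies $\mathcal P^i_+$ needs a second-order argument, since the first derivative vanishes there. I would compute $D_{x_n}^2 w_i(x',0,t)$ from the equation: $w_t=0$ on the boundary gives $D^2_{x_n}w_i=D_{x_i}p$ at $x_n=0$. By Theorem \ref{pressure1103}(iii), $D_{x_i}p>0$ for $t\in(1,2)$, so $w_i>0$ near the boundary at $t_i^*$. This also shows $t_i^*>1$ strictly, once one checks that the derivative is positive at $t=1$.

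The main obstacle is the uniform remainder estimate: showing that the non-$M$ term in \eqref{shortg} is genuinely lower order in $|x'|$ uniformly in $t$, including near $t=0$, where $M(t)\to0$. I would handle small $t$ separately using Gaussian decay of the kernel away from the support of $\psi$.
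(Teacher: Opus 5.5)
Your outline follows the paper's proof. You split \eqref{shortg} into $\pi^{-1/2}R'_i\psi(x')M(t)$ plus the $f_i$-term, get positivity on $(0,1]$ from $M(t)\ge 2\phi(t)t^{-1/2}$ and negativity on $(t_0,t_1)$, take a first time of non-positivity, and upgrade the zero there via $D^2_{x_n}w_i(x',0,t)=D_{x_i}p(x',0,t)>0$ (Theorem \ref{pressure1103}(iii)). However, the choice of $t_i^*$ fails in two places.

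\emph{Right-accumulation of negative times.} With your strict supremum, continuity forces the derivative to vanish at $t_i^*$ itself, so the definition says nothing about times after $t_i^*$. The derivative could touch zero at $t_i^*$ and be positive again just after it. Then no $\mathcal P^i_-$ times accumulate at $t_i^*$ from the right, and negativity on $(t_0,t_1)$ does not fix this. Use instead, as the paper does, $t_i^*=\sup\{r:\ D_{x_n}w_i(x',0,t)\ge 0 \mbox{ for all } t\in(0,r)\}$. Then negative values accumulate from the right by definition. The cost is that your second-order argument must be applied to every zero in $(1,t_i^*]$, not only to $t_i^*$; zeros in $(0,1]$ are excluded by strict positivity there.

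\emph{The bound $t_i^*<t_0$.} Negativity at points of $(t_0,t_1)$ does not give this. Your domination argument only yields negativity on compact subintervals, since $M$ may vanish at $t_0$, so at best you get $t_i^*<t_0+\ep$. What closes it is the sign of the other term. For $x'$ in the cone with $|x'|$ large, $f_i>0$, so $-\pi^{-1/2}\int_0^t\phi(s)(t-s)^{-3/2}f_i(x',t-s)\,ds<0$. This term is unfavorable, contrary to your remark for $t\le 1$. Combined with $M\le 0$ on $[t_0,t_1)$, the derivative is strictly negative on $[t_0,t_1)$, hence $t_i^*<t_0$.

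\emph{The remainder estimate.} Your proposed mechanism is the wrong one. In \eqref{shortg} the non-$M$ term carries $\phi(s)$ undifferenced, so H\"older regularity of $\phi$ cannot tame $(t-s)^{-3/2}$. What does is Lemma \ref{lemma1116-1}: $f_i(x',\tau)=\tau\,\De'R'_i\psi(x')+O(\tau^{3/2}|x'|^{-n-2})$. This comes from cancellation, namely a second-order Taylor expansion of $R'_i\psi$ about $x'$ against the heat kernel, whose first moment vanishes. Gaussian decay away from ${\rm supp}\,\psi$ does not produce it, since $R'_i\psi$ is not compactly supported and the integrand concentrates near $y'=x'$.

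This factor $\tau$ is also exactly what handles $t\to 0$. It bounds the $f_i$-term by $c|x'|^{-n-1}\int_0^t(t-s)^{-1/2}\phi(s)\,ds\le 2c|x'|^{-n-1}t^{1/2}\phi(t)$ on $(0,1)$, using monotonicity of $\phi$. This is beaten by $R'_i\psi(x')M(t)\gtrsim|x'|^{-n+1}\phi(t)t^{-1/2}$, with ratio $\lesssim |x'|^{-2}t$. A uniform-in-$t$ remainder bound cannot be compared with $M(t)$, which tends to $0$ as $t\to 0$.

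A minor slip: $R'_i\psi(x')\approx c\,c_\psi x_i|x'|^{-n}$, not $x_i|x'|^{-n-1}$. So the remainder is smaller than the main term by $|x'|^{-2}$, not $|x'|^{-1}$. Part (ii) is fine as you have it, since there $M>c_0$ and a uniform remainder bound suffices.
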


In the next theorem, under Assumption \ref{assume2}, we investigate the motion of a point of boundary layer separation over time. Roughly speaking, such a point moves away from the origin and reaches infinity in finite time. To be  more precisely, we obtain the following.

\begin{theorem}\label{theorem1221}
Let $\phi \in C^{1 +\frac{\al}2} ([0,2])$ satisfy Assumption \ref{assume2}.  For each $1 \leq i \leq n-1$ we have the following.
\begin{itemize}
\item[(i)]
There exists a constant $c_{*3}>1$ such that, for any $x_1,x_2\in\mathbb{R}$ with $c_{*3}<x_1<x_2$,
\begin{align*}%\label{260105-1}
  t_i^*(x_1{\bf e}_i) < t_i^*(x_2{\bf e}_i),
\end{align*}
where $t_i^*(x_j{\bf e}_i)$ denotes the time at which separation occurs at $x_j{\bf e}_i$, for $j=1,2$, as defined in Theorem \ref{maintheorem}.
 
\item[(ii)]
Let $t_0^*$ be defined by
\begin{align}\label{t^*_1}
t_0^*=\sup\left\{r\in(0,2)\,\middle|\, M(t) >  0 \quad \mbox{for all } t< r \right\}.
\end{align}
Then $t_i^*(x_i{\bf e}_i)<t_0^*$ for all $x_i>c_{*3}$, and
\begin{align*}
\lim_{x_i\to\infty} t_i^*(x_i{\bf e}_i)=t_0^*.
\end{align*}
\end{itemize}
\end{theorem}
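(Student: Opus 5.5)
The plan is to work directly with the boundary formula for the wall shear that Theorem \ref{maintheorem} rests on, namely \eqref{shortg}. For $x'=x_i{\bf e}_i$ with $x_i$ large, i.e.\ far from the support $B'_1$ of $\psi$, I expect it to take the form
\begin{align*}
D_{x_n} w_i(x_i{\bf e}_i,0,t) = a\Bigl( \frac{c_n}{x_i^{\,n}}\, M(t)\int_{\Rn}\psi\,dy' + R(x_i,t)\Bigr),
\end{align*}
with $c_n>0$. Here $M$ comes from the singular half-time derivative of $\phi$ in the first term. The remainder $R$ contains the Poisson-type term and the error from Taylor expanding the kernel in $y'$ around $x'$. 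It should satisfy $|R|\le C x_i^{-n-1}$, or more generally $o(x_i^{-n})$, uniformly for $t\in(0,2)$. The sign of $D_{x_n}w_i$ therefore matches the sign of $M(t)+O(1/x_i)$.

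With the stronger hypothesis $\phi\in C^{1+\frac{\al}2}$, $M$ is continuous on $(0,2)$, and in fact $C^{\al/2}$. We already know $M>0$ on $(0,t_0^*)$ and $M(t_0^*)=0$, and Assumption \ref{assume2} gives $t_0^*\le t_0$.

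\emph{Step 1 (upper bound and limit in (ii)).} The separation time $t^*_i(x_i{\bf e}_i)$ is the first time the sign of $D_{x_n}w_i$ flips, in the sense of Definition \ref{def-blsp}. Before $t_0^*$, the quantity $M$ is bounded below by $\delta(\tau)>0$ on $(0,t_0^*-\tau]$. For $x_i$ large the remainder cannot beat this, so $t_i^*>t_0^*-\tau$, which gives the lower bound in the limit.

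For the upper bound I cannot use the crude remainder estimate, since $R$ could push the zero either way. I need the sign of the correction at $t=t_0^*$, so I will compute $R$ to leading order, $R(x_i,t)= x_i^{-n-2}(\,\cdots)$ or $x_i^{-n-1}(\cdots)$, and show it is negative when $t\in(1,2)$. The argument: on $(1,2)$ we have $\phi'\le 0$, so the Poisson/heat-type term, which behaves like $\phi$ convolved with a positive kernel minus $\phi(t)$, carries a definite sign. With $R<0$ near $t_0^*$, the quantity $D_{x_n}w_i(x_i{\bf e}_i,0,t_0^*)=R<0$, so the zero has already occurred strictly before $t_0^*$. That yields $t_i^*<t_0^*$.

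\emph{Step 2 (monotonicity in (i)).} Set $F(x_i,t)=x_i^{n}D_{x_n}w_i(x_i{\bf e}_i,0,t)/a$, so that $F= c M(t)+x_i^{-1}G(x_i,t)$. Separation times are zeros of $F$ near $t_0^*$. Differentiating the representation in $x_i$, I will show that $\partial_{x_i}F>0$ near $(x_i,t_i^*)$ for $x_i>c_{*3}$: the correction is negative and decays in $x_i$, so $F$ increases. I will also show that $F$ is decreasing in $t$ across its first zero. This is where $\phi\in C^{1+\frac{\al}2}$ enters: it makes $M'$ exist, and I expect $M'(t_0^*)<0$, or at least that $F(x_i,\cdot)$ is negative right after its first zero. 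Then, for $x_1<x_2$, we have $F(x_2,t_i^*(x_1))>F(x_1,t_i^*(x_1))=0$, and $F(x_2,\cdot)>0$ on $(0,t_i^*(x_1)]$ by the positivity from Step 1 combined with monotonicity in $x_i$. Hence $t_i^*(x_2)>t_i^*(x_1)$.

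The main obstacle is the second-order asymptotics of the remainder $G$. I need both its sign for $t\in(1,2)$ and the sign of its $x_i$-derivative, uniformly in $t$ near $t_0^*$. To get these I will write the kernels explicitly and expand $|x'-y'|^{-n-k}$ in $y'$. Terms odd in $y'$ cancel because $\psi$ is radial, so the first correction is of relative order $x_i^{-2}$, and its sign is fixed by explicit constants. For monotonicity I will fall back on comparing $F(x_1,\cdot)$ and $F(x_2,\cdot)$ pointwise rather than differentiating, if the derivative estimates prove awkward.
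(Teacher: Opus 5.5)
Your plan is correct in outline and uses the same quantitative input as the paper: the far-field asymptotics of $f_i$, $D_{x_i}f_i$, $R'_i\psi$ and $D_{x_i}R'_i\psi$ along the $x_i$-axis from Lemma \ref{lemma1116-1}. The argument is organized differently, though.

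For (i) the paper works only at a zero. It uses $D_{x_n}w_i(x_1{\bf e}_i,0,t_i^*)=0$ in \eqref{shortg} to eliminate $M(t_i^*)$. It then gets $D_{x_i}D_{x_n}w_i>0$ there from the positivity of $\frac{D_{x_i}R'_i\psi}{R'_i\psi}f_i-D_{x_i}f_i$ in \eqref{260307-1}, deduces local monotonicity of $t_i^*$, and globalizes by a sup/continuation argument. For (ii) it gets $t_i^*<t_0^*$ directly, and the limit from monotone convergence plus passing to the limit in the zero equation.

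Your global pointwise comparison is, if anything, cleaner, provided you normalize by $R'_i\psi(x_i{\bf e}_i)>0$ instead of $x_i^n$. Note that $\frac{D_{x_i}R'_i\psi}{R'_i\psi}f_i-D_{x_i}f_i=-R'_i\psi\,D_{x_i}(f_i/R'_i\psi)$, so \eqref{260307-1} says exactly that $f_i/R'_i\psi$ is decreasing in $x_i$. Hence the quotient $D_{x_n}w_i(x_i{\bf e}_i,0,t)/R'_i\psi(x_i{\bf e}_i)=\pi^{-\frac12}(M(t)-\int_0^t\phi(s)(t-s)^{-3/2}(f_i/R'_i\psi)(x_i{\bf e}_i,t-s)\,ds)$ is increasing in $x_i$ for every $t\in(0,2)$ at once. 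Your step ``nonnegative on $(0,t_i^*(x_1)]$ at $x_1$ implies positive there at $x_2$'' then closes immediately. You need no local-to-global step and no information about the behavior after the first zero. Your squeeze for the limit (positivity on $(0,t_0^*-\tau]$ plus $t_i^*<t_0^*$) also avoids (i), whereas the paper needs the monotonicity even to have a limit.

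Several details need correcting.
\begin{itemize}
\item The scaling is off. $R'_i\psi(x_i{\bf e}_i)\sim x_i^{-n+1}$, while the $f_i$-term is of order $x_i^{-n-1}\Psi(t)$ with $\Psi(t)=\int_0^t(t-s)^{-1/2}\phi(s)\,ds$. So the relative correction is $O(x_i^{-2})$, as you say at the end. With an $x_i^n$ (or $x_i^{n-1}$) normalization the coefficient of $M(t)$ depends on $x_i$ at the same order as the correction, which is another reason to divide by $R'_i\psi$.
\item In Step 1, $M$ is not bounded below by a positive constant on $(0,t_0^*-\tau]$, since $M(t)\to0$ as $t\to0^+$. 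On $(0,1]$ you must compare $\Psi(t)\le 2t^{1/2}\phi(t)$ with $M(t)\ge 2t^{-1/2}\phi(t)$, as in \eqref{260322-1} and \eqref{260319-1}. Use $M\ge\delta(\tau)>0$ only on $[1,t_0^*-\tau]$.
\item The sign of the correction at $t_0^*$ has nothing to do with $\phi'\le0$, and it needs no second-order expansion. Since $M(t_0^*)=0$, \eqref{shortg} shows that $D_{x_n}w_i(x_i{\bf e}_i,0,t_0^*)$ is a negative multiple of $\int_0^{t_0^*}\phi(s)(t_0^*-s)^{-3/2}f_i(x_i{\bf e}_i,t_0^*-s)\,ds$. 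This integral is positive because $\phi\ge0$ and, by \eqref{260323-5}, $f_i(x_i{\bf e}_i,\tau)\approx\tau\,\De'R'_i\psi(x_i{\bf e}_i)>0$. The sign comes from the spatial Laplacian of $R'_i\psi$ and holds for every $t$.
\item $\phi\in C^{1+\frac{\al}2}$ does not make $M(t)=2\int_0^t\phi'(s)(t-s)^{-1/2}ds$ differentiable; it is only H\"older continuous. Also, $M'(t_0^*)<0$ does not follow from Assumption \ref{assume2}. Drop that step; your comparison argument never uses it.
\end{itemize}
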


In the next theorem, we present another condition in terms of the sign of $\frac{d}{dt}M(t)$, which also implies the occurrence of boundary layer separation for both the tangential and normal velocity components. We show that the normal derivative of the pressure becomes negative after the time at which boundary layer separation occurs for the normal velocity. We also determine the temporal behavior of the corresponding separation point.

We first observe, by integration by parts, if  $ \phi \in C^2([0,2])$ with  $\phi(0) =\phi'(0)  =0$, that
\begin{align}\label{1026-6-1}
\begin{split}
M(t) = \int_{-\infty}^t  \frac{  \phi(t) - \phi(s)  }{(t-s)^\frac32}  ds  
%& =    2\int_{-\infty}^t \big( \phi(t) - \phi(s) \big)  \frac{d}{ds}   (t-s)^{-\frac12}  ds\\
& =   2\int_{0}^t   \phi' (s)     (t-s)^{-\frac12}  ds 
%& =   -4\int_{0}^t   \phi' (s)    \frac{d}{ds} (t-s)^{\frac12}  ds\\
 =    4\int_{0}^t   \phi'' (s)   (t-s)^{\frac12}  ds.
\end{split}
\end{align}
Therefore, it follows via differentiation that   
\begin{align}\label{M-prime}
M'(t) = \frac{d}{dt} M(t) =2 \int_0^t  \phi''(s) (t-s)^{-\frac12}  ds.
\end{align}

We are now ready to state the main result concerning the condition \eqref{M-prime}.
\begin{theorem} \label{theorem260403}
Let $\phi \in C^2([0,2])$ satisfy Assumption \ref{assume1} and $ \phi'(0)  =0$. We assume that $M(2)  < 0$ and  there exists  $ e_0 > 0$   such  that for $ 1 < t < 2$, 
\begin{align}\label{260321-7}
M'(t) < -e_0.
\end{align}
\begin{itemize}
\item[(i)]
There exists $ 2 <  c_{*5}$, for  $c_{*5} < |x'| < c_i x_i $,  $ 1 \leq i \leq n-1$,   $ w_i$ has the unique boundary layer separation $ (x', t^*_i(x'))$ with $ 1 < t^*_i (x') <2$.

\item[(ii)] 
There exists $ 2 <  c_{*6}$, for  $c_{*6} < |x'| $,     $ w_n$ has the unique boundary layer separation $ (x', t^*_n(x'))$  with $ 1 < t^*_n (x') <2$. Moreover,  $D_{x_n} p(x',0,t) > 0$ for $ 0 < t< t_n^* (x')$ and $D_{x_n} p(x', 0,t) < 0$ for $ t_n^* (x') < t< 2$.

\item[(iii)]
There exists $ 0 < c_{*7}$ such that if $ c_* < |x'_1| < |x'_2|$, then
\begin{align*} 
  t_n^*(x'_1  ) < t_n^* (x'_2 ).
\end{align*}

\item[(iv)] Let $t_0^*$ be the number defined in \eqref{t^*_1}. Then,
\begin{align*}
\lim_{|x'| \rightarrow \infty} t_n^* (x') = t_0^*.
\end{align*}
\end{itemize}

\end{theorem}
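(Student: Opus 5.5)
We will work from the boundary representation of the normal derivatives, which (by Lemma \ref{thoe1021-1} and \eqref{shortg}) for boundary data \eqref{0502-6} reduces to two terms. Far from the support ($|x'|>2$, so $\psi(|y'|)$ and its derivatives vanish near $x'$), these terms have the form
\begin{align*}
D_{x_n} w_i(x',0,t) = a\,c_n\,\frac{x_i}{|x'|^{n+1}}\bigl( M(t) + R_i(x',t)\bigr),
\qquad 1\le i\le n-1 .
\end{align*}
Here the remainder $R_i$ comes from the heat-kernel part and from expanding the Riesz-type kernel $\partial_i |x'-y'|^{-n+1}$ around $y'=0$. It satisfies $|R_i|\le C|x'|^{-1}$, and, using $\phi\in C^2$, also $|\partial_t R_i|\le C|x'|^{-1}$, uniformly for $t\in(0,2)$; the condition $|x'|<c_i x_i$ keeps the prefactor $x_i/|x'|$ bounded below. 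For the normal component the analogous leading term is obtained by differentiating the divergence-free relation $D_{x_n} w_n = -\sum_{i<n} D_{x_i} w_i$ to first order. Since $D_{x_n}w_n(x',0,t)=0$ on the boundary for $|x'|>1$, the sign of $w_n$ near the wall is governed by $D^2_{x_n} w_n(x',0,t)$. We will instead use the Stokes equation on the boundary: because $w_n=0$ there, $D_{x_n}p = D_{x_n}^2 w_n$ at $x_n=0$. This links (ii) for $w_n$ directly to the pressure, and we expect $D_{x_n}p(x',0,t)= a c'_n|x'|^{-n}(M(t)+\tilde R(x',t))$ with the same type of error bounds.

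With these expansions, (i) and (ii) follow from a one-dimensional argument. On $(0,1]$, $\phi$ is increasing and $\phi'(0)=0$, so \eqref{1026-6-1} gives $M>0$ there (in fact $M\ge c>0$ on $[\delta,1]$; near $t=0$ we compare the error with $M$ itself using $\phi\in C^2$, which is where care is needed). On $(1,2)$ we have $M'<-e_0$ and $M(2)<0$, so $M$ has a unique zero $t_0^*\in(1,2)$ with a transversal crossing. For $|x'|$ large, $F(t)=M(t)+R_i(x',t)$ satisfies $F'<-e_0/2$ on $(1,2)$, $F>0$ on $(0,1]$, and $F(2)<0$. Hence $F$ has a unique zero $t_i^*(x')\in(1,2)$, positive before it and negative after it. Since $D_{x_n}w_i$ is continuous in $x_n$ near $0$, its sign fixes ${\mathcal P}^i_\pm$, giving existence and uniqueness of the separation point. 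For $w_n$, the same reasoning applied to $D_{x_n}p=D^2_{x_n}w_n$ gives (ii), including the claimed signs of $D_{x_n}p$ before and after $t_n^*$.

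For (iv) we use $|t_n^*(x')-t_0^*|\le |\tilde R|/ (e_0/2)\le C|x'|^{-1}\to 0$. For (iii), differentiate the identity $M(t_n^*)+\tilde R(x',t_n^*)=0$ implicitly in $|x'|$. This requires the next-order term of the expansion: we expect $\tilde R = |x'|^{-2} K(t)+O(|x'|^{-3})$ in the radial variable, with $K$ of definite sign, coming from the second moment of $\psi$ and the heat part. Monotonicity of $t_n^*$ then follows from the sign of $\partial_{|x'|}\tilde R$ divided by $-M'>e_0$. This refined asymptotic is the step we expect to be the main obstacle. The approach we will try first is to write $\tilde R$ explicitly as a radial function, exploiting the radial symmetry of $\psi$, and to show that its radial derivative has a fixed sign dominated by its leading term, uniformly for $t$ near $t_0^*$.
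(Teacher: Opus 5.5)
\textbf{What works.} Your argument for (i), (ii) follows the paper's scheme: positivity for $0<t\le 1$, strict decrease on $(1,2)$ forced by $M'<-e_0$, negativity at $t=2$ from $M(2)<0$, a unique sign change, and $D_{x_n}p=D^2_{x_n}w_n$ on the wall. Two of your steps are lighter than the paper's. First, writing the heat term as $\int_0^t\phi(t-\tau)\tau^{-3/2}f_i(x',\tau)\,d\tau$ and using $\phi(0)=0$ bounds its $t$-derivative by $C\|\phi'\|_\infty|x'|^{-n-1}$, with no need for the double integration by parts of Lemma \ref{theorem1221-2}. Second, your mean value bound $e_0|t_n^*(x')-t_0^*|\le|\tilde R(x',t_n^*(x'))|$ proves (iv) with a rate and independently of (iii); the paper instead derives (iv) from the monotonicity in (iii). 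A minor slip: the prefactor should be $x_i|x'|^{-n}$, since $R'_i\psi$ decays like $|x'|^{-(n-1)}$, and the remainder is in fact $O(|x'|^{-2})$. This is harmless.

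\textbf{The gap is (iii).} You only state an expectation, and the expansion you propose would not suffice. Knowing $\tilde R=|x'|^{-2}K(t)+O(|x'|^{-3})$ says nothing about the sign of $\partial_r\tilde R$, because the $O(|x'|^{-3})$ term carries no derivative control. You need a derivative-level lower bound near $t_n^*$. The paper's tools give it, via the argument of Theorem \ref{theorem1221}(i). Normalize by the exact $A(x')=-\sum_k D_{x_k}R'_k\psi(x')>0$ rather than by $|x'|^{-n}$, set $G=\sum_k D_{x_k}f_k$, and use $D^2_{x_n}w_n(x',0,t_n^*)=0$ to eliminate $M(t_n^*)$:
\[
\partial_r D^2_{x_n}w_n(x',0,t)\big|_{t=t_n^*}=c_n\int_0^{t_n^*}\frac{\phi(s)}{(t_n^*-s)^{3/2}}\Big(\partial_r G-\frac{\partial_r A}{A}\,G\Big)(x',t_n^*-s)\,ds .
\]
Lemma \ref{lemma1116-1} with $|\beta|=1,2$, together with \eqref{260404-2}, \eqref{260403-2} and \eqref{260318-3}, gives
$G=-\frac{3}{\omega_n}c_\psi|x'|^{-n-2}\tau+O(\tau|x'|^{-n-3})$,
$\partial_rG=\frac{3(n+2)}{\omega_n}c_\psi|x'|^{-n-3}\tau+O(\tau|x'|^{-n-4})$, and
$\partial_rA/A=-n|x'|^{-1}+O(|x'|^{-2})$.
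Hence the bracket equals $\frac{6}{\omega_n}c_\psi|x'|^{-n-3}\tau+O(\tau|x'|^{-n-4})$; the coefficient comes from $(n+2)-n=2$. So $\partial_rD^2_{x_n}w_n\ge c|x'|^{-n-3}\Psi(t_n^*)>0$ at the zero. Dividing by $-\partial_tD^2_{x_n}w_n\ge ce_0|x'|^{-n}$ gives $dt_n^*/d|x'|>0$. The sign comes entirely from the heat term. The second moment of $\psi$ plays no role: under your $|x'|^{-n}$ normalization it only enters multiplied by $M(t_n^*)=O(|x'|^{-2})$.

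\textbf{A smaller gap: positivity on $(0,1]$.} Your uniform bound $|\tilde R|\le C|x'|^{-1}$ cannot give positivity there, because $M(t)\to0$ as $t\to0$. Nor is $\phi\in C^2$ the mechanism: a Taylor comparison is inconclusive when, e.g., $\phi''(0)=0$. What works is monotonicity of $\phi$, as in \eqref{260322-1}. With the exact normalization, the remainder is the heat term alone, bounded by $C|x'|^{-2}\Psi(t)$; this uses $|f_i(x',\tau)|\le C\tau|x'|^{-n-1}$, resp.\ $|G|\le C\tau|x'|^{-n-2}$. For $t\le1$ one has $\Psi(t)\le 2t^{1/2}\phi(t)\le t\,M(t)$, because $M(t)\ge 2t^{-1/2}\phi(t)$. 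So the remainder is a relative $O(|x'|^{-2})$ perturbation of $M$ on $(0,1]$.
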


\begin{theorem}\label{maintheorem(NS)}
\begin{itemize}

\item[(i)]
Let $\phi$ satisfy the conditions   (i) of Theorem \ref{pressure1103} and   Theorem \ref{theorem260403}.     
Moreover, we assume that   $ \phi \in C^{ 1 +\frac{\al}2} ( 0, T )$  and 
\begin{align}\label{asumption260410} 
\begin{split}
\int_{0}^t   \phi' (s)     (t-s)^{-\frac12}  ds > c t^{\frac12 +\frac{\al}2}, \quad 0 < t< 1,\quad\mbox{and}\quad
 \min_{t\in (1,2)} (\phi(t) + |\phi'(t)|) > \de_1>0.
 \end{split}
\end{align} 
Let  $ c_{*} = \max(c_{*4}, c_{*5} )$, where $c_{*4}$ and $c_{*5}$ be numbers defined in (i) of   Theorem \ref{pressure1103} and (i) of Theorem \ref{theorem260403}, respectively.
Let $c_i$  be the numbers  defined in (i) of   Theorem \ref{pressure1103} and   (i) of  Theorem \ref{theorem260403}.   Assume that $ c_* < c_{**}$.   Then there exists a constant  $ a_0 ( c_{**} )  >0$ such that for every \(a\in(0,a_0)\), the problem \eqref{nse-30} with boundary data \(ag\) admits a solution
$\displaystyle 
u\in C^{2+\alpha,\,1+\frac{\alpha}{2}}(\mathbb{R}_+^n\times(0,T))$
with corresponding pressure \(\pi\), and the pair \((u,\pi)\) satisfies the following property:
If $ c_{*} < |x'| < c_{**}$  with   $ |x'| < c_i x_i, \, 1 \leq i \leq n-1$, then $ D_{x_i} \pi (x',0,t) < 0$ for $1<t <2$ and  there exists $t_i^* \in (1,  2)$ such that $ (x', 0, t_i^*)$ is a point of boundary layer  separation  of $u_i$.
 
\item[(ii)] 
Let $\phi$ satisfies  Assumption \ref{assume3} and $\eqref{asumption260410}_1$. Let $ c_{*2}$ and $c_{2i}$ be numbers defined in (ii) of Theorem \ref{maintheorem}.  Assume that $ c_* < c_{**}$.  Then there exists a constant $ a_0 ( c_{**} )  >0$ such that, for every \(a\in(0,a_0)\), the problem \eqref{nse-30} with boundary data \(ag\) admits a solution $\displaystyle u \in C^{2 +\al, 1 +\frac{\al}2}(\R_+ \times (0, T))$  with corresponding pressure \(\pi\). Moreover, the pair \((u,\pi)\) has the following property:
If $ c_{*} < |x'| < c_{**}$  and  $ |x'| < c_i x_i, \, 1 \leq i \leq n-1$, then the \(i\)-th tangential component \(u_i\) has no boundary layer separation at any boundary point \((x',0,t)\).

\end{itemize}

\end{theorem}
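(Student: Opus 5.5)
We argue perturbatively, writing the Navier--Stokes solution as the Stokes solution plus a small correction. Let $w$ be the solution of \eqref{Stokes-10} with boundary data $g$ from \eqref{0502-6} and $a=1$, with pressure $p$. For boundary data $ag$ we seek $u=aw+v$, $\pi=ap+q$, where $(v,q)$ solves the Stokes system with zero initial and boundary data and forcing $-\operatorname{div}\big((aw+v)\otimes(aw+v)\big)$.

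\textbf{Step 1 (existence and smallness).} Since $\phi\in C^{1+\frac{\al}2}$ and $\psi\in C^\infty_c$, the boundary data lie in $C^{2+\al,1+\frac{\al}2}$. Hence $w\in C^{2+\al,1+\frac{\al}{2}}(\mathbb R^n_+\times(0,T))$, and we also have decay of $w,\na w$ away from $B_1'$. We will set up a contraction for $v$ in the space $X=C^{2+\al,1+\frac\al2}$, using Schauder estimates for the Stokes system in the half-space with zero data, if necessary in a weighted version. The quadratic structure gives
\[
\|v\|_X+\|\na q\|_{C^{\al,\frac\al2}}\le C a^2 .
\]

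\textbf{Step 2 (transfer of the sign conditions).} On the compact set $K=\{c_*\le|x'|\le c_{**}\}\times[0,2]$ we need quantitative, uniform versions of the Stokes results.

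For part (i) the relevant quantities are $D_{x_i}p(x',0,t)$ and $D_{x_n}w_i(x',0,t)$. From (the proof of) Theorem \ref{pressure1103} and Theorem \ref{theorem260403}, we want explicit lower bounds on these, with the correct sign, on compact subsets of $K$. Here \eqref{asumption260410} is the key input.
\begin{itemize}
\item[$\bullet$] The bound $\int_0^t\phi'(s)(t-s)^{-1/2}ds>ct^{\frac12+\frac\al2}$ gives $D_{x_n}w_i(x',0,t)\ge c\,t^{\frac12+\frac\al2}$ for small $t$, through the representation \eqref{shortg}, in which $M(t)$ appears. This beats the perturbation near $t=0$: since $v$ vanishes at $t=0$ in $C^{1+\al,\frac{1+\al}2}$, we have $|D_{x_n}v(x',0,t)|\le Ca^2t^{\frac{1+\al}2}$.
\item[$\bullet$] The condition $\phi+|\phi'|>\de_1$ on $(1,2)$ gives $|D_{x_i}p|\ge c\de_1$ on $K\cap\{1<t<2\}$, in the region $|x'|<c_ix_i$.
\end{itemize}
Then, for $a<a_0(c_{**})$, the inequalities $aD_{x_i}p>Ca^2$ and $a\,D_{x_n}w_i>Ca^2t^{\frac{1+\al}2}$ persist. (We adopt the sign convention of the statement for $D_{x_i}\pi$ as determined by the Stokes computation.) Near the Stokes separation time $t_i^*\in(1,2)$, $D_{x_n}w_i(x',0,\cdot)$ becomes negative on a subinterval where $M<0$, with a quantitative bound there. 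The intermediate value theorem, applied to $t\mapsto D_{x_n}u_i(x',0,t)$, then yields a first zero $t_i^*\in(1,2)$ followed by negative values. Since $u_i(x',0,t)=0$, the sign of $D_{x_n}u_i$ gives the conditions ${\mathcal P}^i_\pm$, and hence a separation point.

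For part (ii), Assumption \ref{assume3} gives $D_{x_n}w_i(x',0,t)\ge c\min(t^{\frac12+\frac\al2},1)$ on all of $K$, which again dominates $Ca^2t^{\frac{1+\al}2}$. Thus $D_{x_n}u_i>0$ for all $t\in(0,2)$ and no separation occurs.

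\textbf{Main obstacle.} The hardest step is the behavior near $t=0$, and near any time where the Stokes quantity is small. There we must control the perturbation with a matching power of $t$ and uniformly in $x'\in K$. This requires a H\"older-in-time estimate for $\na v$ on the boundary that vanishes at $t=0$, together with an explicit quantitative lower bound extracted from the proofs of the earlier theorems, rather than only their qualitative conclusions. I would first derive the formula $D_{x_n}w_i(x',0,t)=c\,x_i|x'|^{-n-1}M(t)+O(|x'|^{-n-2})\cdot(\text{similar integrals})$ for $|x'|$ large. Combining this with \eqref{asumption260410}, I would get two-sided bounds whose constants depend only on $c_{**}$, and this is what determines $a_0(c_{**})$.
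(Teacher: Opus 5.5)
Your plan follows the paper's proof closely. The one real gap is in the last step. The pieces that match the paper are:
\begin{itemize}
\item $u=aw+v$, with $v$ obtained by a contraction in $C^{2+\al,1+\frac\al2}$ (Proposition \ref{theorem1118-3});
\item $|D_{x_n}v(x',0,t)|\lesssim a^2t^{\frac12+\frac\al2}$ from $D_xv(\cdot,0)=0$, played against the lower bound \eqref{260410-4} that $\eqref{asumption260410}_1$ gives through \eqref{shortg};
\item the pressure lower bound \eqref{pressure260410}, made uniform by $\eqref{asumption260410}_2$;
\item negativity near $t=2$ from $M<0$;
\item constants uniform on $c_*<|x'|<c_{**}$, which is what fixes $a_0(c_{**})$.
\end{itemize}

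Two smaller corrections. First, the expansion you list as the main obstacle is already available as \eqref{shortg} with \eqref{0106-2}. Your exponents are each one too small: the $M$-term carries $\pi^{-1/2}R'_i\psi(x')\approx c\,x_i|x'|^{-n}$, and the remainder is of order $|x'|^{-n-1}\Psi(t)$. This is harmless, since only the relative factor $|x'|^{-2}$ matters. Second, state the pressure bound with its sign rather than as a bound on $|D_{x_i}p|$. Since $\phi\ge0\ge\phi'$ on $(1,2)$, \eqref{pressure260410} gives $D_{x_i}p(x',0,t)\ge c\,c_{**}^{-n-1}\de_1>0$. So what is actually proved, by you and by the paper in \eqref{260417-2}, is $D_{x_i}\pi>0$ on $(1,2)$, consistent with Theorem \ref{pressure1103}(iii). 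The ``$<0$'' in the statement contradicts both, so do not hedge on it.

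\textbf{The gap.} The intermediate value theorem gives a zero of $t\mapsto D_{x_n}u_i(x',0,t)$ in $(1,2)$. It does not give ``a first zero followed by negative values''. $D_{x_n}v$ is only known to be $C^{\frac12+\frac\al2}$ in time, so even though the Stokes part is strictly decreasing, nothing in your estimates rules out a tangential zero before the sign change. Either choice of $t_i^*$ then breaks part of Definition \ref{def-blsp}:
\begin{itemize}
\item At such a first zero, condition (ii) of Definition \ref{def-blsp} fails.
\item If instead you set $t_i^*=\sup\{r:\ D_{x_n}u_i(x',0,t)\ge0\ \text{for all } 0<t<r\}$, then (ii) holds. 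But (i) requires ${\mathcal P}^i_+$ at every earlier zero, and there the sign of $D_{x_n}u_i$ tells you nothing.
\end{itemize}

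The missing link is the pressure sign you already proved but never used here.
\begin{itemize}
\item For $|x'|>1$ the boundary data vanish near $x'$. So $u$, $\partial_tu$, $\De'u$ and $(u\cdot\na)u$ all vanish at $(x',0,t)$, and the equation gives $D^2_{x_n}u_i(x',0,t)=D_{x_i}\pi(x',0,t)>0$ for $1<t<2$.
\item Hence $u_i>0$ for small $x_n$ at any zero in $(1,t_i^*)$.
\item On $(0,1]$ your lower bound, extended to $t=1$ by continuity, gives $D_{x_n}u_i>0$. This also gives $t_i^*>1$.
\end{itemize}
This is the argument from the proof of Theorem \ref{maintheorem}(i), which the paper invokes here.

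The paper additionally asserts $\frac{d}{dt}D_{x_n}u_i<0$ on $(1,2)$, in \eqref{260417-1}, using Lemma \ref{theorem1221-2} and $M'<-e_0$. That would make the crossing unique. However, it needs a bound on $\partial_tD_{x_n}v$, which the $C^{2+\al,1+\frac\al2}$ estimate \eqref{251209-7} does not control. So the pressure-sign route is the one to rely on.
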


The paper is organized as follows.
Section~\ref{SS-half} introduces the notation used throughout the paper, recalls
the solution formula, and presents a decomposition of the solution.
In Section~\ref{SS-derivative}, we derive decomposition formulas for the normal
derivatives of the tangential velocity components and for the pressure on the
boundary.
Section~\ref{SS-thms} is devoted to the proofs of the main theorems for the
Stokes system, while Section~\ref{NS-thm} provides the proof for the
Navier--Stokes equations. Technical lemmas are proved in the Appendix.

%%%%%%%%%%%%%%%%%%%%%%%%%%%%%%%%%%%%%%%%%%%%%%%%%
%%%%%%%%%%%%%%%%%%%%%%%%%%%%%%%%%%%%%%%%%%%%%%%%%
%%%%%%%%%%%%%%%%%%%%%%%%%%%%%%%%%%%%%%%%%%%%%%%%%

\section{Preliminaries }\label{SS-half}
\setcounter{equation}{0}
We denote points in $\mathbb{R}^{n-1}$ by $x'$ and points in the half-space $\mathbb{R}^n_+$ by $x = (x', x_n)$. For a multi-index $\beta$ and a non-negative integer $m$, partial derivatives are denoted by
$\displaystyle D^\be_x D^m_t = \frac{\partial^{|\be|}}{\partial x^{|\be|}} \frac{\partial^m}{\partial t^m}$ 
for a multi-index $\be$ and a non-negative integer $m$. 
Throughout this paper, we define the space-temporal domains  $Q(T) = \mathbb{R} \times (0, T)$, $Q_+(T) = \mathbb{R}^n_+ \times (0, T)$, and $q(T) = \mathbb{R}^{n-1} \times (0, T)$. Furthermore, $c$, $c_i$, and $d_i$ represent generic positive constants depending only on the dimension $n$, whose values may change from line to line.  In particular, $c_*$ denotes a sufficiently large positive constant depending only on $n$.

We define the anisotropic  Holder continuous  spaces  $C^{ k + \al,  \frac{k}2 + \frac{\al}2} ( Q_+(T) )$ for $0 <\al <1$ and $ k \in \{0, \,  1, \, 2 \}$  by
\begin{align*}
\| f \|_{ C^{\al, \frac{\al}2} ( Q_+(T) )} :  &  = \| f \|_{L^\infty(0, T: C^\al (\R_+))  } + \| f \|_{ L^\infty (\R_+; C^{\frac{\al}2} (0, T)) },\\
\| f \|_{ C^{ 1 + \al,  \frac12 + \frac{\al}2} ( Q_+(T) )} :  &  = \|D_x  f \|_{C^{\al, \frac{\al}2} ( Q_+(T) )} + \| f \|_{ L^\infty (\R_+; C^{\frac12 + \frac{\al}2} (0, T))},\\
\| f \|_{ C^{ 2 + \al,  1+ \frac{\al}2} ( Q_+(T) )} :  &  =  \|   f \|_{C^{1 + \al,  \frac12 + \frac{\al}2} ( Q_+(T) )}  + \|D^2_x  f \|_{C^{\al, \frac{\al}2} ( Q_+(T) ) } + \| D_t f\|_{C^{\al, \frac{\al}2} ( Q_+(T) )}.
\end{align*}
The spaces  $C^{ k + \al,  \frac{k}2 + \frac{\al}2} ( q(T) )$ 
 are defined analogously.

Let $\Ga$ and $N$ be  the $n $-dimensional Gaussian kernel  and  Newtonian kernel, respectively, defined by
\begin{align*}%\label{H-L-10}
\Ga(x,t) = \left\{ \begin{array}{cc}\vspace{2mm}
\frac1{(4\pi t)^{\frac{n}2}} e^{-\frac{ |x|^2}{4t}}& \quad t > 0,\\
0 &\quad t < 0,
\end{array}
\right.
\quad N(x) = \left\{\begin{array}{ll}\vspace{2mm}
-\frac1{n(n -2)\om_n} |x|^{-n +2} &\quad n \geq 3,\\
\frac1{2\pi} \ln |x| &  \quad n =2,
\end{array}
\right.
\end{align*}
where \(\omega_n\) is the volume of the \(n\)-dimensional unit ball. We denote that $\Ga'$  is  $n-1 $-dimensional Gaussian kernel.

% Denote the integral
%\begin{align}
%    L_i(x,t):=\int_0^{x_n}\int_{\Sigma}\partial_{z_n}\Gamma(z,t)\partial_{x_i}N(x-z)dz
%\end{align}
%so that \(L_{ij}=\partial_{x_j}L_i\).
%We define the following spaces
%\begin{enumerate}
%    \item \(\Sigma:=\mathbb{R}^{n-1}\)
%    \item \(\Sigma_{(k)}:=\left\{x_{(k)}'':=(x_1,\cdots,x_{k-1},x_{k+1},\cdots, x_{n-1})\big| x_i\in \mathbb{R}\right\}\) for \(k=1,\cdots, n-1\).
%\end{enumerate}
%We will frequently use the following properties of the kernel \(L_i\):
%\begin{lemm}\label{L_i-properties}
%    Let \(1\leq i \leq n-1\) and \(1\leq j \neq i \leq n-1\).
%    \begin{enumerate}
%        \item \(L_i\) is odd in \(x_i\) and even in \(x_j\).
%        \item \(L_i\) is negative for \(x_i>0\) and is increasing in \(x_j\) for \(x_i, x_j>0\).
%    \end{enumerate}
%\end{lemm}
% The proof of Lemma \ref{L_i-properties} is in Appendix \ref{proofoflemma_L_i-properties}.
 
It is known that the solution  $w$  to the system \eqref{Stokes-10} with boundary data $g$  is represented by
\begin{align}\label{rep-bvp-stokes-w}
w_i(x,t) & = \sum_{j=1}^{n}\int_0^t \int_{\Rn} K_{ij}(
x^{\prime}-y^{\prime},x_n,t-s)g_j(y^{\prime},s) dy^{\prime}ds,
\end{align}
where  the Poisson kernel $K_{ij}$ of the Stokes equations in $Q_+ (\infty)$ is  given by   (see \cite{So}):
\begin{align}\label{Poisson-tensor-K}
\begin{split}
  K_{ij}(x,t) &  =  -2 \delta_{ij} D_{x_n}\Ga(x,t)
+ L_{ij} (x,t)  +2  \de_{jn} \de(t)  D_{x_i} N(x), \quad
i,j=1,2,\cdots, n,
\end{split}
\end{align}
with
\begin{align*}%\label{L-tensor}
L_{ij} (x,t) & = 4  D_{x_j}\int_0^{x_n} \int_{\Rn}   D_{z_n}
\Ga(z,t)  D_{x_i}   N(x-z)  dz.
\end{align*}
The associated pressure $p$ is  given by
\begin{align}\label{representationpressure}
\begin{split}
p(x,t) = & -2  \sum_{j =1}^{n}\int_{\Rn} D_{x_j} D_{x_n} N(x' -y', x_n)    g_j(y', t) dy'   -2  D_t N*'  g_n (x,t) \\
& +    4\sum_{j =1}^{n}
 (D_t-\De') \int_0^t  \int_{\Rn} g_j (z',s) \int_{\Rn} D_{y_j} N(y', x_n) \Ga(x'-y' -z',0,t-s) dy' dz'ds.                                                                                                                                                                                                                                                                                                                                                                                                                                                                                                                                                                                                                                                                                                                                                                                                                                                                                                                                                                                                                                                                                                                                                                                                                                                                        
\end{split}
\end{align}
We also recall (see \cite{So})
\begin{align}\label{1006-3}
L_{ij} = L_{ji},\quad 1 \leq i,j \leq n-1,  \qquad
  L_{in} =
L_{ni}  + B_{in},\quad  i \neq n,
\end{align}
%where $ B_{in}$ defined as
\begin{equation*}%\label{B-tensor}
B_{in}(x,t) := 4p. v.\int_{\Rn} D_{x_n} \Ga(x^\prime -z^\prime ,
x_n, t)  D_{z_i} N( z^{\prime},0) dz^\prime =2R'_i (D_{x_n}\Gamma (\cdot, x_n, t))(x'),
\end{equation*}
where $ R' = (R'_1, \cdots, R'_{n-1} )$ denotes the $n-1$-dimensional Riesz transform. 
Based on     \eqref{rep-bvp-stokes-w}, \eqref{Poisson-tensor-K}  and  $\eqref{1006-3}_2$, the solution  $w = (w_1, \cdots, w_n)$ is represented as follows:
\begin{align}\label{repre0926}
\begin{split}
w_i(x,t) &  = w_i^G (x,t) +  \sum_{1 \leq j \leq n-1}  w_{ij}(x,t)  + w^{L}_{i} (x,t) + w_i^B(x,t) + w_i^N(x,t),\quad 1 \leq i \leq n-1.
\end{split}
\end{align}
Here  the components in \eqref{repre0926} are defined by
\begin{align}\label{260409-1}
\begin{split}
w_i^G(x,t)& = -2 \int_0^t \int_{\Rn} D_{x_n} \Ga(x' -y', x_n, t-s) g_i(y',s) dy'ds,\\
w_{ij} (x,t) & = \int_0^t \int_{\Rn} L_{ij} (x'-y', x_n, t-s) g_j(y',s)  dy' ds, \quad  1 \leq  i, \, j \leq n,\\
 w^{L}_{i} (x,t) & =  \int_0^t \int_{\Rn} L_{ni} (x'-y', x_n, t-s) g_n(y',s)  dy' ds,\\
% w^L_{jj}(x,t) & =   \int_0^t \int_{\Rn} L_{jj} (x'-y', x_n, t-s) g_n(y',s)  dy' ds,\\
w^B_i (x,t) &  =\int_0^t \int_{\Rn}  B_{in}(x' -y', x_n, t-s) g_n(y',s) dy'ds, \\
w^N_i (x,t)  &= 2   \int_{\Rn} D_{x_i} N(x'-y', x_n)  g_n (y',t) dy'.
\end{split}
\end{align}
Finally, for    $1 \leq i \leq n-1$, we note that  
\begin{align}\label{NB0926}
\begin{split}
w^N_i (x,t)  &=  2\int_{\Rn} D_{x_n} N(x'-y', x_n) R'_i g_n (y',t) dy', \\
w^B_i (x,t) &  =2 \int_0^t \int_{\Rn} D_{x_n} \Ga(x' -y', x_n, t-s) R'_ig_n(y',s) dy'ds.
\end{split}
\end{align}

%%%%%%%%%%%%%%%%%%%%%%%%%%%%%%%%%%%%%%%%
%%%%%%%%%%%%%%%%%%%%%%%%%%%%%%%%%%%%%%%%
%%%%%%%%%%%%%%%%%%%%%%%%%%%%%%%%%%%%%%%%

\section{Estimates of  $  D_{x_n} w_i $,  $ i \neq n  $  and $p$ at the boundary}\label{SS-derivative}
\setcounter{equation}{0}

Next proosition is prepared to the expression of $  D_{x_n} w_i $,  $ i \neq n  $  and $p$ at the boundary.

 \begin{lemma}\label{thoe1021-1} 
Let $(w,p)$ be the  solution to system \eqref{Stokes-10} with boundary data $g$.   For $i \neq n$,   
 \begin{align}\label{normal}
\begin{split}
D_{x_n }w_i (x',0,t) &  = \frac{1}{2 \pi^ \frac12}\int_0^t  \frac1{(t -s)^\frac32}    \int_{\Rn}   \Ga' (x' -y', t-s) \big(  g_i(y',s)   -   g_i(x',s)    \big)  dy' ds\\
&\quad +\frac{1}{ 2\pi^ \frac12}  \int_{-\infty}^t     \frac1{(t -s)^\frac32}   \int_{\Rn}  \Ga' (x' -y', t-s) 
\big(  g_i(x',s)   -  g_i(x',t)    \big)    dy' ds\\
&\quad   -2  \sum_{1 \leq j \leq n-1}  D_{x_i}R'_j  g_j (x',t) \\
& \quad -\frac1{2\pi^\frac12}\int_0^t  \frac1{(t -s)^\frac32}    \int_{\Rn}   \Ga' (x' -y', t-s) \big( R'_i g_n(y',s)   - R'_i g_n(x',s)    \big)  dy' ds\\
&\quad -  \frac1{2\pi^\frac12} \int_{-\infty}^t     \frac1{(t -s)^\frac32}   \int_{\Rn}  \Ga' (x' -y', t-s) 
\big( R'_i g_n(x',s)   - R'_i g_n(x',t)    \big)    dy' ds.
\end{split}
\end{align}
\begin{align}\label{representationpressure-1}
\begin{split}
p(x,t) 
& = -4 \sum_{j =1}^{n} D_{x_j} D_{x_n} \int_{\Rn} N(x' -y', x_n) g_j(y', t) dy' - 2 D_t N*' g_n (x,t) \\
& \quad - 2\sum_{j =1}^{n} \int_{-\infty}^t \int_{\Rn}\frac{g_j (z',s) -g_j(z',t)}{t -s} 
  \int_{\Rn} D_{y_j} N(y', x_n) \Ga(x'-y' -z',0,t-s) dy' dz' ds. 
\end{split}
\end{align}
\end{lemma}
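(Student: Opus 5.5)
The plan is to start from the decomposition \eqref{repre0926}, differentiate each of its five pieces in $x_n$, and let $x_n\to0$.

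\emph{Heat-kernel pieces $w_i^G$ and $w_i^B$.} By \eqref{NB0926}, both have the form $\mp2\int_0^t\int D_{x_n}\Gamma(x'-y',x_n,t-s)h(y',s)\,dy'ds$, with $h=g_i$ for $w_i^G$ and $h=R'_ig_n$ for $w_i^B$. We factor
\[
\Gamma(x,t)=\Gamma'(x',t)\,(4\pi t)^{-1/2}e^{-x_n^2/4t},
\]
so that $D_{x_n}^2$ of the one-dimensional factor equals $D_t$ of it. The steps are then:
\begin{itemize}
\item[(a)] Subtract and add $h(x',s)$, and then $h(x',t)$. This leaves three parts: a spatial difference $h(y',s)-h(x',s)$, a temporal difference $h(x',s)-h(x',t)$, and a pure term $h(x',t)\int_0^t\int D_{x_n}\Gamma\,dy'ds$.
\item[(b)] For the difference parts, differentiate once more in $x_n$ and let $x_n\to0$. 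Since $D^2_{x_n}$ of the one-dimensional kernel at $x_n=0$ equals $-\tfrac{1}{2}\pi^{-1/2}(t-s)^{-3/2}$ up to the factor, this gives the two kernels $\tfrac{1}{2\pi^{1/2}}(t-s)^{-3/2}\Gamma'$. The factor $2$ and the sign account for the coefficients $\pm\tfrac{1}{2\pi^{1/2}}$ in \eqref{normal}.
\item[(c)] Extend the temporal integral to $-\infty$, using $g(\cdot,s)=0$ for $s<0$. The pure term is then absorbed into the lower limit: $\int_{-\infty}^0(t-s)^{-3/2}ds\,h(x',t)$ compensates for the boundary contribution coming from $D_{x_n}$ of $\int_0^t\Gamma$.
\end{itemize}
Justifying the limit under the integral uses the Hölder regularity $g\in C^{1+\alpha,\frac12+\frac\alpha2}$. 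With it, the singularity $(t-s)^{-3/2}$ is integrable against $|y'-x'|^{1+\alpha}$ or $|s-t|^{\frac12+\frac\alpha2}$, by dominated convergence.

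\emph{Pieces $w_{ij}$, $w_i^L$ and $w_i^N$.} By \eqref{NB0926}, $w_i^N=2\int D_{x_n}N\,R'_ig_n$ is a harmonic Poisson extension of $R'_ig_n(\cdot,t)$. Its $x_n$-derivative at the boundary is the Dirichlet-to-Neumann map $-|\nabla'|R'_ig_n=-(-\Delta')^{1/2}R'_ig_n$, i.e. a term of the form $D_{x_i}\cdot$(Riesz)$\,g_n$.

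For $w_{ij}$ with $j\le n-1$, use $L_{ij}=4D_{x_j}\int_0^{x_n}\int D_{z_n}\Gamma\,D_{x_i}N$. The $x_n$-derivative at $x_n=0$ leaves $4D_{x_j}\int D_{z_n}\Gamma(z',0,t)D_{x_i}N(x'-z',0)\,dz'$. After convolving in time with $g_j$, this combines to produce $-2D_{x_i}R'_jg_j(x',t)$, through the identity $\int_0^\infty D_{x_n}\Gamma\,ds$-type Poisson relations.

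For $w_i^L$, the symmetry \eqref{1006-3} is used. The contribution of $L_{ni}$ against $g_n$ should cancel exactly with the $w_i^N$ Dirichlet-to-Neumann term, leaving the two $R'_ig_n$ lines in \eqref{normal}.

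\emph{Pressure.} For \eqref{representationpressure-1}, start from \eqref{representationpressure}. Write $D_t\int_0^t F(t-s)g(s)\,ds$, where $F=\int D_{y_j}N\,\Gamma$. Move $D_t-\Delta'$ onto $\Gamma$: since $\Gamma(\cdot,0,\cdot)$ in $n$ dimensions satisfies $(D_t-\Delta')\Gamma=\Gamma_{x_nx_n}|_{x_n=0}=-\frac{1}{2(t-s)}\Gamma$, this yields the kernel $\frac{1}{t-s}$ times $-2$.

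The time-difference form $g(s)-g(t)$ comes from the boundary term at $s=t$. The delta-type contribution, namely $\Gamma(\cdot,t-s)\to\delta$ as $s\to t$, produces $g_j(\cdot,t)\int D_{y_j}N$. Combined with the first term of \eqref{representationpressure}, this changes its coefficient from $-2$ to $-4$. The $s<0$ extension is again justified because $g$ vanishes there.

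\emph{Main obstacle.} I expect the hardest step to be the exact bookkeeping of the $L_{ij}$, $L_{ni}$ and $N$ contributions, so that they reduce to $-2\sum_j D_{x_i}R'_jg_j$ with the $g_n$ local terms cancelling. I would verify this on the Fourier side in $x'$, where $L_{ij}$ and the Poisson kernels have explicit symbols $e^{-|\xi'|x_n}$ and $e^{-x_n\sqrt{|\xi'|^2+\lambda}}$.
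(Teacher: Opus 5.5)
\textbf{What already works.} Your treatment of $w_i^G$, $w_i^B$ and $w_i^N$ is sound and is the paper's argument in different packaging. The paper uses $2\int_{-\infty}^t\int_{\Rn}D_{x_n}\Ga\,dy'ds=-1$ to write $w_i^B+R'_ig_n(x',t)$ as an integral of differences, then takes the difference quotient in $x_n$. The $x_n$-derivative of your ``pure term'' is exactly the $s<0$ part of the temporal-difference line. One small correction: $D_{x_n}^2\big[(4\pi\tau)^{-1/2}e^{-x_n^2/4\tau}\big]$ at $x_n=0$ is $-\frac14\pi^{-1/2}\tau^{-3/2}$, and that is what produces $\pm\frac{1}{2\pi^{1/2}}$. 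The genuine gaps are in the $L$-terms and in the pressure.

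\textbf{The $L$-terms.} You flag these as the crux but do not handle them.

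For $w_{ij}$, the expression you write, $4D_{x_j}\int_{\Rn}D_{z_n}\Ga(z',0,t)D_{x_i}N(x'-z',0)\,dz'$, is identically zero, because $D_{z_n}\Ga(z',0,t)=0$ for $t>0$. The boundary term survives only if you convolve in time first. Write $w_{ij}=4D_{x_i}D_{x_j}\int_0^{x_n}\int_{\Rn}N(x-z)k_j(z,t)\,dz$ with $k_j=\int_0^t\int_{\Rn}D_{x_n}\Ga\,g_j\,dy'ds$. Then $D_{x_n}w_{ij}=2D_{x_i}R'_jk_j(\cdot,x_n,t)$ plus an integral over $(0,x_n)$. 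That integral vanishes once the tangential derivatives are moved onto $k_j$. The nonzero limit comes from the double-layer jump $k_j(\cdot,x_n,t)\to-\frac12 g_j(\cdot,t)$.

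For $w_i^L$ you give no mechanism at all. Since $R'_i=2D_{x_i}(N(\cdot,0)*')$, one has $-(-\De')^{1/2}R'_i=D_{x_i}$. Hence $D_{x_n}w_i^N(x',0,t)=D_{x_i}g_n(x',t)$ is a \emph{local} term, not a Riesz term, and what must be shown is $D_{x_n}w_i^L(x',0,t)=-D_{x_i}g_n(x',t)$. The paper obtains this from $L_{ni}=L_{in}-B_{in}$ and \eqref{lapl}, which give \eqref{260409-2}:
\[
D_{x_n}w_i^L=2D_{x_i}k_n-4D_{x_i}\De'\int_0^{x_n}\int_{\Rn}N(x-z)k_n(z,t)\,dz.
\]
The last term vanishes at $x_n=0$ by the decay bound \eqref{260227-2}. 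That bound is needed because $N(\cdot,0)$ is not integrable on $\Rn$.

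Your Fourier--Laplace fallback is a legitimate alternative that would settle all of this at once. With $\widehat{\partial_kf}=i\xi_k\hat f$ and $\rho=\sqrt{|\xi'|^2+\la}$, solving the resolvent problem explicitly gives
\[
\partial_{x_n}\widehat{w}'|_{x_n=0}=-\rho\,\widehat{g}'-\frac{\rho}{|\xi'|}\,i\xi'\,\widehat{g_n}-\frac{\xi'(\xi'\cdot\widehat{g}')}{|\xi'|}.
\]
Here $-\rho$ is the symbol of the $(t-s)^{-3/2}$ operator in the first two lines of \eqref{normal}, and $-i\xi_i/|\xi'|$ is the symbol of $R'_i$. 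As written, however, the proposal only announces this computation.

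\textbf{The pressure.} Here the mechanism you describe is wrong. On $\{x_n=0\}$ the $n$-dimensional kernel is $\Ga(x',0,\tau)=(4\pi\tau)^{-1/2}\Ga'(x',\tau)$, which is not an approximate identity. So the boundary term at $s=t$ in $D_t\int_0^tF(t-s)g_j(s)\,ds$ is infinite, since $F(\tau)$ grows like $(4\pi\tau)^{-1/2}$. After moving $D_t-\De'$ onto $\Ga$, the kernel $-\frac1{2(t-s)}\Ga$ is also not integrable against $g_j(s)$ at $s=t$. These two divergences cancel each other; they leave no finite term ``$g_j(\cdot,t)$ against $D_{y_j}N$''. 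Such a term would in any case have one derivative fewer than $D_{x_j}D_{x_n}N*'g_j$, so it could not turn $-2$ into $-4$.

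The missing idea is to subtract $g_j(z',t)$ \emph{before} differentiating. By $\int_0^\infty\Ga(x,\tau)\,d\tau=-N(x)$ (cf. \eqref{1027-1}$_1$), the $g_j(z',t)$ part becomes
\[
-g_j(z',t)\int_{\Rn}D_{y_j}N(y',x_n)N(x'-y'-z',0)\,dy'.
\]
The two resulting $D_tg_j$ terms cancel. What remains is $-\De'$ acting on this term, which you evaluate with \eqref{1027-1}$_2$ in the form $\De'\int_{\Rn}N(y',x_n)D_{x_j}N(x'-y'-z',0)\,dy'=-\frac12D_{x_n}D_{x_j}N(x'-z',x_n)$. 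With the prefactor $4$ this is exactly the extra $-2D_{x_j}D_{x_n}N*'g_j$; see \eqref{0224-1}--\eqref{p-est}.

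\textbf{The constant in the third line.} Carried out correctly, the $w_{ij}$ step gives $2D_{x_i}R'_jk_j\to-D_{x_i}R'_jg_j$. The symbol above agrees: $-\xi_i\xi_j/|\xi'|$ is the symbol of $-D_{x_i}R'_j$. So you will not obtain $-2\sum_jD_{x_i}R'_jg_j$. The factor $2$ in \eqref{normal} appears to be a slip in the paper: its proof writes $2D_{x_i}R'_jk_j$ and then does not apply $k_j\to-\frac12 g_j$. The slip is harmless for the paper, since $g'=0$ in all its applications.
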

 \begin{proof}
The proof of Lemma \ref{thoe1021-1} is provided   in  Subsection \ref{prooflemma1} of Appendix.
 \end{proof}
 
In case that $g = g_n {\bf e}_n$ with \eqref{0502-6}, from \eqref{normal}, we have
 \begin{align} \label{shortg}
D_{x_n} w_i (x', 0,t) & =  - \frac1{\pi^\frac12}  \int_0^t  \frac{ \phi(s)}{(t-s)^\frac32}     f_i (x', t-s) ds    + \frac1{\pi^\frac12}  R'_i \psi(x) \int_{-\infty}^t     \frac{\phi(t) - \phi(s)}{(t-s)^\frac32}  ds, \quad  1 \leq i \leq n-1, 
\end{align}
where 
\begin{align}\label{defoff}
  f_i (x', t-s)  = \int_{\Rn}   \Ga'(x' -y', t-s) \big( R'_i \psi(y')   - R'_i \psi(x')    \big)  dy'.
\end{align}
 
Similary it follows from \eqref{representationpressure-1} that
\begin{align}\label{260321-1}
\begin{split}
p(x,t) 
& = -4 D_{x_n} D_{x_n}\int_{\Rn} N(x' -y', x_n) g_n (y', t) dy' - 2 D_t N*' g_n (x,t) \\
& \quad - \frac{1}{\sqrt{ \pi}} \int_{-\infty}^t \int_{\Rn}\frac{g_n (z',s) -g_n(z',t)}{(t -s)^{\frac{3}{2}}} 
  \int_{\Rn} D_{x_n} N(y', x_n) \Ga'(x'-y' -z',t-s) dy' dz' ds.
\end{split}
\end{align}

In the next lemma, we estimate the tangential derivatives of $f_i$ in \eqref{defoff}.

\begin{lemma}\label{lemma1116-1}
Let $ f_i$ , $i\neq n$  be the function defined in \eqref{defoff} and $ \be = (\be_1,\be_2, \cdots, \be_{n-1})   \in ( {\mathbb N} \cup \{ 0\} )^{n-1}$ with $ |\be| = \sum_{1 \leq k \leq n-1} \be_k$.
Then, there exists  $ c_* > 2$ such that for $ 0 < t <2$ and $ |x'| \geq c_*$, 
$D_{x'}^\be  f_i$ can be  decomposed as $ D_{x'}^\be  f_i(x',t) = F_1(x', t) + F_2(x', t)$, where 
\begin{align*}
F_1(x',t) &  =   \De'  D_{x'}^\be   R'_i \psi (x'  )    t, \qquad |F_2 (x',t)|   \leq c t^\frac32 |x'|^{-n -2 -|\be|}.
\end{align*}

\end{lemma}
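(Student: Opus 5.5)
Write $h := R'_i\psi$, so that $f_i(x',t) = \Gamma'(\cdot,t)*h(x') - h(x')$ and $D^\beta_{x'} f_i = \Gamma'(\cdot,t)*D^\beta h - D^\beta h$. The natural approach is a heat-semigroup Taylor expansion in $t$ with integral remainder. Since $\partial_t(\Gamma'(\cdot,t)*u) = \Delta'(\Gamma'(\cdot,t)*u)$, we have
\begin{align*}
\Gamma'(\cdot,t)*u - u = t\,\Delta' u + \int_0^t (t-\tau)\, \Gamma'(\cdot,\tau)*\Delta'^2 u \, d\tau ,
\end{align*}
applied to $u = D^\beta h$. This gives $F_1 = t\,\Delta' D^\beta_{x'} R'_i\psi(x')$ exactly, and
\begin{align*}
F_2(x',t) = \int_0^t (t-\tau)\,\bigl(\Gamma'(\cdot,\tau)*\Delta'^2 D^\beta R'_i\psi\bigr)(x')\,d\tau .
\end{align*}
To justify the expansion, note that $h$ is smooth and bounded with all derivatives, because $\psi\in C^\infty_c$ and Riesz transforms of Schwartz-type compactly supported functions are smooth and decay. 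So the expansion holds pointwise.

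The remaining task is to bound $F_2$. Away from the support, $\psi$ vanishes, so for $|y'|\ge 2$ we have $h(y') = c_n\int \frac{y_i'-z_i'}{|y'-z'|^{n}}\psi(z')\,dz'$. This is smooth there and satisfies $|D^\gamma h(y')| \le c\,|y'|^{-(n-1)-|\gamma|}$, since the kernel is homogeneous of degree $-(n-1)$. Hence, with $|\gamma| = |\beta|+4$,
\begin{align*}
|\Delta'^2 D^\beta h(y')| \le c\,|y'|^{-n-3-|\beta|} \quad \text{for } |y'|\ge 2 .
\end{align*}
Globally, $\Delta'^2 D^\beta h = R'_i(\Delta'^2 D^\beta\psi)$ is bounded. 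It is in fact integrable, with decay $|y'|^{-n-3-|\beta|}$ at infinity.

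Next I estimate $\Gamma'(\cdot,\tau)*v(x')$ for $v = \Delta'^2 D^\beta h$, $|x'|\ge c_*$ and $\tau<2$, splitting the integral into $|y'-x'|<|x'|/2$ and its complement.

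On the near region, $|y'|\ge |x'|/2 \ge 2$ once $c_*\ge 4$. There $|v|\le c|x'|^{-n-3-|\beta|}$, and the Gaussian has unit mass, so this part contributes at most $c|x'|^{-n-3-|\beta|}$.

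On the far region, $\Gamma'(x'-y',\tau)\le c\,\tau^{-(n-1)/2}e^{-|x'|^2/(16\tau)} \le c_N |x'|^{-N}$ uniformly in $\tau<2$. Combined with $v\in L^1$, this part is also bounded by $c|x'|^{-n-3-|\beta|}$.

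Integrating in $\tau$ then gives $|F_2| \le c\,t^2|x'|^{-n-3-|\beta|}$. Since $t<2$ and $|x'|>2$, this is at most $c\,t^{3/2}|x'|^{-n-2-|\beta|}$, which is the claimed bound (indeed a stronger one).

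The main obstacle is the uniform decay estimate for derivatives of $R'_i\psi$ away from the support, together with its integrability near the support. I would handle it by differentiating under the integral using the explicit Riesz kernel for $|y'|\ge 2$. Near the support, I would use that $R'_i$ commutes with derivatives and maps $C^\infty_c$ into bounded smooth functions.
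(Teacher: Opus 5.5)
Your proof is correct, but it takes a different route from the paper's.

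\textbf{The paper's route.} The paper works directly with $D^\beta_{x'} f_i(x',t)=\int \Gamma'(x'-y',t)\bigl(D^\beta R'_i\psi(y')-D^\beta R'_i\psi(x')\bigr)\,dy'$. It splits this integral into the regions $|x'-y'|\le |x'|/2$ and its complement. On the near region it Taylor-expands $D^\beta R'_i\psi$ in space about $x'$:
\begin{itemize}
\item The linear term vanishes by the symmetry of the Gaussian.
\item The quadratic term gives $\frac{1}{2(n-1)}\Delta' D^\beta R'_i\psi(x')\int_{|y'|\le|x'|/2}\Gamma'(y',t)|y'|^2\,dy'$. This equals $t\,\Delta' D^\beta R'_i\psi(x')$ up to an exponentially small truncation error.
\item The cubic remainder is bounded in absolute value by $c|x'|^{-n-2-|\beta|}\int\Gamma'(y',t)|y'|^3\,dy'\le c\,t^{3/2}|x'|^{-n-2-|\beta|}$.
\end{itemize}
The far region is exponentially small.

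\textbf{Your route.} You instead expand in time along the heat semigroup. Then $F_1$ appears exactly, and the remainder has the explicit form $\int_0^t(t-\tau)\,\Gamma'(\cdot,\tau)*(\Delta')^2D^\beta R'_i\psi\,d\tau$. The near/far splitting is then needed only to show that a heat average of a function decaying like $|y'|^{-n-3-|\beta|}$ still decays at that rate, uniformly for $\tau<2$. Since the two $F_1$'s coincide, both arguments bound the same function $F_2$.

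\textbf{What each route buys.}
\begin{itemize}
\item Your route costs one more derivative of $R'_i\psi$ (four instead of three). It also uses integrability of $(\Delta')^2D^\beta R'_i\psi$. Both are available here.
\item In return, it gives the sharper bound $|F_2|\le c\,t^2|x'|^{-n-3-|\beta|}$. The paper gets only $t^{3/2}|x'|^{-n-2-|\beta|}$ because it estimates the cubic Taylor term in absolute value, discarding the vanishing of the odd Gaussian moments.
\item Your explicit integral form also makes the time-derivative bounds used later in \eqref{260404-6} immediate, since $\partial_tF_2=\int_0^t\Gamma'(\cdot,\tau)*(\Delta')^2D^\beta R'_i\psi\,d\tau$. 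The paper's proof leaves those bounds implicit.
\item The paper's route is more hands-on. It needs only decay of third derivatives of $D^\beta R'_i\psi$ away from the origin plus global boundedness of $D^\beta R'_i\psi$.
\end{itemize}
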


\begin{proof}
 The proof of Lemma \ref{lemma1116-1} is provided in Appendix \ref{prooflemma3-1}.
 \end{proof}

Next, we consider $\De' D^\be_{x'}     R_i'  \psi (x') $ to extract the main term for $f_i$ and its derivatives. Note that   for $|\be| \geq 0$, we have 
\begin{align}\label{260404-1}
\De' D^\be_{x'}     R_i'  \psi (x') &  =\int_{\Rn} \Big( \De'  D^\be_{x'}  D_{x_i}    N(x'-z', 0) -\De'  D^\be_{x'}  D_{x_i}  N(x', 0)\Big) \psi(z')  + \De'   D^\be_{x'}   D_{x_i} N(x', 0) c_\psi,
\end{align}
where $ c_\psi = \int_{\Rn} \psi(x') dx'$. 
 Since $ supp\, (\psi) \subset B'(1)$ and $2< |x'|$, the first term in \eqref{260404-1} is estimated as follows:
\begin{align}\label{260404-2}
\begin{split}
 &\abs{\int_{\Rn} \Big( \De'  D^\be_{x'}   D_{x_i}   N(x'-z', 0) -\De' D_{x'}^\be  D_{x_i}  N(x', 0)\Big) \psi(z') dz'  }\\
 \leq    &\int_{\Rn}   \abs{\na' \De'  D^\be_{x'}   D_{x_i}  N(x'-\xi', 0)  \cdot z'} \psi(z') dz'
  \leq  c |x'|^{-n -2-|\be|}.
 \end{split}
\end{align}
It is direct that
\begin{align}\label{260403-2}
\begin{split}
   D_{x_i} \De' N(x',0)   =  \frac{1}{\om_n} \frac{x_i } {|x'|^{n+2}},\qquad 
       D^2 _{x_i} \De'   N(x', 0)    =  \frac{1}{\om_n}    \frac{1} {|x'|^{n+2}} - (n+2) \frac{x_i^2} {|x'|^{n+4}},\\
      \sum_{1 \leq k \leq n-1} \De'  D^2_{x_k} N(x', 0)    = - \frac{3}{\om_n}  \frac1{|x'|^{n+2}},\qquad 
           \sum_{1 \leq k \leq n-1}  D_{x_i} \De'  D^2_{x_k} N(x', 0)    =  \frac{3(n+2)}{\om_n}  \frac{x_i }{|x'|^{n+4}}.
           \end{split}
   \end{align}

Let $ \ep_0 > 0$ be a given small positive number.    From Lemma \ref{lemma1116-1}, \eqref{260404-1}, \eqref{260404-2} and $\eqref{260403-2}$,  there exists $ 2< c_*(\ep_0) $ such that if  $ c_*(\ep_0) <|x'|$, then  we have 
\begin{align}\label{260323-5}
 \begin{array}{c}
%  c_n \int_0^t  \frac{ \phi(s)}{(t-s)^\frac32}   \sum_{1 \leq k \leq n-1}  D_{x_k} f_k (x', t-s) ds   &  \approx   -c_\psi   |x'|^{-n-2}   \int_0^t   (t -s)^{-\frac12}    \phi( s)  ds.\\
 \vspace{4mm}          (  -\frac{3 }{\om_n} - \ep_0) |x'|^{-n-2} c_\psi  t\leq    \sum_{1 \leq k\leq n-1}   D_{x_k}  f_k  (x'  ,t)   
 \leq  (   -\frac{3 }{\om_n} + \ep_0) |x'|^{-n -2}   c_\psi t,  \\
\vspace{4mm}   (\frac1{\om_n} - \ep_0) x_i^{-n -1}t c_\psi \leq f_i (x_i {\bf e}_i,t)   \leq   (\frac1{\om_n} + \ep_0) x_i^{-n -1} c_\psi t,\\
\vspace{4mm}
  ( - \frac{n +1}{\om_n} - \ep_0) x_i^{-n -2} c_\psi  t\leq D_{x_i} f_i  (x_i {\bf e}_i,t)   \leq   (-   \frac{n+1}{\om_n} + \ep_0) x_i^{-n -2} c_\psi t,\\
       (  \frac{3(n+2)}{\om_n} - \ep_0) x_i^{-n -3} c_\psi  t\leq    \sum_{1 \leq k \leq n-1}  D_{x_i} D_{x_k}  f_k  (x_i {\bf e}_i ,t)   \leq   (   \frac{3(n+2)}{\om_n} + \ep_0) x_i^{-n-3}  c_\psi t.
 \end{array}
\end{align}

%%%%%%%%%%%%%%%%%%%%%%%%%%%%%%%%%%%%%%%%
%%%%%%%%%%%%%%%%%%%%%%%%%%%%%%%%%%%%%%%%
%%%%%%%%%%%%%%%%%%%%%%%%%%%%%%%%%%%%%%%%

\section{Proofs of Theorems for the Stokes system}\label{SS-thms}

  \subsection{Proof of Theorem \ref{pressure1103}} %(Adverse Pressure Gradient)}
  \setcounter{equation}{0}
  \label{mainsection}

We note first that 
\[
\int_{\Rn} D_{x_n} N(y', x_n) \Ga' (x'-y' -z', t-s) dy' \Big|_{x_n =0} = \frac12 \Ga' (x'-z', t-s).
\]
Combining it with \eqref{260321-1}, we have 
\begin{align}\label{pressure-6-10}
\begin{split}
p(x',0,t) 
& = 4 \De'\int_{\Rn} N(x' -y', 0) g_n (y', t) dy' - 2 \int_{\Rn} N(x' -y', 0)  D_t g_n (y', t) dy'    \\
& \quad - \frac{1}{2\sqrt{ \pi}} \int_{-\infty}^t \int_{\Rn}\frac{g_n (z',s) -g_n(z',t)}{(t -s)^{\frac{3}{2}}} 
    \Ga'(x' -z',t-s)  dz' ds.
\end{split}
\end{align}
It follows from  direct calculation that
\begin{align}\label{260314-1}
\begin{split}
\frac{1}{2\sqrt{ \pi}} \int_{-\infty}^t \frac{1}{(t -s)^{\frac{3}{2}}} \Ga' (x', t-s) ds 
& = |x'|^{-n} \pi^{-\frac{n}2} \int_0^\infty s^{\frac{n}2 -1} e^{-s} ds = - 2\De' N(x', 0).
\end{split}
\end{align}
Since   $2 < |x'| < c_1 x_i$ and ${\rm supp} \, g_n(\cdot, t) \subset B'(1)$, from $\eqref{260314-1}$, we have 
\begin{align*}
\begin{split}
& \frac{1}{2\sqrt{\pi}} \int_{-\infty}^t \int_{\Rn} \frac{g_n (z', t)}{(t -s)^{\frac{3}{2}}} 
   \Ga'(x' -z',t-s)   dz' ds = (-2\De') \int_{\Rn} g_n (z', t)   N(x'  -z',0)   dz'.
\end{split}
 \end{align*}
Then, using the above idenity, it follows from \eqref{pressure-6-10} that
 \begin{align*}
\begin{split}
p(x',0,t) 
& = 2\De'\int_{\Rn} N(x' -y', 0) g_n (y', t) dy' - 2 \int_{\Rn} N(x' -y', 0)  D_t g_n (y', t) dy'    \\
& \quad - \frac{1}{2\sqrt{\pi}} \int_{-\infty}^t \int_{\Rn}\frac{g_n (z',s)  }{(t -s)^{\frac{3}{2}}} 
    \Ga'(x' -z',t-s)  dz' ds.
\end{split}
\end{align*}
Taking derivative in $x_i$,  $ 1 \leq i \leq n-1$, we get
  \begin{align*}
\begin{split}
D_{x_i} p(x',0,t) 
& = 2D_{x_i} \De'\int_{\Rn} N(x' -y', 0) g_n (y', t) dy' - 2 D_{x_i} \int_{\Rn} N(x' -y', 0)  D_t g_n (y', t) dy'    \\
& \quad - \frac{1}{\sqrt{4 \pi}} \int_{-\infty}^t \int_{\Rn}\frac{g_n (z',s)  }{(t -s)^{\frac{3}{2}}} 
    D_{x_i}\Ga'(x' -z',t-s)  dz' ds.
\end{split}
\end{align*}
For $2 < |x'| < c_1 x_i$, since   $2 < |x'| < c_1 x_i$ and ${\rm supp} \, g_n(\cdot, t) \subset B'(1)$, we obtain
\begin{align}\label{0223-6}
\begin{split}
& 2D_{x_i} \De'\int_{\Rn} N(x' -y', 0) g_n (y', t) dy'  = \phi(t) \frac1{\om_n} \int_{\Rn} \frac{x_i -y_i}{|x' -y'|^{n+2}} \psi(y') dy' \approx  \phi(t)|x'|^{-n -1},\\
& 2 D_{x_i} \int_{\Rn} N(x' -y', 0)  D_t g_n (y', t) dy'  = \frac12  \phi' (t) R'_i \psi(x') \approx \phi' (t) |x'|^{-n +1},\\
 & - \frac{1}{\sqrt{4 \pi}} \int_{-\infty}^0 \frac{\phi( s)  }{(t -s)^{\frac{3}{2}}} 
  \int_{\Rn} D_{x_i} \Ga' (x' -z', t-s) \psi(z') dz' ds  \approx \int_0^t \frac{\phi (s) x_i}{(t-s)^{\frac{n}{2} +2}} e^{-\frac{|x'|^2}{t-s}} ds.
\end{split}
\end{align}
Consequently, the gradient of the pressure satisfies:
\begin{align*}
D_{x_i} p(x', 0,t) \approx \phi(t)|x'|^{-n -1}  - \phi' (t) |x'|^{-n +1} + \int_0^t \frac{\phi (s) x_i}{(t-s)^{\frac{n}{2} +2}} e^{-\frac{|x'|^2}{t-s}} ds.
\end{align*}
Assuming $\phi$ is increasing in $(0, 1)$, for $0 < t < 1$, the integral term is bounded as follows:
\begin{align*}
\int_0^t \frac{\phi (s) x_i}{(t-s)^{\frac{n}{2} +2}} e^{-\frac{|x'|^2}{t-s}} ds &\leq c  \phi(t) |x'| \int_0^t \frac{1}{s^{\frac{n}{2} +2}} e^{-\frac{|x'|^2}{ s}} ds
 = c  \phi(t) |x'|^{-n -1} \int_{\frac{|x'|^2}t}^\infty s^{\frac{n}2}    e^{-s} ds\\
& \leq  c  \phi(t) |x'|^{-1}  t^{-\frac{n}2} e^{-\frac{|x'|^2}t}    \leq  c  \phi(t) |x'|^{-n-2}  t^{\frac{1}2}.
\end{align*}
Thus, for $0 < t < 1$, the leading order terms of $D_{x_i} p$ is given as follows:
\begin{align}\label{1031-1}
D_{x_i} p(x', 0,t) \approx \phi(t)|x'|^{-n -1}  - \phi' (t) |x'|^{-n +1}.
\end{align}
If $\phi(t) \leq c t \phi'(t)$ for $0 < t < t_1$, it follows from \eqref{1031-1} that:
  \begin{align*} 
  D_{x_i} p(x', 0,t)& <  c_1 \phi (t)  |x'|^{-n -1}   - c_2 t^{-1} \phi(t)  |x'|^{-n +1}   = |x'|^{-n +1} \phi(t) \big( c_1 |x'|^{-2}-  c_2    t^{-1}   \big). \label{eq_proof1}
  \end{align*}
  For $x'$ satisfying $t < t_1 < \frac{c_2}{c_1} |x'|^{2}$, we have $D_{x_i} p(x', 0,t) < 0$. This  implies that $t_1 < t_{p,i}^* (x')$, which completes the proof of part (i). 

Conversely, if $\phi'(t_2) = 0$ for some $t_2 \leq 1$, then:
  \begin{align*}
  D_{x_i} p(x', 0,t_2)& > - c_3 |x'|^{-n +1} \phi' (t_2) + c_4 |x'|^{-n -1} \phi(t_2)  = c_4 |x'|^{-n -1} \phi(t_2) > 0,
  \end{align*}
which implies $t_{p,i}^* (x') \leq t_2$. This completes the proof of part (ii).

Noting that   $ - \frac{1}{\sqrt{4 \pi}} \int_{-\infty}^0 \frac{\phi( s)  }{(t -s)^{\frac{3}{2}}} 
  \int_{\Rn} D_{x_i} \Ga' (x' -z', t-s) \psi(z') dz' ds > 0$ and $\phi'(t) \leq 0$ for $ 1 < t<2$, we can see that  there are $ c_1, c_2 > 0$ such that 
 \begin{equation}  \label{pressure260410} 
D_{x_i} p(x', 0,t) \geq  c_1 \phi(t)|x'|^{-n -1}  -  c_2\phi' (t) |x'|^{-n +1} >0.
\end{equation}
This establishes the assertion (iii), and thus we completes the proof of Theorem \ref{pressure1103}.
\qed

\subsection{Proof of Theorem \ref{maintheorem} }
\label{mainsection}

Suppose that $\phi$ satisfies Assumption \ref{assume2}. 
Recall that  $\De' R'_i \psi(x') \approx c_\psi |x'|^{-n -1}$ for large $|x'|$ satisfying $ |x'| \leq c_i x_i$. Using Lemma \ref{lemma1116-1} and \eqref{260323-5},  we have for  large $|x'|$ satisfying $ |x'| \leq c_i x_i$.
\begin{align}\label{0106-2}
\begin{split}
 - c_n \int_0^t  \frac{ \phi(s)}{(t-s)^\frac32}     f_i (x', t-s) ds   % & \leq c     |x'|^{-n-1} \int_0^t  \frac{x_n}{(t -s)^\frac32}e^{-\frac{x_n^2}{4(t-s)}}  (t -s)   \phi(s) ds\\
&  \approx   -c_\psi   |x'|^{-n-1}   \int_0^t   (t -s)^{-\frac12}    \phi( s)  ds.
\end{split}
\end{align}
Since $\phi$ is increasing in $(0,1)$, we oobserve  that for $ 0 < t < 1$
\begin{align}\label{260322-1}
\int_0^t   (t -s)^{-\frac12}    \phi( s)  ds \leq 2t^\frac12 \phi(t)  \leq 2  \phi(t), \qquad 
 M(t) \geq  \int_{-\infty}^0     \frac{\phi(t) }{(t-s)^\frac32}  ds = 2\phi(t) t^{-\frac12} \geq 2\phi(t).
 \end{align}
Therefore, it follows  from \eqref{shortg} that for $ 0 < t< 1$ 
\begin{align}\label{260319-1}
D_{x_n} w_i (x',0,t) \geq    - c_1 c_\psi \phi(t)      |x'|^{-n-1} + c_2  c_\psi\phi(t)    |x'|^{-n +1}.
\end{align}
We denote for convenience $\displaystyle \Psi(t) = \int_0^t   (t -s)^{-\frac12}    \phi( s)  ds$.
On the other hand, for $ t_0< t < t_1 \leq 2$ we obtain from   Assumption \ref{assume2} and \eqref{0106-2}
 \begin{align}\label{260319-2}
D_{x_n} w_i (x',0,t) \leq    - c_3 c_\psi \Psi(t)    |x'|^{-n-1} + c_4 |x'|^{-n +1} M(t) \leq     - c_5 |x'|^{-n +1}.%  -c_4 c_0  c_\psi |x'|^{-n +1}.
\end{align}
Combinig \eqref{260319-1} and \eqref{260319-2}, there exists $c_*> 1$ and $ c_i > 0$  such that if $ c_* < |x'| < c_i x_i$, then 
\[
D_{x_n} w_i(x', 0,t) > 0\,\,\text{ for } 0 < t< 1,\qquad D_{x_n} w_i (x',0,t) < 0\,\,\text{ for } t_0 < t<t_1.
\]
%$ D_{x_n} w_i(x', 0,t) > 0$ for $ 0 < t< 1$ and $ D_{x_n} w_i (x',0,t) < 0$ for $ t_0 < t<t_1$.
Now, fix $x'$ such that $ c_* < |x'| < c_i x_i$.  
We define $t_i^*(x')$ as  
\begin{align*}
t^*_i (x')   = \sup \{ r \, | \, D_{x_n} w_i (x', 0,t) \geq 0 \quad \mbox{for all} \quad  0 < t< r \}.
\end{align*}
By the continuiety of $ D_{x_n} w_i$, we have $ D_{x_n} w_i (x', 0,t) \geq 0$ for all $ 0\leq t \leq t_i^*(x')$. Moreover, since $ f_i > 0$,  we get  $ t_i^*(x') < t_0$, where $ 1 <t_0$ is defiend in Assumption \ref{assume2}.

Since $D_{x_n} w_i (x', 0,t)$ is continuous in $\Rn \times (0, T)$, $D_{x_n} w_i (x', 0, t) > 0$ implies that 
$(x', t)$ satisfies condition ${\mathcal P}_+$. From (3) of  Theorem \ref{pressure1103},  for $ 1 < t\leq t_i^*(x')$, we have 
\begin{align}\label{260324-1}
D^2_{x_n}w_i (x', 0, t) = D_{x_i} p(x', 0,t ) > 0.
\end{align}

Since $w_i (x', 0, t ) = 0$, if  $D_{x_n} w_i (x', 0, t ) = 0$ for $ 0 < t \leq t_i^*(x')$, then from \eqref{260324-1},  we have 
\begin{align*}%\label{1031-2}
w_i(x', x_n, t) \approx D_{x_i} p(x', 0,t) x_n^2 > 0
\end{align*}
for  small $x_n$. Hence,  $(x', t)$ satisfies the condition $ {\mathcal P}_+$ for all $ 0< t\leq t_i^*(x')$.  Therefore, we conlude that $(x',t^*(x') )$ is boundary layer separation of $w_i$. This completes the proof  (i) of Theorem \ref{maintheorem}.

Next, suppose  $\phi$ satisfies Assumption \ref{assume3}.  From \eqref{shortg} and \eqref{0106-2},  for large $|x'|$, we have 
\begin{align}\label{060410-5}
D_{x_n} w_i (x', 0,t)  \approx  -c_\psi   \Psi(t) |x'|^{-n-1}   + |x'|^{-n +1} M(t) > -c_\psi   \Psi(t) |x'|^{-n-1}   + c_0 |x'|^{-n +1} > 0
\end{align}
for all $0 < t < 2$.  Consequently, $(x', t)$ satisfies condition ${\mathcal P}_+$ for all $0 < t < 2$. Hence, $w_i$ exhibits no 
boundary layer separation. This completes the proof of (ii) of Theorem \ref{maintheorem}.
\qed

\subsection{Proof of Theorem \ref{theorem1221} }
\label{mainsection2}

Fix $x_1 > c_*$. In this section, we denote $t^*_i = t_i^* (x_1 {\bf e}_i)$.   For sufficiently large $x_1$, we obtain 
\begin{align}\label{260111-2}
\begin{split}
  \frac1{n\om_n }  \big(1 -\ep_0\big)  c_\psi   x_1^{-n +1}  & \leq \frac1{n\om_n }   c_\psi  \frac{x_1 -1}{(x_1 +1)^n}  \leq  R'_i \psi(x_1 {\bf e}_i)  < \infty,\\
-  \frac{n-1}{n\om_n }  \big( 1  +\ep_0   \big) c_\psi   x_1^{-n} &  \leq  \frac{1}{n\om_n }  c_\psi   \frac{(x_1 -1)^2 -n(x_1 + 1)^2}{(x_1 -1)^{n+2}}  \leq  D_{x_i} R'_i \psi (x_1 {\bf e}_i) <0.
\end{split}
\end{align}

From $ \eqref{260323-5}_2$ and $ \eqref{260323-5}_3$, it follows that:
\begin{align}\label{260111-3}
\begin{split}
f_i (t^*_i -s, x_1 {\bf e}_i) & \leq   \frac1{\om_n} \big( 1  + \ep_0\big) c_\psi   x_1^{-n-1} (t_i^* -s)  < \infty,\\ 
-\infty< D_{x_i} f_i (t_i^* -s, x_1 {\bf e}_i) 
% & \leq -  \frac{n(n+1)}{n-1} x_1^{-n-2} (t^* (x_1{\bf e}_i) -s) 
%   + c x_1^{-n -3} (t^* (x_1{\bf e}_i) -s)^{\frac{3}{2}}\\
& \leq -      \frac{n+1}{\om_n} \big( 1 - \ep_0 \big)   c_\psi x_1^{-n-2} (t_i^* -s). 
\end{split}
\end{align}

By setting $\epsilon_0 = (4n)^{-1}$ and combining \eqref{260111-2} and \eqref{260111-3}, we derive:
\begin{align}\label{260307-1}
\begin{split}
& \frac{D_{x_i} R'_i \psi(x_1 {\bf e}_i)}{R'_i \psi (x_1 {\bf e}_i)} f_i (t_i^* -s, x_1 {\bf e}_i) - D_{x_i} f_i (t_i^* -s, x_1 {\bf e}_i) \\
%& \geq \Big( \frac{ \Big( -(n-2)(n-1) +\ep \Big)}{(n-2)-\ep} c_0 \big( \frac{n}{n-1} +\ep \big) 
% + c_0 \big( \frac{n(n+1)}{n-1} -\ep\big) \Big) x_1^{-n-2} (t^* (x_1{\bf e}_i) -s)\\
& \geq  -  \frac{  \frac{n-1}{n \om_n }  \big( 1  +\ep_0   \big)  }{  \frac1{n \om_n}  \big(1 -\ep_0\big) }    \frac1{\om_n} \big( 1  + \ep_0\big) c_\psi   x_1^{-n-2} (t_i^* -s)
 +    \frac{n +1}{\om_n} \big( 1 - \ep_0 \big)   c_\psi x_1^{-n-2} (t_i^* -s)\\
 &  =  \big(  -\frac{(1 +\ep_0)^2}{1-\ep_0}  (n-1) + (1-\ep_0)(n+1) \big)  \frac1{\om_n}  c_\psi         x_1^{-n-2} (t_i^* -s)\\
  &  = \frac1{\om_n} \frac{8n^2 +1}{ 2n (4n-1)}   c_\psi        x_1^{-n-2} (t_i^* -s).
\end{split}
\end{align}
We note via $D_{x_n} w_i (x_1 {\bf e}_i, 0, t)=0$ in \eqref{shortg}  that
\begin{align*} 
\begin{split}
\int_{-\infty}^{t_i^*} \frac{\phi(t_i^*) -\phi(s)}{(t_i^* -s)^{\frac{3}{2}}} ds 
= \int_0^{t_i^*} \frac{\phi(s)}{(t_i^* -s)^{\frac{3}{2}}} \frac{f_i (t_i^* -s, x_1 {\bf e}_i)}{R'_i \psi (x_1 {\bf e}_i)} ds,
\end{split}
\end{align*}
Therefore,  due to \eqref{260307-1}
we obtain 
\begin{align}\label{260111-1}
\begin{split}
D_{x_i} D_{x_n} w_i \Big|_{(x', x_n, t) = (x_1 {\bf e}_i, 0, t_i^*)} 
& = \int_0^{t^*} \frac{\phi(s)}{(t_i^* -s)^{\frac{3}{2}}} 
\Big( \frac{D_{x_i} R'_i \psi(x_1 {\bf e}_i)}{R'_i \psi (x_1 {\bf e}_i)} f_i (t_i^* -s, x_1 {\bf e}_i) 
- D_{x_i} f_i (t_i^* -s, x_1 {\bf e}_i) \Big) ds\\
%\label{260321-5-1-1}
%D_{x_i} D_{x_n} w_i \Big|_{(x', x_n, t) = (x_1 {\bf e}_i, 0, t_i^*)} 
%& \geq c_1 x_1^{-n -2} \int_0^{t^* (x_1{\bf e}_i)} \frac{\phi(s)}{(t^* (x_1{\bf e}_i) -s)^{\frac{1}{2}}} ds\\
&\geq  \frac1{\om_n}  c_\psi   \frac{8n^2 +1}{ 2n (4n-1)}   x_1^{-n-2} \int_0^{t_i^*} \frac{\phi(s)}{(t_i^* -s)^{\frac{1}{2}}} ds 
\geq c_1  x_1^{-n-2}.
\end{split}
\end{align}

Since  $ D_{x_i} D_{x_n} w_i (x', 0,t)$ is uniformly continuous on $B'(2x_1) \times (0,2)$, there exists $\de= \de(x_1)> 0$ such that  if   $ |x_1  -x_i| < \de $, then, from \eqref{260111-1}, this  implies  that
\begin{align*}
D_{x_i} D_{x_n} w_i  ((x_i{\bf e}_i, 0 , t_i^*) \geq  \frac12 c_1  x_1^{-n-2}.
\end{align*}
This ensures that for $x_1 - \delta < y_i < x_1 < x_i < x_1 + \delta$:
\begin{align}\label{260224-5-1}
D_{x_n} w_i (y_i \mathbf{e}_i, 0, t^*_i ) < D_{x_n} w_i (x_1 \mathbf{e}_i, 0, t_i^*) = 0 < D_{x_n} w_i (x_i \mathbf{e}_i, 0, t_i^*).
\end{align}
The left inequality of \eqref{260224-5-1} leads to  
\begin{align*}%\label{0223-1}
t_i^*(y_i {\bf e}_i) < t_i^*(x_1 {\bf e}_i)  \quad \text{for all } x_1 -\de < y_i < x_1.
\end{align*}

Let  $ x_1 < z_1< x_1 +\de $.  From the process to obtain   \eqref{260111-1}, we have 
\begin{align*}%\label{260321-5}
\begin{split}
D_{x_i} D_{x_n} w_i \Big|_{(x', x_n, t) = (z_1 {\bf e}_i, 0, t_i^*(z_1))}  \geq c_1  z_1^{-n-2} \geq c_1   x_1^{-n-2}.
\end{split}
\end{align*}
This implies 
\begin{align*}%\label{260323-7}
 t_i^*(x_1) <   t_i^* (z_1) \quad \mbox{for} \quad x_1 < z_1< x_1 +\de.
\end{align*}
 
 Let $\delta_0 = \sup\{ \delta > 0 \mid t_i^*(x_1 {\bf e}_i) < t_i^* (x_i {\bf e}_i) \text{ for all } x_i \in (x_1, x_1 + \delta) \}$. We have shown $\delta_0 > 0$. Suppose, for the sake of contradiction, that $\delta_0 < \infty$. Applying the same local monotonicity argument at the point $(x_1 + \delta_0) {\bf e}_i$, there exists $\delta_1 \in (0, \delta_0)$ such that for $x_1 +\delta_0 -\delta_1 < y_i < x_1 +\delta_0 < x_i < x_1 +\delta_0 + \delta_1$, we have:
\begin{equation*} %\label{0224-6}
t^*_i (y_i {\bf e}_i) < t^*_i((x_1 +\delta_0) {\bf e}_i) < t^*_i (x_i {\bf e}_i).
\end{equation*}
Then, for all $x_1 < x_i < x_1 + \delta_0 + \delta_1$, it holds that $t^*_i (x_1 {\bf e}_i) < t^*_i (y_i {\bf e}_i) < t^*_i((x_1 +\delta_0) {\bf e}_i) < t^*_i (x_i {\bf e}_i)$. This contradicts the definition of $\delta_0$ as the supremum. Hence, $\delta_0 = \infty$, completing the proof of part (i).

 From Lemma \ref{lemma1116-1}, we have $f, R'_i \psi > 0$ for $x_i > c_*$, 
which implies $t_i^* (x_i {\bf e}_i) < t^*_0 $,  where $t^*_0  $ is defined in \eqref{t^*_1}.  Since $ t^*_i(x_i{\bf e}_i)$ is increasing with respect to $x_i$ and bounded  above , there exists  $r^*  \leq t^*_0$ such that $\displaystyle \lim_{x_i \rightarrow \infty}  t_i^*(x_i{\bf e}_i) =  r^*$. Using   $D_{x_n} w_i (x_i {\bf e}_i, 0, t_i^* (x_i {\bf e}_i)) = 0$, we have 
\begin{align*}
\int_{-\infty}^{t^*_i  (x_i {\bf e}_i)} \frac{\phi(t^*_i (x_i {\bf e}_i)) -\phi(s)}{(t^*_i (x_i {\bf e}_i) -s)^{\frac{3}{2}}} ds 
& = \frac{1}{R'_i \psi (x_i {\bf e}_i)} \int_0^{t^*_i (x_i {\bf e}_i)} \frac{\phi(s)}{(t^*_i (x_i {\bf e}_i) -s)^{\frac{3}{2}}} f(t^*_i (x_i {\bf e}_i) -s, x_i {\bf e}_i) ds.
\end{align*}
By the estimate $\frac{f(x_i {\bf e}_i , t^*(x_i {\bf e}_i ) )}{R'_i \psi (x_i {\bf e}_i )} \leq c x_i^{-2} (t^*_i (x_i {\bf e}_i ) -s)$ from Lemma \ref{lemma1116-1}, taking the limit as $x_i \rightarrow \infty$ yields:
 \begin{align*}
\lim_{x_i \rightarrow \infty} \int_{-\infty}^{t^*_i (x_i {\bf e}_i)} \frac{\phi(t^*_i (x_i {\bf e}_i)) -\phi(s)}{(t^*_i (x_i {\bf e}_i) -s)^{\frac{3}{2}}} ds
= \int_{-\infty}^{r^*  } \frac{\phi(r^*  -\phi(s)}{(r^* ) -s)^{\frac{3}{2}}} ds & = 0.
\end{align*}
This implies $r^* = t^*_0$.    This completes the proof of (ii) of Theorem \ref{theorem1221}.
 \qed

%%%%%%%%%%%%%%%%%%%%%%%%%%%%%%%%%%%%%%%%%%%%%%%%%%%%%%%%%%%
%%%%%%%%%%%%%%%%%%%%%%%%%%%%%%%%%%%%%%%%%%%%%%%%%%%%%%%%%%%

 %%%%%%%%%%%%%%%%%%%%%%%%%%%%%%%%%%%%%%%%%%%%%%%%%%%%%
%%%%%%%%%%%%%%%%%%%%%%%%%%%%%%%%%%%%%%%%%%%%%%%%%%%%%
%%%%%%%%%%%%%%%%%%%%%%%%%%%%%%%%%%%%%%%%%%%%%%%%%%%%%%

\subsection{Proof of Theorem \ref{theorem260403} }
\label{mainsection2}

To prove Theorem \ref{theorem260403}, we use the following lemma.
\begin{lemma}\label{theorem1221-2}
Let $\phi \in C^2([0,2])$ satisfy Assumption \ref{assume1} and $ \phi'(0) =0$. Let $M$ satisfy \eqref{260321-7}.  
\begin{itemize}
\item[(i)]
Let $ 1 \leq i \leq n-1$. There exist positive constants $ c_{*5}>2$ and $c_i>0$  such that  if $c_{*5} < |x'| < c_i x_i $,  then $ D_{x_n} w_i(x', 0,t)$ is strictly decreasing  in $ t \in (1,2)$.

\item[(ii)] There exists  a  positive constant $c_{*6}>0$ such that   if $c_{*6} < |x'|   $,  then $ D^2_{x_n} w_n(x', 0,t)$ is strictly decreasing in $t \in (1,2)$.
\end{itemize}

\end{lemma}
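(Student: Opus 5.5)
The plan is to differentiate the boundary formula \eqref{shortg} in $t$ and show that, for large $|x'|$, the term carrying $M'(t)$ dominates. For part (ii) the same is done after converting $D^2_{x_n}w_n$ into tangential derivatives via the divergence-free condition.

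\emph{Part (i).} Substituting $r=t-s$ in the first term of \eqref{shortg} gives
\begin{align*}
D_{x_n} w_i (x',0,t) = -\frac1{\pi^{1/2}}\int_0^t \frac{\phi(t-r)}{r^{3/2}} f_i(x',r)\,dr + \frac1{\pi^{1/2}} R'_i\psi(x')\, M(t).
\end{align*}
I will differentiate in $t$.
\begin{itemize}
\item The boundary term from the upper limit is $\phi(0)t^{-3/2}f_i(x',t)=0$, since $\phi(0)=0$.
\item What remains of the first term is $-\pi^{-1/2}\int_0^t \phi'(t-r) r^{-3/2} f_i(x',r)\,dr$.
\item By Lemma~\ref{lemma1116-1} (with $\beta=0$) and \eqref{260404-2}, we have $|f_i(x',r)|\le c\, r\,|x'|^{-n-1}$ for $|x'|\ge c_*$ and $r<2$. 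The integrand is therefore $O(r^{-1/2})$, which is integrable and justifies differentiating under the integral.
\item Using $\phi\in C^2$, the first term contributes at most $c\|\phi'\|_\infty |x'|^{-n-1}$.
\item The second term contributes $\pi^{-1/2}R'_i\psi(x')M'(t)$.
\end{itemize}
Under $|x'|<c_ix_i$ we have $R'_i\psi(x')\approx c_\psi x_i|x'|^{-n}\ge c|x'|^{-n+1}$, as in \eqref{260111-2}. Together with \eqref{260321-7} this gives
\begin{align*}
\partial_t D_{x_n}w_i(x',0,t)\le -c\,e_0|x'|^{-n+1}+c\|\phi'\|_\infty|x'|^{-n-1}<0
\end{align*}
for $1<t<2$ once $|x'|>c_{*5}$ is large.

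\emph{Part (ii).} Since $\operatorname{div} w=0$ and $w=0$ on $\{x_n=0\}$ away from $\operatorname{supp}\psi$ (note $|x'|>2$), we get
\begin{align*}
D^2_{x_n}w_n(x',0,t)=-\sum_{k\le n-1}D_{x_k}D_{x_n}w_k(x',0,t).
\end{align*}
Differentiating \eqref{shortg} in $x_k$ and summing yields
\begin{align*}
D^2_{x_n}w_n(x',0,t)=\frac1{\pi^{1/2}}\int_0^t\frac{\phi(s)}{(t-s)^{3/2}}\sum_k D_{x_k}f_k(x',t-s)\,ds-\frac1{\pi^{1/2}}\Big(\sum_k D_{x_k}R'_k\psi(x')\Big)M(t).
\end{align*}
The quantity $\sum_kD_{x_k}R'_k\psi$ equals, up to constants, $\Delta'$ applied to the convolution of $\psi$ with $D_{x_n}N(\cdot,0)$-type kernels. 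Expanding far from the support as in \eqref{260404-1}–\eqref{260404-2}, I expect $\sum_kD_{x_k}R'_k\psi(x')=-c\,c_\psi|x'|^{-n}+O(|x'|^{-n-1})$ with $c>0$. The $M$-term then has $t$-derivative bounded above by $-c\,e_0|x'|^{-n}$. The first term is differentiated in $t$ exactly as in (i), now using $\eqref{260323-5}_1$: $|\sum_kD_{x_k}f_k(x',r)|\le c\,r|x'|^{-n-2}$. This makes its $t$-derivative $O(|x'|^{-n-2})$, so $D^2_{x_n}w_n(x',0,t)$ is strictly decreasing on $(1,2)$ for $|x'|>c_{*6}$.

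\emph{Main obstacle.} The delicate point is the sign and exact order of the leading coefficient $\sum_kD_{x_k}R'_k\psi(x')$ for large $|x'|$. I will compute it directly from the kernel $R'_k=2D_{x_k}N(\cdot,0)*$ in the normalization of \eqref{NB0926}, and check it against \eqref{260403-2}. A secondary point is verifying that no boundary-in-time terms survive the differentiation; this relies on $\phi(0)=0$, and on $\phi'(0)=0$ where the representation \eqref{1026-6-1} of $M$ is used.
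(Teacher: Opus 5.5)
Your proof is correct, and its skeleton is the paper's. You differentiate the boundary formula \eqref{shortg} in $t$; for (ii) you first rewrite $D^2_{x_n}w_n$ using $\operatorname{div} w=0$. Then the $M'(t)$ term, which has the right sign and size $e_0|x'|^{-n+1}$ (resp.\ $e_0|x'|^{-n}$), dominates the time derivative of the convolution term, which is smaller by a factor $|x'|^{-2}$.

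The difference lies in how you differentiate that convolution term. The paper integrates by parts twice in $s$; the boundary terms vanish because $\phi(0)=\phi'(0)=0$ and $D_{x_k}f_k(x',r)=O(r)$. It then computes $\partial_t D_{x_k}f_k=g_k+\Delta' D_{x_k}R'_k\psi$ from the heat equation, and needs the extra bounds \eqref{260404-6} on $g_k$, $h_k$ and their time derivatives, together with $\phi''$. Your substitution $r=t-s$ puts the $t$-dependence (apart from the upper limit) on $\phi(t-r)$. The Leibniz rule then produces only the vanishing boundary term $\phi(0)t^{-3/2}f_i(x',t)$ and $\int_0^t\phi'(t-r)r^{-3/2}f_i(x',r)\,dr$. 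That integral is controlled by $\|\phi'\|_\infty$ and the linear-in-$r$ bound from Lemma \ref{lemma1116-1}, and your domination of the integrand by $c\,r^{-1/2}$ is enough to justify the differentiation.

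Your route is shorter and bypasses $g_k$, $h_k$ and $\eqref{260404-6}_2$ altogether. It uses $\phi\in C^2$ and $\phi'(0)=0$ only to make sense of $M'$ through \eqref{M-prime}. It also treats part (i) explicitly, which the paper leaves as ``similar''.

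One correction to what you call the main obstacle. With $R'_k=2D_{x_k}N(\cdot,0)*$, the relevant identity is $\sum_k D_{x_k}R'_k\psi=2\Delta'\big(N(\cdot,0)*\psi\big)$. There is no $D_{x_n}N$ kernel involved; that kernel vanishes on $\{x_n=0\}$ away from the origin. For $|x'|>2$ this sum equals $-\frac{2}{n\omega_n}\int|x'-z'|^{-n}\psi(z')\,dz'<0$. That gives exactly the sign you need and agrees with \eqref{260318-3} up to normalization.
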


 \begin{proof}
Since the proofs of (i) and (ii) are similar, we only prove (ii). Using the integration by parts, we have 
\begin{align}\label{0904-1}
\begin{split}
\int_0^t   \frac{\phi(s)}{(t-s)^\frac32}     D_{x_k}  f_k (x', t-s) 
%& =    2\int_0^t  \phi(s)    \frac{d}{ds}   (t-s)^{-\frac12}   f(x', t-s)  ds\\
=  -  2\int_0^t    \Big(  \phi' (s)      D_{x_k}  f _k (x', t-s)  -  \phi (s)      \frac{d}{dt} D_{x_k}  f _k (x', t-s)  \Big)  (t-s)^{-\frac12} ,
\end{split}
\end{align}
where $f_k$ is defined in \eqref{defoff}. 
Since $ \big( D_t -\De' \big) \Ga'(y', t) =0$ for $ t> 0$ and   $\int_{\Rn} \Ga' (x' - y', t) dy' =1$ for all  $(x', t)$, we have
\begin{align}\label{0904-2}
\begin{split}
\frac{d}{dt}  D_{x_k}  f _k ( x', t)  &  =\int_{\Rn} \De'_{y'} \Ga' (x'-y', t) \Big( D_{y_k} R'_k \psi(y')  - D_{x_k} R'_k \psi(x') \Big) dy'\\  
&  =\int_{\Rn}  \Ga' (x'-y', t)  \De'_{y '} D_{y_k} R'_k \psi(y')  dy'\\ 
&  =\int_{\Rn}   \Ga' (x'-y', t) \Big(  \De'_{y'} D_{y_k}   R'_k\psi(y')  -  \De'_{x'} D_{x_k} R'_k \psi(x') \Big) dy' + \De'_{x'} D_{x_k} R'_k \psi(x')\\
& := g_k(x', t) +   \De'_{x'} D_{x_k} R'_k \psi(x'),
\end{split}
\end{align}
where 
\begin{align*}
g_k (x', t-s)  = \int_{\Rn} \Ga' (x'- y', t-s) \Big(  \De'  D_{y_k} R'_k \psi(y') -   \De' D_{x_k}  R'_k \psi(x') \Big) dy'.
\end{align*} 
From \eqref{0904-1} and \eqref{0904-2},  we have 
\begin{align*}%\label{260106-1}
\begin{split}
&\int_0^t   \frac{\phi(s)}{(t-s)^\frac32}  D_{x_k}  f _k (x', t-s) ds\\
 =& -  2\int_0^t    \Big(  \phi' (s)    D_{x_k}  f _k (x', t-s)  -  \phi (s)   \big(g_k (x', t-s) +   \De'_{x'} D_{x_k} R'_k \psi(x')    \big)  \Big)  (t-s)^{-\frac12}   ds\\
=& - 4\int_0^t    \Big(  \phi'' (s)    D_{x_k}  f _k(x', t-s)  -  2\phi'  (s)   \Big(g_k (x', t-s) +   \De'_{x'} D_{x_k} R'_k \psi(x') \Big) \\
& \quad    + \phi (s) \big(h_k (x', t-s)   + ( \De')^2 D_{x_k}  R'_k \psi(x')   \big) \Big)  (t-s)^{\frac12}   ds,
\end{split}
\end{align*}

where 
\begin{align*}
h_k (x', t-s)  = \int_{\Rn} \Ga' (x'- y', t-s) \Big(  ( \De')^2  D_{y_k} R'_k \psi(y') -  ( \De')^2  D_{x_k}  R'_k \psi(x') \Big) dy'.
\end{align*}

Reminding that $ {\rm div} \, w =0$, we have  from \eqref{shortg}
\begin{align}\label{260318-1-1}
\begin{split}
D_{x_n}^2 w_n(x',  0, t) & =  -\sum_{1 \leq k \leq n-1} D_{x_k} D_{x_n} w_k (x', 0, t)\\
 & =   c_n \sum_{1 \leq k \leq n-1}\int_0^t  \frac{ \phi(s)}{(t-s)^\frac32}    D_{x_k}  f_k (x', t-s) ds    -c_n  \sum_{1 \leq k \leq n-1} D_{x_k}  R'_k \psi(x) M(t)\\
 & = -4  c_n  \sum_{1 \leq k \leq n-1} \int_0^t    \Big(  \phi'' (s)    D_{x_k}  f _k(x', t-s)  -  2\phi'  (s)   [g_k (x', t-s) +   \De'_{x'} D_{x_k} R'_k \psi(x') ] \\
& \quad    + \phi (s) \big(h_k (x', t-s)   + ( \De')^2 D_{x_k}  R'_k \psi(x')   \big) \Big)  (t-s)^{\frac12}   ds\\
 &  \quad    -4c_n  \sum_{1 \leq k \leq n-1} D_{x_k}  R'_k \psi(x) M(t).
 \end{split}
\end{align}
Hence,  it follows from \eqref{260318-1-1} that
\begin{align}\label{260318-1}
\begin{split}
\frac{d}{dt} D_{x_n}^2 w_n(x',  0, t) 
 & =  -4  c_n  \sum_{1 \leq k \leq n-1} \int_0^t    \Big(  \phi'' (s)     \frac{d}{dt}  D_{x_k}  f _k(x', t-s)  -  2\phi'  (s)   \frac{d}{dt}  g_k (x', t-s)  \\
& \quad    + \phi (s)  \frac{d}{dt}  h_k (x', t-s)   \Big)  (t-s)^{\frac12}   ds\\
&\quad   -2    c_n  \sum_{1 \leq k \leq n-1} \int_0^t    \Big(  \phi'' (s)    D_{x_k}  f _k(x', t-s)  -  2\phi'  (s)   \Big(g_k (x', t-s) +   \De'_{x'} D_{x_k} R'_k \psi(x') \Big) \\
& \quad    + \phi (s) \big(h_k (x', t-s)   + ( \De')^2 D_{x_k}  R'_k \psi(x')   \big) \Big)  (t-s)^{-\frac12}   ds\\
 &  \quad    -2c_n  \sum_{1 \leq k \leq n-1} D_{x_k}  R'_k \psi(x)M'(t).
 \end{split}
\end{align}

Note that  for  $  2 < | x'| $.
\begin{align}\label{260318-3}
\sum_{1 \leq k \leq n-1} D_{x_k}   R'_k \psi(x')  & = -    \frac1{n\om_n } \int_{\Rn}  \frac{1} {|x' -z'|^n}   \psi(z')   dz'  \approx - |x'|^{-n} c_\psi.
\end{align}
From the proof of Lemma \ref{lemma1116-1},  for $ 2 < |x'| $,
\begin{align}\label{260404-6}
\begin{split}
|     D_{x_k} f_k ( x', t)|, \quad  |x'|^2|     g_k ( x', t)|, \quad |x'|^4 | h_k (x', t) | & \leq   c |x'|^{-n-2} t,\\
|\frac{d}{d t}  D_{x_k}  f _k ( x', t)|,\quad |x'|^2 |\frac{d}{d t}  g _k ( x', t)|,\quad  |x'|^4 |\frac{d}{d t} h_k ( x', t)|  &  \leq c |x'|^{-n-2}.
\end{split}
\end{align}

Applying    \eqref{260318-3} and \eqref{260404-6} to  \eqref{260318-1}, if $ M'(t) < -e_0$, then  for $ 1 < t < 2$,  we obtain 
 \begin{align}\label{0915-1-1}
 \begin{split}
  \frac{d}{dt} D^2_{x_n} w_n (x', 0, t) & \leq  c|x'|^{-n-2} \int_0^t \Big( |\phi(s)| + |\phi'(s)|  + |\phi''(s)|   \Big) ( t-s)^{\frac12} ds \\
& \quad +  c|x'|^{-n-2} \int_0^t \Big( |\phi(s)| + |\phi'(s)|     \Big) ( t-s)^{-\frac12} ds+ |x'|^{-n } M'(t)\\
  & \leq   c_2 |x'|^{-n-2}   - c e_0 |x'|^{-n}\\
  & < 0
 \end{split}
 \end{align}
provided $ \frac{c_2}{c e_0}   < |x'|^2 $.
This implies  that $ D^2_{x_n}  w_n (x', 0, t) $ is strictly decreasing for $ t \in (1,2)$. This completes the proof of (ii) of 
Theorem \ref{theorem1221-2}.
\end{proof}

 \begin{pfthm-SS-19}
Since the proofs of (i) and (ii) are similar, we only prove (ii).   Applying  \eqref{260322-1},   \eqref{260318-3} and $\eqref{260404-6}_1$ to \eqref{260318-1-1},   if $ 0 < t< 1$, then for large $|x'|$, we have 
\begin{align*}%\label{260319-1-1}
D^2_{x_n} w_n (x',0,t) \geq    - c_1 c_\psi \phi(t)      |x'|^{-n-2} + c_2  c_\psi\phi(t)    |x'|^{-n } > 0.
\end{align*}

From   Assumption \ref{assume1},   we have 
  \begin{align*}%\label{260319-2-1}
\begin{split}
D_{x_n}^2 w_n(x',  0, t)  
 & =   c_n \sum_{1 \leq k \leq n-1}\int_0^t  \frac{ \phi(s)}{(t-s)^\frac32}    D_{x_k}  f_k (x', t-s) ds    -c_n  \sum_{1 \leq k \leq n-1} D_{x_k}  R'_k \psi(x) M(t)\\
&  \leq    - c_3 c_\psi \Psi(t)    |x'|^{-n-2} +   c_4 c_0  c_\psi |x'|^{-n } M(t), 
\end{split}
\end{align*}
where $\Psi(t) = \int_0^t \frac{\phi(s)}{(t -s)^\frac12} ds$. In particular,  since $\phi(2) =0$, we have 
\begin{align*}
\begin{split}
D_{x_n}^2 w_n(x',  0, 2)  &  <     - c_3 c_\psi \Psi(t)    |x'|^{-n-2}<0,
\end{split}
\end{align*}
Let $ x'$ fix satisfying $|x'| > c_*$.  Since $D_{x_n}^2 w_n(x',0,t)$ is strictly decreasing on $(1,2)$ by Lemma \ref{theorem1221-2}(ii), there exists a unique $t_n^*(x')\in(1,2)$ such that
\[
D_{x_n}^2 w_n(x',0,t)>0 \quad \text{for } t\in(0,t_n^*(x')),
\qquad
D_{x_n}^2 w_n(x',0,t)<0 \quad \text{for } t\in(t_n^*(x'),2).
\]
Then $(x',t_n^*(x'))$ is a point of boundary layer separation for $w_n$.
Since $D_{x_n}p(x',0,t)=D_{x_n}^2 w_n(x',0,t)$, we obtain (ii), and thus we complete the proof of (ii).

Finally, let $x' = x_1 \mathbf{e}_i$ with $x_1 > c_*$. By the same argument as in the proof of Theorem \ref{theorem1221}, we show that $t_n^*(x')$ is monotone in the $\mathbf{e}_i$-direction. Moreover, since $\psi$ is radially symmetric, both $D_{x_n}^2 w_n(x',0,t)$ and $t_n^*(x')$ are radially symmetric in $x'$, and hence depend only on $|x'|$. This completes the proof for all directions, and thus yields (iii).

Assertion (iv) follows by the same argument as in the proof of Theorem \ref{theorem1221}(ii). This completes the proof of Theorem \ref{theorem260403}.
\end{pfthm-SS-19}

%%%%%%%%%%%%%%%%%%%%%%%%%%%%%%%%%%%%%%%%
%%%%%%%%%%%%%%%%%%%%%%%%%%%%%%%%%%%%%%%%
%%%%%%%%%%%%%%%%%%%%%%%%%%%%%%%%%%%%%%%%

\section{Proof of Theorem \ref{maintheorem(NS)}}\label{NS-thm}
%\section{ Holder continuity of  solutions of Stokes equations}
\setcounter{equation}{0}
In this section, we provide the proof of Theorem \ref{maintheorem(NS)}

We first  investigate the Hölder continuity of solutions to the following system:
\begin{align}\label{Stokes-10-2}
\left\{\begin{array}{l}\vspace{2mm}
 v_t - \De v+  \na q ={\rm div } F, \quad Q_+(T),\\
\vspace{2mm}
\mbox{div } v =0,  \quad  Q_+(T),\\
v|_{x_n} =g, \,\, v|_{t =0} =0. 
\end{array}
\right.
\end{align}

\begin{lemma}\label{so}
Let  $w$ be a solution to  the system \eqref{Stokes-10-2} with   $ F \equiv 0$. For $ k \in \{1,2\}$ and $ 0 < \al < 1$, assume that
   $g, \, R' g_n  \in   C^{ k +\al, \frac{k}2 +\frac{\al}2} (q(T)) $ and the compatibility conditions $  \sum_{ 0 \leq l \leq k-1}  D^{l}_t g(x',0)   =0$ hold.  Then,    $w$ satisfies:
\begin{align*}
\| w\|_{    C^{ k +\al, \frac{k}2 +\frac{\al}2}  (Q_+(T) )} &  \leq c \Big( \| g\|_{   C^{ k +\al, \frac{k}2 +\frac{\al}2} (q(T)) }  
+ \|R' g_n \|_{  C^{ k +\al, \frac{k}2 +\frac{\al}2} (q(T) ) }  \Big).
\end{align*}
\end{lemma}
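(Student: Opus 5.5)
We plan to prove Lemma \ref{so} directly from the representation formula \eqref{rep-bvp-stokes-w}, splitting $w$ as in \eqref{repre0926}. The terms $w_i^G$ and $w^B_i$ are caloric double-layer potentials of $g_i$ and $R'_ig_n$ respectively, by \eqref{NB0926}. The term $w^N_i$ is a harmonic (Poisson) extension of $R'_ig_n(\cdot,t)$, again by \eqref{NB0926}. The remaining pieces $w_{ij}$ and $w^L_i$ carry the kernels $L_{ij}$, $L_{ni}$. The normal component $w_n$ is treated in the same way using the kernel $K_{nj}$.

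\emph{Step 1 (Gaussian and Poisson parts).} For the operator $h\mapsto -2\int_0^t\int_{\Rn}D_{x_n}\Ga(x'-y',x_n,t-s)h(y',s)\,dy'ds$ we will use the classical boundary Schauder estimate for the heat equation in $Q_+(T)$ with Dirichlet data. It gives $\|\cdot\|_{C^{k+\al,\frac k2+\frac\al2}(Q_+(T))}\le c\|h\|_{C^{k+\al,\frac k2+\frac\al2}(q(T))}$, provided the compatibility condition $D^l_th(\cdot,0)=0$, $l\le k-1$, holds. This is exactly the assumed compatibility, and it is what allows us to extend $h$ by zero to $t<0$ without loss of regularity. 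Applying this with $h=g_i$ and $h=R'_ig_n$ controls $w_i^G$ and $w_i^B$.

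For $w^N_i$ we use the spatial Schauder estimate for the Poisson extension at each fixed $t$, which gives $C^{k+\al}$ in $x$. The time regularity is inherited directly, because the Poisson extension commutes with $t$-differences; hence a $C^{\frac k2+\frac\al2}$ bound in $t$ of $R'_ig_n$ transfers pointwise.

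One caveat remains here. The anisotropic norm also involves mixed regularity, such as the $C^{\frac\al2}$-in-time bound for $D^2_xw^N$ when $k=2$. This requires interpolating the $x$- and $t$-regularity of $R'g_n$ in the parabolic scale, and we plan to do this via standard parabolic interpolation.

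\emph{Step 2 ($L$-kernel parts).} This is the main obstacle. From the definition, $L_{ij}=4D_{x_j}\int_0^{x_n}\int_{\Rn}D_{z_n}\Ga(z,t)D_{x_i}N(x-z)\,dz$. We intend to use Solonnikov's pointwise estimates
\[
|D^\be_xD^m_tL_{ij}(x,t)|\le c\,t^{-\frac12-m}(|x|^2+t)^{-\frac{n+|\be|}{2}},
\]
together with the symmetry relations \eqref{1006-3}. With these, each $w_{ij}$ and $w^L_i$ can be handled as a parabolic singular integral of Calder\'on--Zygmund type acting on the boundary data.

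To avoid redoing Solonnikov's analysis, we would write $L_{ij}$-potentials as compositions of the $D_{x_n}\Ga$-potential with tangential Riesz transforms. Concretely, we take the Fourier transform in $x'$ and use the structure of the $n$-th component, so that $w_{ij}$ becomes a Gaussian-type potential of $R'_iR'_jg_j$ plus a smoothing remainder. Riesz transforms are bounded on $C^{k+\al,\frac k2+\frac\al2}(q(T))$ when the functions are compactly supported or decay appropriately. Alternatively, we simply quote Solonnikov's H\"older estimate for the Stokes Poisson kernel, which is the content of \cite{So}, and check that its hypotheses reduce to the stated norms of $g$ and $R'g_n$.

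\emph{Step 3 (summation).} Summing the bounds from Steps 1 and 2 over the components gives the claimed estimate.

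The delicate point is the nonlocal term $2\de_{jn}\de(t)D_{x_i}N$ in \eqref{Poisson-tensor-K}. It produces no smoothing in time, which is why $\|R'g_n\|$ must appear on the right-hand side instead of only $\|g\|$.
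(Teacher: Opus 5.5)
Your Step 1 is fine and coincides with the paper. By \eqref{NB0926}, $w^G_i$ and $w^B_i$ are caloric double-layer potentials of $g_i$ and $R'_ig_n$, $w^N_i$ is the harmonic extension of $R'_ig_n$, and the interpolation you sketch does give the mixed time regularity of $D_xw^N_i$.

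\textbf{The gap is Step 2, which for $k=1$ is the entire content of the lemma.} Your fallback of quoting Solonnikov is not available there. The paper notes that $k\ge 2$ is in \cite{S02} and $k=0$ in \cite{CJ2}, and it proves $k=1$ by hand precisely because no such result exists.

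Your primary route rests on the bound $|D^\be_xD^m_tL_{ij}|\le c\,t^{-\frac12-m}(|x|^2+t)^{-\frac{n+|\be|}2}$. This bound is false as soon as $\be$ contains an $x_n$-derivative. Take $1\le i,j\le n-1$ and differentiate the upper limit of $\int_0^{x_n}$ in the definition of $L_{ij}$. This produces the term $2\,\partial_{x_n}\Ga_1(x_n,t)\,D_{x_j}R'_i\Ga'(\cdot,t)(x')$, where $\Ga_1$ is the one-dimensional heat kernel. For $|x'|\gg\sqrt t$ and $x_n\sim\sqrt t$, this term has size $t^{-1}|x'|^{-n}$, while the rest of $D_{x_n}L_{ij}$ is $O(|x'|^{-n-2})$. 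Your bound gives only $t^{-\frac12}|x'|^{-n-1}$, which is too small by the factor $\sqrt t/|x'|$.

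So a normal derivative gains only $(x_n^2+t)^{-\frac12}$, not $(|x|^2+t)^{-\frac12}$. The Calder\'on--Zygmund argument you invoke (but do not carry out) cannot be run on the bounds you wrote. Yet $D_{x_n}w_{ij}$ and $D_{x_n}w^L_i$ are exactly what the $C^{1+\al,\frac12+\frac\al2}$ norm requires.

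Even your undifferentiated bound lacks the boundary-vanishing factor $x_n^\la t^{-\frac\la2}$ of \eqref{estimateso}. Used as an absolute bound, it makes $\int_0^\tau\int_{\Rn}|L_{ij}|\,dy'ds$ of order $\log(\tau/x_n^2)$. The time-H\"older estimate would then not be uniform up to $x_n=0$.

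The Fourier alternative does not produce what you claim. With $\xi$ dual to $x'$ one computes
\[
\widehat{L_{ij}}(\xi,x_n,t)=2\frac{\xi_i\xi_j}{|\xi|}\,e^{-t|\xi|^2}\int_0^{x_n}\partial_{z_n}\Ga_1(z_n,t)\,e^{-|\xi|(x_n-z_n)}\,dz_n .
\]
In physical space this is exactly \eqref{0819-1-2}: $w_{ij}=4D_{x_i}D_{x_j}\int_0^{x_n}\int_{\Rn}N(x-y)k_j(y,t)\,dy$. Up to sign, this is a double Riesz transform of $2(-\De')^{\frac12}\int_0^{x_n}e^{-(x_n-z_n)(-\De')^{\frac12}}k_j(\cdot,z_n,t)\,dz_n$, where $k_j$ is the caloric double-layer potential of $g_j$. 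That is an operator of order zero acting on $k_j$. It is not a Gaussian potential of $R'_iR'_jg_j$ plus a smoothing remainder, and its $x_n$-derivative is still the real issue.

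What is missing is the step the paper uses to handle it. By \eqref{lapl}, $\De_x\int_0^{x_n}\int_{\Rn}N(x-y)k_j(y,t)\,dy=\frac12k_j+I(D_{x_n}k_j)$, with $D_{x_i}I=\frac12R'_i$. This converts the normal derivatives into tangential derivatives of the same potential plus $k_j$ and Riesz transforms of $k_j$; this is \eqref{260409-2}. After that, $k_j$ is controlled by heat-potential estimates as in \eqref{1127-2}.

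The time regularity of order $\frac12+\frac\al2$ is then obtained separately from \eqref{estimateso}, using its factor $x_n^\la$. One splits $w^L_i(x,t)-w^L_i(x,\tau)=I_1+I_2$ and uses $g(\cdot,0)=0$ in $I_2$.

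Without this structural step, or a genuine reproduction of Solonnikov's anisotropic kernel analysis, your Step 2 is an assertion rather than a proof.
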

The proof of Lemma \ref{so} is provided in Subsection \ref{existenceofsolution2} in Appendix.

\begin{lemma}\label{theo1123}
Let  $w$ be a solution to  the system \eqref{Stokes-10-2} with  $ F \equiv 0$.   Assume that $D_{x'}R' g'$ and $D_{x'}  g'$ belong to  $L^\infty(\Rn;  C^{ \frac{\al}2}( 0, T))$. Then $w$ satisfies
\begin{align*} %\label{1212-1}
\begin{split}
|D_{x_n}w_i (x',0,t)|  & \leq c  t^\frac{\al}2 \Big(  \|  g\|_{ C^{1 +\al,\frac12 +\frac{\al}2}(q (  T))}  +  \| R' g_n \|_{ C^{1 +\al,\frac12 +\frac{\al}2}(q ( T))}     \Big).
\end{split}
\end{align*}
\end{lemma}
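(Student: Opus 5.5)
We prove Lemma \ref{theo1123} from the boundary formula \eqref{normal} of Lemma \ref{thoe1021-1}, which writes $D_{x_n}w_i(x',0,t)$ for $i\neq n$ as five terms. The idea is to bound each term separately by $t^{\frac{\al}{2}}$ times the stated norms. The zero initial data and the compatibility condition give $g(\cdot,0)=0$, and we extend $g$ by zero for $s<0$. The extended data then keep their Hölder norms on $(-\infty,T)$.

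\textbf{The third term.} The term $-2\sum_j D_{x_i}R'_j g_j(x',t)$ is handled first. Since $g_j(\cdot,0)=0$,
\[
|D_{x_i}R'_jg_j(x',t)|=|D_{x_i}R'_jg_j(x',t)-D_{x_i}R'_jg_j(x',0)|\le t^{\frac{\al}{2}}\|D_{x'}R'g'\|_{L^\infty(C^{\frac{\al}{2}})}.
\]
This norm is controlled by the hypothesis, and also by $\|R'g\|_{C^{1+\al,\frac12+\frac{\al}2}}$ up to commuting the Riesz transforms.

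\textbf{The first and fourth terms.} These are the spatial-difference terms: the $y'$-integral against $\Ga'(x'-y',t-s)$ of $h(y',s)-h(x',s)$, with $h=g_i$ or $h=R'_ig_n$. We estimate the difference in two ways.
\begin{itemize}
\item[(a)] Use $C^{1+\al}$ regularity in space after subtracting the first-order Taylor term, which integrates to zero against the symmetric Gaussian. This gives a bound of $(t-s)^{\frac{1+\al}{2}}$ times the norm.
\item[(b)] Use time Hölder regularity together with $h(\cdot,0)=0$. The quantity $h(y',s)-h(x',s)$ is bounded by $|h(y',s)-h(y',0)|+|h(x',s)-h(x',0)|\le c s^{\frac12+\frac{\al}{2}}$.
\end{itemize}
Interpolating (a) and (b), in the form $\min\{(t-s)^{\frac{1+\al}{2}},s^{\frac12+\frac\al2}\}$, gives
\[
\int_0^t (t-s)^{-\frac32}\min\{(t-s)^{\frac{1+\al}{2}},s^{\frac{1+\al}{2}}\}\,ds\le c\,t^{\frac{\al}{2}}.
\]
To check this, split at $s=t/2$. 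For $s>t/2$, use (a): the integrand is $(t-s)^{-1+\frac\al2}$, which integrates to $ct^{\frac\al2}$. For $s<t/2$, use (b): the integrand is $t^{-\frac32}s^{\frac{1+\al}{2}}$, which also gives $t^{\frac\al2}$.

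\textbf{The second and fifth terms.} These are the temporal-difference terms. Since $\int\Ga'\,dy'=1$, they reduce to
\[
\int_{-\infty}^t (t-s)^{-\frac32}\big(h(x',s)-h(x',t)\big)\,ds .
\]
For $s<0$ the difference is $h(x',0)-h(x',t)=-h(x',t)$, so that piece equals $-2t^{-\frac12}h(x',t)$. Since $|h(x',t)|\le ct^{\frac12+\frac\al2}$, this piece is $O(t^{\frac\al2})$. On $(0,t)$ we use $|h(x',s)-h(x',t)|\le c(t-s)^{\frac12+\frac\al2}$, and the integral is again $\le ct^{\frac\al2}$.

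Summing the five bounds gives the claim.

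\textbf{Main obstacle.} The main difficulty is the first and fourth terms. A spatial modulus alone yields only $O(1)$, and the $t^{\frac{\al}{2}}$ gain needs the interpolation with vanishing initial data described above. A second point to verify is that the Taylor-term cancellation is legitimate for $R'_ig_n$, which is not compactly supported. That cancellation only needs boundedness of $D_{x'}R'g_n$, which is part of the $C^{1+\al,\frac12+\frac\al2}$ norm, so no decay is required.
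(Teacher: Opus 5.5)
Your proof is correct and follows the paper's argument: the five terms of \eqref{normal} are bounded separately, the spatial-difference terms via the vanishing first moment of $\Ga'$ and $C^{1+\al}$ regularity, the temporal-difference terms by splitting at $s=0$ and using $g(\cdot,0)=0$ with $C^{\frac12+\frac\al2}$ time regularity, and the Riesz term $D_{x_i}R'_jg_j$ via its $C^{\frac\al2}$ time modulus and vanishing at $t=0$. The one difference is that your interpolation for the first and fourth terms is superfluous: bound (a) alone gives $\int_0^t (t-s)^{-1+\frac{\al}{2}}\,ds=\frac{2}{\al}t^{\frac{\al}{2}}$ on all of $(0,t)$, which is exactly what the paper does, so your remark that a spatial modulus alone yields only $O(1)$ is mistaken.
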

The proof of Lemma \ref{theo1123} is deferred to Subsection \ref{prooftheo1123}  in Appendix.

\begin{lemma}\label{prop1120}
For fixed  $ k \neq n$, let us define
\begin{equation*}%\label{1120-1}
V^1(x,t) =\int_0^t \int_{\R} D_{x_k} \Ga(x-y, t -s) f(y,s) \,  dyds.
\end{equation*}
For $ f \in  L^\infty(0, T; L^\infty(0, \infty; \dot C^\al(\Rn)))$ with $  0<\al <1$,  the following estimates hold:
\begin{equation*}%\label{1118-3-4}
\begin{aligned}
\|  R' V^1 \big|_{x_n =0}\|_{L^\infty(0, T; \dot C^\al(\Rn))} &  \leq cT^{\frac12 +\frac{\al}2} \|  f \|_{L^\infty(0, T; L^\infty(0, \infty; \dot C^\al(\Rn)))},\\
\| R'  V^1 \big|_{x_n =0}  \|_{L^\infty( \Rn; \dot C^{\frac12 +\frac{\al}2}(0, T))} &  \leq c   \| f \|_{L^\infty(0, T; L^\infty(0, \infty; \dot C^\al(\Rn)))},\\
\| R' V^1 \big|_{x_n =0} \|_{L^\infty (\Rn \times (0, T))} &  \leq cT^{\frac12 +\frac{\al}2} \| f \|_{L^\infty(0, T; L^\infty(0, \infty; \dot C^\al(\Rn)))}.
\end{aligned}
\end{equation*}
\end{lemma}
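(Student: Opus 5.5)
The three estimates come from an explicit formula for the boundary trace of $V^1$ in Fourier variables. The plan is to show that $R'V^1|_{x_n=0}$ is $f$ composed with a heat-type kernel whose parabolic homogeneity is $-1$ (one order of smoothing), and then apply standard Hölder estimates for such kernels.

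Since $k\neq n$, the derivative $D_{x_k}$ is tangential. Write $\Ga(x-y,t-s)=\Ga'(x'-y',t-s)\,\Ga_1(x_n-y_n,t-s)$, with $\Ga_1$ the one-dimensional heat kernel. Setting $x_n=0$ gives
\[
R'V^1(x',0,t)=\int_0^t\int_0^\infty \Ga_1(y_n,t-s)\int_{\Rn} D_{x_k}R'\Ga'(x'-y',t-s)\, f(y',y_n,s)\,dy'\,dy_n\,ds .
\]
The operator $D_{x_k}R'$ is a Fourier multiplier of homogeneity $1$. Hence $K(x',\tau):=D_{x_k}R'\Ga'(x',\tau)$ satisfies
\[
\int_{\Rn} K(x',\tau)\,dx'=0,\qquad |K(x',\tau)|\le c\,\tau^{-\frac12}\,(\tau^{\frac12}+|x'|)^{-(n-1)},\qquad |\na' K(x',\tau)|\le c\,\tau^{-\frac12}\,(\tau^{\frac12}+|x'|)^{-n}.
\]
These bounds follow from scaling $K(x',\tau)=\tau^{-\frac n2}K(x'/\sqrt\tau,1)$, together with $K(\cdot,1)$ being smooth and decaying like $|x'|^{-n}$. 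The $|x'|^{-n}$ decay is the only place the Riesz transform costs anything.

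The cancellation lets us replace $f(y',y_n,s)$ by $f(y',y_n,s)-f(x',y_n,s)$. Using $\int_0^\infty\Ga_1(y_n,\tau)\,dy_n=\tfrac12$, the inner spatial integral is then bounded by
\[
c\,\tau^{-\frac12}\int_{\Rn}(\tau^{\frac12}+|z'|)^{-(n-1)}|z'|^{\al}\,dz'\,\|f\|\le c\,\tau^{-\frac12+\frac{\al}2}\|f\|,
\]
where $\|f\|$ denotes the norm $\|f\|_{L^\infty(0,T;L^\infty(0,\infty;\dot C^\al(\Rn)))}$ of the statement. The integral converges because the kernel's moment bound $\int(\tau^{\frac12}+|z'|)^{-(n-1)}|z'|^\al\,dz'$ is finite once $\tau^{-\frac12}$ is combined with the $|x'|^{-n}$ decay, i.e.\ $\int |K||z'|^\al\lesssim\tau^{-\frac12+\frac\al2}$ since $\al<1$. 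Integrating in $s$ yields the $L^\infty$ bound with factor $T^{\frac12+\frac\al2}$.

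For the spatial $\dot C^\al$ bound, apply the previous estimate to $f(\cdot+h',\cdot,\cdot)-f$. This difference has sup norm at most $|h'|^\al\|f\|$, and a Hölder-norm estimate in the style of Campanato then gives $|h'|^\al\,T^{\frac12+\frac\al2}\|f\|$. An alternative is the standard splitting near/far at $|x'-y'|\sim|h'|$; the $\dot C^\al$ seminorm is translation invariant, so the first route works directly.

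The time estimate in $\dot C^{\frac12+\frac\al2}$ is the step I expect to be the main obstacle. Fix $t_1<t_2$ and $\de=t_2-t_1$, and split the $s$-integral into three pieces:
\begin{itemize}
\item On $(t_1-\de,t_2)$, use the pointwise bound $\tau^{-\frac12+\frac\al2}$ from above. This gives $c\,\de^{\frac12+\frac\al2}$.
\item On $(0,t_1-\de)$, write the difference of kernels as $\int_{t_1-s}^{t_2-s}\partial_\tau(\cdots)\,d\tau$. Since $\partial_\tau$ of the product kernel costs one factor $\tau^{-1}$ while preserving the cancellation, this is bounded by $\de\int_\de^\infty \tau^{-\frac32+\frac\al2}\,d\tau\le c\,\de^{\frac12+\frac\al2}$.
\item The $\partial_\tau$ of the $\Ga_1$ factor, integrated in $y_n$, is harmless because $\int_0^\infty|\partial_\tau\Ga_1|\,dy_n\le c\,\tau^{-1}$.
\end{itemize}
The delicate point is to make sure the cancellation in $x'$ survives after differentiating in $\tau$ and integrating over $y_n\in(0,\infty)$ against a $y_n$-dependent $f$. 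This holds because the $x'$-mean-zero property of $K$ and of $\partial_\tau K$ holds for every fixed $y_n$. Because the exponent satisfies $\frac12+\frac\al2<1$, no further correction term is needed, and the estimate is independent of $T$, as claimed.
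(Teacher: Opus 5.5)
Your route differs from the paper's. The paper quotes Proposition 4.4 of [CJ2] for $V^1|_{x_n=0}$ itself. It then transfers the bounds to $R'V^1$ using $R'V^1[f]=V^1[R'f]$ and the boundedness of $R'$ on $\dot C^{\al}(\Rn)$. You instead analyse the kernel $K=D_{x_k}R'\Ga'$ directly.

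For the $L^\infty$ bound and the time-H\"older bound your approach works, once the pointwise bound is stated correctly. Scaling together with the $|x'|^{-n}$ decay of $K(\cdot,1)$ gives $|K(x',\tau)|\le c(\tau^{\frac12}+|x'|)^{-n}$. The bound $\tau^{-\frac12}(\tau^{\frac12}+|x'|)^{-(n-1)}$ that you display is too weak: with it, the integral $\int|K||z'|^{\al}dz'$ you write diverges. Similarly $|\partial_\tau K(x',\tau)|\le c(\tau^{\frac12}+|x'|)^{-n-2}$. For $\al<1$ these give $\int|K||z'|^{\al}\lesssim\tau^{-\frac12+\frac{\al}{2}}$ and $\int|\partial_\tau K||z'|^{\al}\lesssim\tau^{-\frac32+\frac{\al}{2}}$. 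With these, your three-piece splitting in time is correct.

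The genuine gap is the spatial $\dot C^{\al}$ estimate. Applying your pointwise estimate to $f(\cdot+h')-f$ gains nothing. That estimate is controlled by the $\dot C^{\al}$ seminorm, and $[f(\cdot+h')-f]_{\dot C^{\al}}$ is only bounded by $2[f]_{\dot C^{\al}}$. The smallness $\sup|f(\cdot+h')-f|\le |h'|^{\al}\|f\|$ can only be used through $\|K(\cdot,\tau)\|_{L^1(\Rn)}=c\,\tau^{-\frac12}$. That gives
$|R'V^1(x'+h',0,t)-R'V^1(x',0,t)|\le c\,t^{\frac12}|h'|^{\al}\|f\|$, not $T^{\frac12+\frac{\al}{2}}|h'|^{\al}\|f\|$.

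No Campanato-type refinement can do better. Since $V^1[f(\lambda\cdot,\lambda^2\cdot)](x,t)=\lambda^{-1}V^1[f](\lambda x,\lambda^2 t)$, suppose a bound $\sup_{t<T}[R'V^1(\cdot,0,t)]_{\dot C^{\al}}\le cT^{\beta}\|f\|$ held for all $T$. Rescaling would give $\sup_{t<S}[R'V^1(\cdot,0,t)]_{\dot C^{\al}}\le cS^{\beta}\lambda^{1-2\beta}\|f\|$ for every $\lambda>0$. If $\beta>\frac12$, letting $\lambda\to\infty$ forces $R'V^1(\cdot,0,t)$ to be constant in $x'$.

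So the first line of the lemma cannot hold for small $T$. The correct target is $c\,T^{\frac12}\|f\|$, which is exactly what the $L^1$-kernel argument yields. This bound implies the stated one for $T\ge 1$, and also the factor $\max(1,T^{\frac12+\frac{\al}{2}})$ used in Lemma~\ref{theo1118-1}. The same caveat applies to the $\dot C^{\al}$ bound the paper imports from [CJ2].
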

 The proof of Lemma \ref{prop1120} is referred to  Subsection \ref{proofprop1120} in Appendix.

\begin{lemma}\label{theo1118-1}
Let $F \in C^{ 2 +\al,1 +\frac{\al}2 }(Q_+( T))$ and $ g, \, R' g_n \in C^{2+\al, 1 +\frac{\al}2}(q ( T))$ satisfy the compatibility conditions $ g(x',0) = D_t g(x',0) =0$. Then, there exists a solution $v $ to   \eqref{Stokes-10-2} such that 
 \begin{equation*}
\begin{aligned}
\| v\|_{    C^{ 2 +\al, 1 +\frac{\al}2}  (Q_+( T) )} &  \leq c \Big( \| g\|_{   C^{ 2 +\al, 1 +\frac{\al}2} (q ( T)) }  
+ \|R' g_n \|_{  C^{ 2 +\al, 1 +\frac{\al}2} (  q (  T)) }  +  \max(1, T^{\frac12 +\frac{\al}2})  \| F\|_{  C^{2 + \al, 1+ \frac{\al}2} (Q_+(T))} \Big),\\
| D_{x_n}  v(x', 0,t)| &  \leq c t^{\frac{\al}2} \Big( \| g\|_{L^\infty(0, t; C^{1 + \al} (\Rn))} +  \|R' g_n \|_{L^\infty(0, t; C^{1 + \al} (\Rn))} 
+ \| F\|_{L^\infty(0, t; C^{1 + \al} (\R_+))}   \Big).
\end{aligned}
\end{equation*}
\end{lemma}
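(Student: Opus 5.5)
We split the problem by linearity: $v = w + V$. Here $w$ solves \eqref{Stokes-10-2} with $F\equiv 0$ and boundary data $g$. The function $V$ solves the Stokes system with source ${\rm div}\,F$, zero boundary data and zero initial data.

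For $w$, both estimates come from earlier lemmas. The Hölder bound in $C^{2+\al,1+\frac{\al}2}$ is Lemma \ref{so} with $k=2$; the compatibility conditions $g(x',0)=D_tg(x',0)=0$ are exactly those required there. The boundary bound on $D_{x_n}w$ is Lemma \ref{theo1123}. To get the weaker $L^\infty(0,t;C^{1+\al})$ norms in place of the full anisotropic norm, we restrict the data to $(0,t)$ and use causality of the representation \eqref{rep-bvp-stokes-w}: $w$ on $(0,t)$ depends only on $g$ on $(0,t)$. The time-Hölder part of the norm is then controlled because the factor $t^{\al/2}$ comes from the kernel in the proof of Lemma \ref{theo1123}.

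For $V$, we write it explicitly. Let $V^0$ be the solution of the whole-space Stokes problem with odd/even extension of $F$, i.e.\ $V^0=\mathbb P\,\Ga * {\rm div}\,F$ built from the heat kernel and the Leray projection. Then we correct the nonzero trace $V^0|_{x_n=0}$ by solving a boundary value problem with data $-V^0|_{x_n=0}$ and applying Lemma \ref{so} to it. The trace $V^0|_{x_n=0}$ consists of terms of the form $\int_0^t\int D_{x_k}\Ga(x-y,t-s)F(y,s)\,dyds$, which is precisely $V^1$ of Lemma \ref{prop1120}. We also use the corresponding normal-derivative and pressure-correction terms, which reduce to the same type after using \eqref{1006-3}-style Riesz-transform identities.

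Lemma \ref{prop1120} controls $R'V^1|_{x_n=0}$ in $C^{\al}$ in space, in $C^{\frac12+\frac{\al}2}$ in time, and in $L^\infty$, with factors $T^{\frac12+\frac\al2}$. Applying it to derivatives of $F$ up to second order, together with standard heat-potential Schauder estimates for $V^0$ itself, gives the bound on the trace data and its Riesz transform in $C^{2+\al,1+\frac\al2}(q(T))$ by $\max(1,T^{\frac12+\frac\al2})\|F\|$. The compatibility conditions for the corrector hold because $V^0(\cdot,0)=0$ and $D_tV^0(\cdot,0)=0$; the latter follows since ${\rm div}F$ enters via a time integral and $F$ is bounded. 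Summing the estimates for $w$, $V^0$ and the corrector yields the first inequality.

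For the second inequality, $D_{x_n}V(x',0,t)$ is bounded in the same way. The term from $V^0$ is a heat potential of ${\rm div}F$ differentiated once more. Using $\int D_{x_k}\Ga=0$ to subtract $F(x,s)$, this gives $\int_0^t (t-s)^{-1+\frac{\al}2}\,ds\,\|F\|_{C^{1+\al}} \lesssim t^{\al/2}\|F\|$. The corrector's normal derivative is handled by Lemma \ref{theo1123} applied to the trace data, whose $C^{1+\al}$ norms on $(0,t)$ are controlled by those of $F$ on $(0,t)$.

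The main obstacle I expect is the nonlocal step: bounding the Riesz transform of the trace $R'(V^0|_{x_n=0})$, including its normal component, in the full $C^{2+\al,1+\frac{\al}2}$ norm. The tools for this are Lemma \ref{prop1120} applied to second derivatives of $F$, and the compatibility of the time-Hölder exponent $\frac12+\frac\al2$ with one extra time derivative. This is exactly where the factor $\max(1,T^{\frac12+\frac\al2})$ arises.
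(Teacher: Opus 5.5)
Your decomposition is the paper's. You extend $F$, solve the whole-space problem, remove its trace by a boundary-value problem estimated with Lemma \ref{so}, control $R'$ of the normal trace via Lemma \ref{prop1120}, and get the boundary bound from Lemma \ref{theo1123} plus a pointwise bound on $D_xV$. The paper just merges your $w$ and your corrector into one problem with data $g-V|_{x_n=0}$, and your heat-kernel bound for $D_{x_n}V^0$ is an acceptable substitute for its $H^1$--$BMO$ estimate \eqref{251208-1}. However, two of your justifications are wrong, and each marks a place where the statement itself must be adjusted.

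\textbf{Compatibility of the corrector.} $D_tV^0(\cdot,0)$ does not vanish. Since $\partial_t\int_0^t e^{(t-s)\Delta}f(s)\,ds=f(t)+\int_0^t\Delta e^{(t-s)\Delta}f(s)\,ds$, you get $D_tV^0(\cdot,0)=\mathbb{P}\,{\rm div}\,\tilde F(\cdot,0)$, as the first term of $D_tV_n$ in \eqref{1118-4} shows. So for general $F$ the data $-V^0|_{x_n=0}$ violate the first-order compatibility required in Lemma \ref{so}.

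This is intrinsic, not a defect of the route. If $v\in C^{2+\alpha,1+\frac{\alpha}{2}}$, then $v_t(\cdot,0)={\rm div}\,F(\cdot,0)-\nabla\pi(\cdot,0)$ must be divergence free and vanish on $\{x_n=0\}$, because $D_tg(\cdot,0)=0$. The Neumann problem fixing $\pi(\cdot,0)$ leaves the tangential trace uncontrolled, so a generic $F(\cdot,0)$ is incompatible. The paper's proof passes over this too. The fix is to assume $F(\cdot,0)=0$, which holds in Proposition \ref{theorem1118-3} since $w$ and $v^m$ vanish at $t=0$.

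\textbf{Weak norms in the second estimate.} Causality lets you replace $(0,T)$ by $(0,t)$, but it does not remove the time-H\"older seminorm of $g$, and no kernel factor can do that. After the $y'$-integration, the second term of \eqref{normal} is $\frac{1}{2\sqrt{\pi}}\int_{-\infty}^t (t-s)^{-3/2}\big(g_i(x',s)-g_i(x',t)\big)\,ds$, which is a half time-derivative of $g_i$.

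Take $F=0$ and $g=\chi(x')\,\eta\big((t-t_0)/\epsilon\big)\mathbf{e}_i$, with $0\le\chi\in C^\infty_c(\mathbb{R}^{n-1})$, $t_0>0$, and $\eta$ a smooth step equal to $0$ on $(-\infty,0]$ and $1$ on $[1,\infty)$. At $t=t_0+\epsilon$:
\begin{itemize}
\item this term is at most $-\pi^{-1/2}\chi(x')\epsilon^{-1/2}$;
\item the other terms of \eqref{normal} are $O(1)$;
\item $\|g\|_{L^\infty(0,t;C^{1+\alpha})}$ is independent of $\epsilon$.
\end{itemize}
Hence, for the solution given by the representation formula, the inequality is false as written. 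What your argument and the paper's (via Lemma \ref{theo1123}) actually prove is the bound with $\|g\|_{C^{1+\alpha,\frac12+\frac{\alpha}{2}}(q(t))}+\|R'g_n\|_{C^{1+\alpha,\frac12+\frac{\alpha}{2}}(q(t))}$ in place of the $g$-terms. For the $F$-part the weak norm is legitimate, because the time regularity of the trace of $V^0$ comes from the spatial regularity of $F$ (second line of Lemma \ref{prop1120}). Since $g=0$ in Proposition \ref{theorem1118-3}, the corrected version is all that is needed.

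\textbf{Smaller points.} An odd or even reflection of $F$ does not preserve $C^{1+\alpha}$ or $C^{2+\alpha}$ unless $F$, respectively $\partial_{x_n}F$, vanishes on $\{x_n=0\}$. In the application $F|_{x_n=0}=a^2g\otimes g\neq0$, so you need a higher-order (Hestenes--Seeley type) extension.

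Also, Lemma \ref{prop1120} concerns only $D_{x_k}\Ga$ with $k\neq n$, so it does not apply directly to the trace of $V^0$. For the normal component you first need the representation \eqref{1118-4} of $V_n$ through tangential derivatives of $\Ga$ only. That representation follows from $1+R_n^2=-\sum_{l<n}R_l^2$ and $D_{x_n}R_l=D_{x_l}R_n$, and it is the concrete content of the ``nonlocal step'' you identified.
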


\begin{proof}
Let $\tilde F  \in C^{\al, \frac{\al}2} (Q ( T))$ be an extension of $F$  to the whole space $Q(T)$ such that  $ \tilde F|_{\R_+ \times (0, T)} =F$ and  $\| \tilde F\|_{C^{\al, \frac{\al}2} (Q( T))} \leq c \|  F\|_{C^{\al, \frac{\al}2} (Q_+( T))}$.
Consider the following system defined in whole-space:
\begin{align*}%\label{Stokes-10-3}
\left\{\begin{array}{l}\vspace{2mm}
 V_t - \De V+  \na Q ={\rm div } \tilde F, \quad Q( \infty),\\
\vspace{2mm}
\mbox{div } V =0 \quad Q( \infty),\\
  V|_{t =0} =0. 
\end{array}
\right.
\end{align*}
The solution $V$ is given by the formula:
\begin{align*}
V_i (x,t) &  = \sum_{1 \leq k \leq n} \int_0^t \int_{\R} D_{x_k} \Ga (x-y, t-s) \big( \de_{ik} + R_i R_k   \big) \tilde F_{ik}(y, s) dyds, \quad 1 \leq i \leq n.
\end{align*}
It is well-known that 
\begin{align*}%\label{1118-5}
\| V \|_{\dot C^{2+\al, 1+\frac{\al}2} (Q ( \infty))} &\leq c \| F\|_{L^\infty(0, \infty; \dot C^{ 1 + \al} (\R \times (0, \infty))}.
\end{align*}

Recall that  $D_x \Ga(x,t)$ belongs to the  Hardy space $H^1(\R)$ with a norm bound of $c t^{-1/2}$.   Since the Riesz transform $R$ is bounded on $BMO(\mathbb{R}^n)$, and $BMO$ is the dual of $H^1$, we have
\begin{equation}\label{251208-1}
\begin{aligned}
| D_x V(t, x)| &  \leq c \int_0^t \|D_x \Ga (t -s )\|_{H^1(\R)} \Big(  \| D_x R \tilde F (s) \|_{BMO (\R)} +  \| D_x \tilde F (s) \|_{BMO (\R)} \Big)  ds\\
& \leq c \int_0^t (t -s)^{-\frac12  }  \| D_x \tilde F (s) \|_{L^\infty (\R)} ds\\
& \leq c t^{\frac12 } \| D_x \tilde F  \|_{L^\infty (Q ( t)) }.
\end{aligned}
\end{equation}
Hence,  $
\|D_x V\|_{L^\infty (Q (  T)) } 
 \leq c  T^{\frac12 } \|D_x \tilde  F\|_{L^\infty (Q ( T))   }$. Similarly, $
\| D_x^2 V\|_{L^\infty (Q ( T))  } \leq c T^\frac12 \| D_x^2 \tilde F\|_{L^\infty (Q (  T)) }$.

Following the proof of Proposition   4.4 in  \cite{CJ2},   $V_n$  and its derivatives satisfy:
\begin{equation}\label{1118-4}
\begin{aligned}
 V_n (x,t) & = \sum_{l \neq n} \int_0^t \int_{\R} D_{x_l} \Ga (x-y, t-s)  \big( \de_{nl} + R_n R_l  \big) \tilde F_{nl}(y, s) dyds,\\
 D^2_x V_n (x,t) & = \sum_{l \neq n} \int_0^t \int_{\R} D_{x_l} \Ga (x-y, t-s)  D^2_y \big( \de_{nl} + R_n R_l  \big) \tilde  F_{nl}(y, s) dyds,\\
D_t V_n (x,t)  & = \sum_{ l \neq n}   D_{x_l}  \big( \de_{nl} + R_n R_l   \big) \tilde  F_{nl}(x, t)\\
& \quad +  \sum_{ k \neq n } \int_0^t \int_{\R} D_{x_k} \Ga (x-y, t-s)  \De_y  \big( \de_{nk} + R_n R_k   \big) \tilde  F_{nk}(y, s) dyds.
\end{aligned}
\end{equation}
Note that 
\begin{align*}
\| f \|_{L^\infty(0, T; L^\infty(0, \infty; \dot C^\al(\Rn)))} \leq c \| f \|_{L^\infty(0, T; L^\infty( \dot C^\al(\R_+)))} \leq c \| f \|_{  \dot C^{\al,\frac{\al}2}(Q_+(T))}.
\end{align*}
Hence, from \eqref{1118-4} and  Lemma \ref{prop1120}, for  $    0<\al <1$,  we have 
\begin{align*}
\| R' V_n|_{x_n =0}\|_{C^{2 + \al,  1 + \frac{\al}2} (q ( T))} &  \leq c  \max( 1, T^{\frac12 +\frac{\al}2} )  \|  \tilde F \|_{C^{2 + \al,  1 + \frac{\al}2} (Q_+ ( T))}.
\end{align*}

Let $ (w, p) $ be a solution of equations \eqref{Stokes-10} with boundary data $ g - V|_{x_n =0}$.  Using Lemma \ref{so},  $w$ satisfies  the following estimates:
\begin{align*}
\begin{split}
\| w\|_{    C^{ 2 +\al,1+\frac{\al}2}  (Q_+(T) )} &  \leq c \Big( \| g - V|_{x_n =0}\|_{   C^{2+\al, 1+\frac{\al}2} (q ( T)) }  
+ \|R' \big( g_n - V|_{x_n =0}  \big) \|_{  C^{2 +\al, 1 +\frac{\al}2} (q (  T)) }  \Big)\\
& \leq c \Big(   \| g  \|_{   C^{2 +\al, 1 +\frac{\al}2} (q (  T)) }  
+ \|R'   g_n  \|_{  C^{2+ \al,1 +\frac{\al}2} (q (  T)) }   +  \max(1, T^{\frac12 +\frac{\al}2})  \| F\|_{  C^{2 + \al, 1+ \frac{\al}2} (Q_+(T))} \Big).
\end{split}
\end{align*}

From Theorem \ref{theo1123} and \eqref{251208-1}, we have 
\begin{align*}
| D_{x'}  w(x', 0,t)|  &  \leq c t^{\frac{\al}2} \Big( \| g\|_{L^\infty(0, t; C^{1 + \al} (\Rn))} +  \|R' g_n \|_{L^\infty(0, t; C^{1 + \al} (\Rn))} 
+   \| F\|_{L^\infty(0, t; C^{1 + \al} (\R_+))}   \Big).
\end{align*}
Finally, setting  $ v = V + w$  and applying Lemma \ref{theo1123} and \eqref{251208-1}, we conclude the desired estimates. This completes the proof of Lemma \ref{theo1118-1}.
\end{proof}

 Next we study the perturbed Navier-Stokes equations near the solution of the Stokes system.
 
%\section{Holder continuity of solutions of Navier-Stokes equations}
%\setcounter{equation}{0}

Let $g, \, R' g_n \in C^{2 +\al, 1 +\frac{\al}2}(q(T))$ and let $w$ be a solution to the Stokes system  with boundary data $ag$, where $a >0$ is determined later. 
We define: 
\begin{align*}
M_0 =\| g\|_{ C^{2 +\al, 1 +\frac{\al}2}(q(T)) } + \| R'g_n \|_{ C^{2+\al, 1 +\frac{\al}2}(q(T)) }.
\end{align*}  

We consider the following nonlinear system for $(v, \pi)$:
\begin{align} \label{CCK-Feb7-20}
\begin{split}
&v_t-\Delta v+\nabla \pi =-{\rm div}\,\left(v\otimes v+v\otimes w+w\otimes v+w\otimes w \right),\quad {\rm div} \, v =0, \quad Q_+(T),\\
& v (x,0)=0, \quad v (x', 0,t)=0.
\end{split}
\end{align}
The solution $u$ to the Navier-Stokes equations with boundary data $ag$ is then decomposed as $u = w + v$.

In next proposition, we construct regular the solution $v$  in \eqref{CCK-Feb7-20}, provided that $a$ is sufficiently small. 
\begin{proposition}\label{theorem1118-3}
Let $0 < \ep_0 < 1$. 
If $a M_0< c \ep_0$, then the system \eqref{CCK-Feb7-20} has a unique solution $(v, \pi)$ satisfying the following estimates:
\begin{align}\label{251209-7}
\| v\|_{ C^{2 +\al, 1 +\frac{\al}2} ( Q_+(T)) } & < \ep_0 a M_0,
\end{align}
\begin{align}\label{251208-6}
| D_{x'} \pi (x', 0, t)| & \leq c \ep_0 a M_0 t^{\frac{\al}2}.
\end{align}
\end{proposition}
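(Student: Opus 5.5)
We will construct $v$ by a fixed-point argument in the closed ball
\[
X=\{v\in C^{2+\al,1+\frac{\al}2}(Q_+(T)):\ {\rm div}\,v=0,\ v|_{t=0}=0,\ v|_{x_n=0}=0,\ \|v\|_{C^{2+\al,1+\frac{\al}2}(Q_+(T))}\le \ep_0 aM_0\}.
\]
Given $\tilde v\in X$, let $\Phi(\tilde v)=v$ be the solution of \eqref{Stokes-10-2} with zero boundary data ($g\equiv0$) and forcing $F=-(\tilde v\otimes\tilde v+\tilde v\otimes w+w\otimes\tilde v+w\otimes w)$. This map is furnished by Lemma \ref{theo1118-1}.

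\textbf{Self-map.} First we bound $w$. Since $w$ solves the Stokes system with boundary data $ag$, Lemma \ref{so} with $k=2$ gives $\|w\|_{C^{2+\al,1+\frac{\al}2}}\le c\,aM_0$. The compatibility conditions hold because $\phi(0)=\phi'(0)=0$ in the regime of Theorem \ref{maintheorem(NS)}.

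Next we bound $F$. Since $C^{2+\al,1+\frac{\al}2}(Q_+(T))$ is a Banach algebra, with norm bounds on products coming from the Leibniz rule on bounded domains in time,
\[
\|F\|\le c\big(\|\tilde v\|^2+2\|\tilde v\|\,\|w\|+\|w\|^2\big)\le c\big((\ep_0aM_0)^2+\ep_0(aM_0)^2+(aM_0)^2\big).
\]
Feeding this into the first estimate of Lemma \ref{theo1118-1}, and noting $g=0$, we obtain
\[
\|\Phi(\tilde v)\|\le c_T (aM_0)^2 .
\]
This is at most $\ep_0 aM_0$ provided $c_T\,aM_0\le\ep_0$, which is exactly the smallness hypothesis $aM_0<c\ep_0$ with the constant $c$ absorbing $c_T=c\max(1,T^{\frac12+\frac{\al}2})$. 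Here it matters that the term $w\otimes w$ is quadratic in $a$, so it is small relative to $\ep_0 aM_0$.

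\textbf{Contraction.} The difference $F(\tilde v_1)-F(\tilde v_2)$ factors as
\[
(\tilde v_1-\tilde v_2)\otimes\tilde v_1+\tilde v_2\otimes(\tilde v_1-\tilde v_2)+(\tilde v_1-\tilde v_2)\otimes w+w\otimes(\tilde v_1-\tilde v_2).
\]
Hence
\[
\|\Phi(\tilde v_1)-\Phi(\tilde v_2)\|\le c_T\,aM_0\,\|\tilde v_1-\tilde v_2\|\le\tfrac12\|\tilde v_1-\tilde v_2\|
\]
after shrinking $c$. Banach's fixed point theorem then gives existence and uniqueness in $X$, and therefore \eqref{251209-7}. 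The pressure $\pi$ is recovered from the equation.

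\textbf{Pressure bound \eqref{251208-6}.} This is the delicate step. On the boundary $v=0$, so $\pi$'s tangential gradient is tied to the normal derivatives of $v$ through the equation at $x_n=0$:
\[
D_{x'}\pi=\Delta v'-D_{x'}\!\cdot(\text{nonlinear})=D_{x_n}^2v'+\text{(forcing terms)} .
\]
Rather than work with this identity, I would first try using the representation of the pressure for \eqref{Stokes-10-2} with $g=0$ from the whole-space decomposition in the proof of Lemma \ref{theo1118-1}. That decomposition splits $\pi=Q+p$, where $Q$ comes from $V$ and $p$ is the boundary-corrector pressure with boundary data $-V|_{x_n=0}$.

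For $Q$, $D_{x'}Q$ is a Riesz-transformed $D_x\tilde F$. For the corrector, we use the formula \eqref{representationpressure-1} with $g=-V|_{x_n=0}$. Each term involves either $V|_{x_n=0}$ at time $t$, which by \eqref{251208-1} is $O(t^{\frac12})$, or the Hölder-in-time difference quotient $(V(s)-V(t))/(t-s)^{3/2}$. Since $V(\cdot,0)=0$, that difference quotient yields $t^{\frac{\al}2}\|V\|_{C^{1+\al,\frac12+\frac{\al}2}}$.

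Combining these with the $F$ bound $\|F\|\le c\,\ep_0 aM_0$ (valid under $aM_0\le c\ep_0$) gives \eqref{251208-6}. The main obstacle is controlling the singular time integral near $s=t$ while extracting the factor $t^{\frac{\al}2}$. I would handle it in the same way as Lemma \ref{theo1123}, splitting the integral at $s=t/2$ and using the vanishing initial data.
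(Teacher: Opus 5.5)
Your construction of $v$ is correct and is essentially the paper's argument. The paper runs the Picard iteration $v^{m+1}=\Phi(v^m)$, $v^0=0$, using the same two consequences of Lemma \ref{theo1118-1}: the bound $c(T)(aM_0)^2(1+\ep_0)^2\le\ep_0aM_0$ and the contraction factor $c(T)(2\ep_0+1)aM_0\le\frac12$. That is Banach's theorem on your ball $X$.

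The gap is in the pressure bound \eqref{251208-6}. You never actually carry out the decomposition $\pi=Q+p$, and your description of what must be controlled is wrong. Differentiating \eqref{representationpressure-1} tangentially at $x_n=0$ with $g=-V|_{x_n=0}$ does not leave only ``$V|_{x_n=0}$ at time $t$'' plus a first-order time difference quotient:
\begin{itemize}
\item The first group produces $D_{x_i}D_{x_j}V_j|_{x_n=0}$ and $D_{x_i}\Delta'\big(N(\cdot,0)*'V_n\big)$, which are second-order tangential derivatives.
\item The term $-2D_tN*'g_n$ produces $R'_i\partial_tV_n|_{x_n=0}$, since $D_{x_i}\int N(x'-y',0)h(y')\,dy'=\frac12R'_ih$.
\item In the last term the extra $D_{x'}$ makes the time kernel $(t-s)^{-2}$, which is not integrable against a $C^{\frac12+\frac{\al}{2}}$ difference. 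Moving the derivative onto the data instead requires $D_{x'}V|_{x_n=0}$ and $D_{x'}R'V|_{x_n=0}$ to be $C^{\frac12+\frac{\al}{2}}$ in time.
\end{itemize}
So neither \eqref{251208-1} (a bound on $D_xV$) nor $\|V\|_{C^{1+\al,\frac12+\frac{\al}{2}}}$ is the relevant quantity. The estimate $t^{\frac{\al}{2}}\|V\|_{C^{1+\al,\frac12+\frac{\al}{2}}}$ you describe bounds $p$, not $D_{x'}p$. Likewise, saying $D_{x'}Q$ is a Riesz transform of $D_x\tilde F$ gives no pointwise bound, because $R_jR_k$ is not bounded on $L^\infty$.

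The missing idea is that the boundary identity you wrote down and then set aside already proves \eqref{251208-6}. Since $v\equiv0$ on $\{x_n=0\}$ for all $t$, the quantities $\partial_tv$, $D_{x'}v$ and $D^2_{x'}v$ also vanish there. Using ${\rm div}\,v={\rm div}\,w=0$ and $w|_{x_n=0}=a\psi\phi\,{\bf e}_n$, the trace of the nonlinear term is $w_n(D_{x_n}v'+D_{x_n}w')$. Hence
\[
D_{x'}\pi(x',0,t)=D^2_{x_n}v'(x',0,t)-a\psi(x')\phi(t)\big(D_{x_n}v'+D_{x_n}w'\big)(x',0,t).
\]
The last term vanishes for $|x'|\ge1$, and in any case it is $O((aM_0)^2t^{\frac12+\frac{\al}{2}})$. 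For the first term, $v(\cdot,0)=0$ forces $D_x^2v(\cdot,0)=0$. Therefore $|D^2_{x_n}v(x',0,t)|\le t^{\frac{\al}{2}}\|D_x^2v\|_{C^{\al,\frac{\al}{2}}}\le\ep_0aM_0t^{\frac{\al}{2}}$ follows directly from \eqref{251209-7}. This is the paper's argument, and it needs no representation formula for the pressure.
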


\begin{proof}
First, standard Stokes estimates imply $\|w\|_{C^{2+\alpha, 1+\frac{\alpha}{2}}} \leq c a M_0$. By absorbing the constant into $M_0$, we assume $\|w\| \leq a M_0$. We define an iterative scheme: for $m \geq 0$, let $v^{m+1}$ solve the linear Stokes system with the nonlinear terms of the previous step as the external force:
\begin{align*}
&v^{m+1}_t-\Delta v^{m+1}+\nabla \pi^{m+1}=-{\rm div}\,\left(v^{m}\otimes v^{m}+v^{m}\otimes w+w\otimes v^{m}+w\otimes w \right), \qquad {\rm div} \, v^{m+1} =0
\end{align*}
with homogeneous initial and boundary data, i.e., $v^{m+1}(x,0)=0$ and $v^{m+1}(x', 0,t)=0$.

Setting $v^0 = 0$, it follows from Lemma  \ref{theo1118-1} that: 
\begin{align*}%\label{251213-1}
\| v^1\|_{ C^{2+\al, 1 +\frac{\al}2} ( Q_+ (T) ) } \leq c \max(1, T^{\frac12 +\frac{\al}2}) \| w\|^2_{ C^{2 + \al, 1+ \frac{\al}2} ( Q_+ (T) ) } \leq c(T) a^2 M_0^2.
\end{align*}

Choosing $a$ such that $2c(T) a M_0 < \epsilon_0 < 1/3$, we obtain $\| v^1 \|_{ C^{2+\al, 1 +\frac{\al}2} ( Q_+ (T) ) } \leq \frac{1}{2} \epsilon_0 a M_0 < \epsilon_0 a M_0$. By induction, suppose $\| v^m \| \leq \epsilon_0 a M_0$. Then, from   Lemma \ref{theo1118-1}, we have  
\begin{align*}
\begin{split}
\| v^{m+1}\|_{ C^{2+\al, 1 +\frac{\al}2} ( Q_+ (T) ) } 
& \leq c(T) \Big( a^2 M_0^2 + 2a M_0 \| v^m \|_{ C^{2 +\al, 1 +\frac{\al}2} ( Q_+ (T) ) } + \| v^m\|^2_{ C^{2 +\al, 1 +\frac{\al}2} ( Q_+ (T) ) } \Big)\\
& \leq c(T) \Big( a^2 M_0^2 + 2\ep_0 a^2 M_0^2 + \ep_0^2 a^2 M_0^2 \Big)\\
& = c(T) a^2 M_0^2 (1 + \ep_0)^2.
\end{split}
\end{align*}
Since $(1+\epsilon_0)^2 < 2$ for $\epsilon_0 < 1/3$, our choice of $a$ ensures $\| v^{m+1} \|_{ C^{2+\al, 1 +\frac{\al}2} ( Q_+ (T) ) } \leq \epsilon_0 a M_0$.

Next, we denote $V^{m+1}:=v^{m+1}-v^{m}$ and $Q^{m+1}:=q^{m+1}-q^{m}$ for $m\ge 1$. We then see that $(V^{m+1}, Q^{m+1})$ solves
\begin{align*}
\begin{split}
& V^{m+1}_t-\Delta V^{m+1}+\nabla Q^{m+1}=-{\rm div}\,\left(V^{m}\otimes v^{m}+v^{m-1}\otimes V^{m}+V^{m}\otimes w+w\otimes V^{m}\right),\\
& {\rm div} \, V^{m+1} =0,
\end{split}
\end{align*}
with homogeneous initial and boundary data, i.e., $V^{m+1}(x,0)=0$ and $V^{m+1}(x',0,t)=0$. 

Then, we obtain
\begin{align*}%\label{0531-1-3}
\begin{split}
\| V^{m+1}\|_{ C^{2 +\al, 1 +\frac{\al}2} ( Q_+ (T) ) } 
  \leq & c(T) \Big( \| V^m\|_{ C^{2 +\al, 1+\frac{\al}2} ( Q_+ (T) ) } \| v^m \|_{ C^{2 +\al, 1+\frac{\al}2} ( Q_+ (T) ) } \\
  &\quad + \| V^{m}\|_{ C^{2 +\al, 1 +\frac{\al}2} ( Q_+ (T) ) } \| v^{m-1} \|_{ C^{2+\al, 1+\frac{\al}2} ( Q_+ (T) ) }\\
  & \quad + \| V^{m}\|_{ C^{2 +\al, 1 +\frac{\al}2} ( Q_+ (T) ) } \|w \|_{ C^{2 +\al, 1 +\frac{\al}2} ( Q_+ (T) ) }\Big)\\
 \leq & c(T) \Big(2\ep_0 a M_0 + a M_0 \Big) \| V^m\|_{ C^{2 +\al, 1 +\frac{\al}2} ( Q_+ (T) ) }\\
  \leq &\frac12 \| V^m\|_{ C^{2 +\al, 1 +\frac{\al}2} ( Q_+ (T) ) }.
\end{split}
\end{align*}
 
Therefore, there exists a limit $v \in C^{2 +\al, 1 +\frac{\al}2} ( Q_+ (T) ) $ such that $v^m \rightarrow v$ in $C^{2 +\al, 1 +\frac{\al}2} ( Q_+ (T) )$.   

Then, $v$ becomes a strong solution of the Navier-Stokes equations \eqref{CCK-Feb7-20} with boundary data zero, satisfying
\begin{align*}%\label{1123-5}
\| v\|_{ C^{2+\al, 1 +\frac{\al}2}( Q_+ (T) ) } & \leq \ep_0 a M_0. 
\end{align*}

Finally, for the pressure estimate, taking the trace on the boundary $\{x_n = 0\}$ where $v=0$, and utilizing the divergence-free condition ${\rm div } \, w = 0$, we have
\begin{align*} 
| D_{x'} \pi (x', 0, t)| \leq c | D_{x_n}^2 v(x', 0, t) | \leq c \ep_0 a M_0 t^{\frac{\al}2}.
\end{align*}
This completes the proof of Proposition \ref{theorem1118-3}.
\end{proof}

%\section{Proof of Theorem \ref{maintheorem(NS)}}
%\setcounter{equation}{0}

Now we are ready to present the proof of Theorem \ref{maintheorem(NS)}.

\begin{pfthm-NS}
\\
Since $u = w + v$, it follows from Lemma  \ref{so} and \eqref{251209-7} in Lemma  \ref{theorem1118-3} that $u \in C^{2+\al, 1 +\frac{\al}2}(Q_+ (T))$ and    so $D_x v \in C^{1+\al, \frac12+\frac{\al}2}(Q_+ (T))$. Because $D_{x_n} v(x', 0, 0) = 0$, by Proposition \ref{theorem1118-3},  we have
\begin{align}\label{260410-3}
|D_{x_n} v_i(x', 0, t)| \leq t^{\frac12 +\frac{\al}2} \|v\|_{C^{2+\al, 1 +\frac{\al}2} (Q_+ (T))} \leq \ep_0 a M_0 t^{\frac12 +\frac{\al}2}.
\end{align}
From \eqref{shortg},   \eqref{1026-6-1}, $\eqref{asumption260410}_1$  and Lemma \ref{lemma1116-1}, for $ 0 < t< 1$ and for $ c_* < |x'|$,  we have 
\begin{align} \label{260410-4} 
\begin{split}
D_{x_n} w_i (x', 0,t)  & =  
   - c_n a  \int_0^t    \frac{ \phi (s)}{(t -s)^\frac32}       f(x', t-s)  +  4 c_n a  R'_i \psi (x') \int_{0}^t   \phi' (s)     (t-s)^{-\frac12}  ds\\
& \geq  - c_1 a  |x'|^{-n -1}  t^\frac32 + c_2 a  |x'|^{-n+1} t^{\frac12 +\frac{\al}2}\\
& \geq  c  a |x'|^{-n+1} t^{\frac12 +\frac{\al}2}.
\end{split}
\end{align}
 Combining \eqref{260410-3} and \eqref{260410-4}, we have 
\begin{align}
\label{1118-7} D_{x_n} u_i (x', 0, t) &\geq  a t^{\frac12 + \frac{\al}2} \Big( c_1  |x'|^{-n+1} -   d \ep_0 M_0\Big), \quad 0 < t < 1.
\end{align}
%{\color{red}{ $\ep_0 < \frac{c_1   c_{**}^{-n +1}}{dM_0}$   }}
On the other hand, using   \eqref{260319-2} and \eqref{251209-7}, we obtain
\begin{align}
\label{1118-8} D_{x_n} u_i (x', 0, t) &\leq - c_2 a   |x'|^{-n +1}   + c_4 |x'|^{-n +1} M(t) +   d \ep_0 a M_0, \quad 1 < t < 2,
\end{align}

%{\color{red}{  $\ep_0 < \frac{c_2  c_{**}^{-n +1}}{dM_0}$      }}
Furthermore, applying \eqref{0915-1-1}  to $ D_{x_n} w_i$, then   from \eqref{251209-7}, for $1 < t < 2$, we have
\begin{align}\label{260417-1}
\frac{d}{dt}  D_{x_n} u_i(x', 0,t) < - c_3   a  |x'|^{-n +1} +  d  \ep_0 a M_0.
\end{align}
%{\color{red}{   $\ep_0 < \frac{c_3   c_{**}^{-n +1}}{dM_0}$    }}
It also follows from %\eqref{eq_proof1}, 
\eqref{pressure260410}  and \eqref{251208-6} that
\begin{align}\label{260417-2}
\begin{split}
%D_{x_i} q (x', 0,t) &= D_{x_i} p (x', 0,t) + D_{x_i} \pi (x', 0,t) \\
%&\leq  - c_4 a   |x'|^{-n +1} t^{\frac{\al}2}   + d \ep_0 a M_0 t^{\frac{\al}2}, \quad  0 < t<t_1,\\
D_{x_i} q (x', 0,t) & \geq  c_4  a\phi(t)|x'|^{-n -1}  -  c_5 a \phi' (t) |x'|^{-n +1}  -  d  \ep_0 a M_0, \quad 1 < t<2
\end{split}
\end{align}
for large $|x'|$.
%{\color{red}{  %$\ep_0 < \frac{c_4   c_{**}^{-n +1}}{dM_0}$ and  
%$\ep_0 < \frac{c_5  \phi(t)  c_{**}^{-n -1} + c_6  | \phi'(t)|  c_{**}^{-n +1}}{dM_0}  $     }}

Let 
\[
\varepsilon_0 \le \frac12 \min\left\{
\frac{c_{**}^{-n+1}}{dM_0}\min_{1\le i\le 5} c_i,\;
\frac{c_{**}^{-n+1}}{dM_0}\min_{1<t<2}\left(c_5 \phi(t)c_{**}^{-2}+c_6|\phi'(t)|\right)
\right\}.
\]
From  $\eqref{asumption260410}_2$, we have $\ep_0 > 0$.  Then, for $ c_* < |x'| < c^{**}$,  we obtain
\begin{align*}
D_{x_n} u_i (x', 0, t) &> 0, \quad 0 < t < 1, \quad \mbox{from} \quad \eqref{1118-7}, \\
D_{x_n} u_i (x', 0, 2) &< 0, \quad  \mbox{from} \quad \eqref{1118-8}, \\
\frac{d}{dt}  D_{x_n} u_i(x', 0,t) &< 0, \quad 1 < t < 2, \quad \mbox{from} \quad \eqref{260417-1},\\
%D_{x_i} q (x', 0,t) &< 0, \quad  0 < t < t_1,\\
D_{x_i} q (x', 0,t) &> 0, \quad 1 < t<2, \quad \mbox{from} \quad \eqref{260417-2}.
\end{align*}
By the same argument as in Section \ref{mainsection}, there exists $1 < t^* < t_0$ such that $(x', t^*)$ is a point of boundary layer separation for $u_i$.
This completes the proof of (i)  of Theorem \ref{maintheorem(NS)}.

 Lets remember  \eqref{1118-7}, $D_{x_n} u_i (x', 0, t) >0$.  From  \eqref{060410-5}  and \eqref{251209-7},   for $ 1 < t<2$, 
\begin{align*}   
\begin{split}
 D_{x_n} u_i  (x', 0,t) >      - c_1   |x'|^{-n-1}   +  c_2   |x'|^{-n+1} -    \ep_0  a M_0.
\end{split}
\end{align*}
The same arguement with the proof of (i), we have that  there is small number $\ep_0>0$ such that if $ c_* < |x'| < c_{**}$ with $ |x'| < c_i x_i$, then
$ u_i$ has no boundary layer separation with form $(x', 0, t)$ for all $ t \in  (0,2)$. We complete the proof of (ii) of Theorem \ref{maintheorem(NS)}.
\end{pfthm-NS}  

%%%%%%%%%%%%%%%%%%%%%%%%%%%%%%%%%%%%%%%%
%%%%%%%%%%%%%%%%%%%%%%%%%%%%%%%%%%%%%%%%
%%%%%%%%%%%%%%%%%%%%%%%%%%%%%%%%%%%%%%%%

\section{Appendix}\label{app-lemmas}

\setcounter{equation}{0}
\numberwithin{equation}{section}

\subsection{Proof of Lemma \ref{thoe1021-1}}\label{prooflemma1}

We note firts that $\displaystyle \int_{\Rn} D_{x_n} N(x' -y', x_n) h(y') dy' \Big|_{x_n =0} =  \frac12 h(x')$. Since 
\begin{align*}
D_{x_n} w^N_i(x,t) &= 2 \int_{\Rn} D_{x_n} N(x' -y', x_n) D_{y_i} g_n(y',t) dy', \quad 1 \leq i \leq n-1, 
\end{align*}
we have 
\begin{align}\label{w-in}
 D_{x_n} w^N_i(x', 0,t) &=  D_{x_i} g_n(x',t), \quad 1 \leq i \leq n-1.
\end{align}

Since $2\int_{-\infty}^t \int_{\Rn} D_{x_n} \Ga(x' -y', x_n, t-s) dy' ds = -1$ for all $(x,t)$ and $g_n(x',s) = 0$ for $s < 0$, we have 
\begin{align}\label{0106-1}
\begin{split}
w_i^B(x', x_n,t) +  R'_i g_n(x', t) &= 2\int_{-\infty}^t \int_{\Rn} D_{x_n} \Ga(x' -y', x_n, t-s)\big( R'_i g_n(y',s) - R'_i g_n(x',t) \big) dy' ds \\
&= 2\int_0^t \int_{\Rn} D_{x_n} \Ga(x' -y', x_n, t-s) \big( R'_i g_n(y',s) - R'_i g_n(x',s) \big) dy' ds \\
& \quad +  2 \int_{-\infty}^t \int_{\Rn} D_{x_n} \Ga(x' -y', x_n, t-s) \big( R'_i g_n(x',s) - R'_i g_n(x',t) \big) dy' ds.
\end{split}
\end{align}
Since $w_i^B(x', 0, t) = -  R'_i g_n(x', t)$, from \eqref{0106-1}, we obtain 
\begin{align*}
\begin{split}
D_{x_n} w^B_i(x', 0,t) &= \lim_{x_n \rightarrow 0} \frac{w_i^B(x', x_n, t) +  R'_i g_n(x', t)}{x_n} \\
&= - \frac{1}{ 2\pi^ \frac12} \int_0^t \frac1{(t -s)^\frac32} \int_{\Rn} \Ga'(x' -y', t-s) \big( R'_i g_n(y',s) - R'_i g_n(x',s) \big) dy' ds \\
&\quad - \frac{1}{ 2\pi^ \frac12}  \int_{-\infty}^t \frac1{(t -s)^\frac32} \int_{\Rn} \Ga'(x' -y', t-s) \big( R'_i g_n(x',s) - R'_i g_n(x',t) \big) dy' ds. 
\end{split}
\end{align*}

Similarly, we have 
\begin{align*}
\begin{split}
 D_{x_n} w^G_i(x', 0,t) &=\frac{1}{ 2\pi^ \frac12} \int_0^t \frac1{(t -s)^\frac32} \int_{\Rn} \Ga'(x' -y', t-s) \big( g_i(y',s) - g_i(x',s) \big) dy' ds \\
&\quad + \frac{1}{ 2\pi^ \frac12}  \int_{-\infty}^t \frac1{(t -s)^\frac32} \int_{\Rn} \Ga'(x' -y', t-s) \big( g_i(x',s) - g_i(x',t) \big) dy' ds. 
\end{split}
\end{align*}
 From the second identity of \eqref{1006-3}, we observe
\begin{align}\label{0819-1-2}
\begin{split}
w^{L}_{i}(x,t) &= 4 D_{x_n} D_{x_i}\int_0^{x_n} \int_{\Rn} N(x-y) k_n(y,t)dy - 2R'_i k_n(x,t), \\
w_{ij}(x,t) &= 4 D_{x_i} D_{x_j} \int_0^{x_n} \int_{\Rn} N(x-y) k_j(y,t)dy,
\end{split}
\end{align}
where   $w_{ij}$ are defined in \eqref{260409-1} and 
 \begin{align*}%\label{160224-7}
k_j(x,t) = \int_0^t \int_{\Rn} D_{x_n} \Ga(x' -y', x_n, t-s) g_j(y',s) dy' ds, \quad 1 \leq j \leq n.
\end{align*}

Note that 
\begin{align}\label{lapl}
\De_x \int_0^{x_n} \int_{\Rn} N(x-y) k_j (y,t)dy = \frac12 k_j (x,t) + I(D_{x_n} k_j(\cdot, x_n, t))(x'),
\end{align}
where $Ik_j(\cdot, x_n,t)(x') = \int_{\Rn} N(x'-y', 0) k_j (y', x_n,t) dy'$.   Since $D_{x_i} I = \frac12 R'_i$ for $1 \leq i \leq n-1$,  from  \eqref{0819-1-2} and \eqref{lapl}, we have
\begin{align} \label{260409-2}
\begin{split}
 D_{x_n} w^{L}_{i}(x,t) &= 4\Big( - D_{x_i} \De_{x'} \int_0^{x_n} \int_{\Rn} N(x-y)k_n(y,t)dy + \frac12 D_{x_i} k_n(x,t) \\
 & \quad + D_{x_n} D_{x_i} I(k_n(\cdot, x_n))(x') \Big)  - 2 D_{x_n} R'_i k_n(x,t) \\
&= -4 D_{x_i} \De_{x'} \int_0^{x_n} \int_{\Rn} N(x-y)k_n(y,t)dy + 2 D_{x_i} k_n(x,t).
\end{split}
\end{align}
Note that  for $ y'\in \Rn$, we have 
\begin{align}\label{260227-1}
\begin{split}
|D_{y_i} \De_{y'}  k_n(y,t)| %&  \leq c  \int_0^t \int_{\Rn}  \frac{y_n}{(t -s)^{\frac{n+2}2}}  e^{-\frac{|y'|^2}{t -s}}      |D_{z_i} \De_{z'}   g_n(z',s)| dz'ds\\
& \leq c   \|D_{z_i} \De_{z'}   g_n\|_{L^\infty (\Rn \times (0, \infty))} \int_0^t \int_{\Rn}  \frac{y_n}{(t -s)^{\frac{n+2}2}}  e^{-\frac{|y' -z'|^2 + y_n^2}{t -s}}      dz'ds\\
& \leq c   \|D_{z_i} \De_{z'}   g_n\|_{L^\infty (\Rn \times (0, \infty))}
\end{split}
\end{align}
and  since $supp \, g_n \subset B'(0,1) \times (0,2)$, for $|y'|> 2$, we have 
\begin{align}\label{260227-2}
\begin{split}
|D_{y_i} \De_{y'}  k_n(y,t)|  % &  \leq c  \int_0^t \int_{\Rn}  \frac{y_n}{(t -s)^{\frac{n+2}2}}  e^{-\frac{|y'|^2}{t -s}}      |D_{z_i} \De_{z'}   g_n(z',s)| dz'ds\\
& \leq c   \|D_{z_i} \De_{z'}   g_n\|_{L^\infty (\Rn \times (0, \infty))} \int_0^t \int_{B'(0,1)}  \frac{y_n}{(t -s)^{\frac{n+2}2}}  e^{-\frac{|y'|^2 + y_n^2}{t -s}}      dz'ds\\
& \leq c   \|D_{z_i} \De_{z'}   g_n\|_{L^\infty (\Rn \times (0, \infty))} \int_0^t   \frac{y_n}{(t -s)^{\frac{n+2}2}}  e^{-\frac{|y'|^2 + y_n^2 }{t -s}}      ds\\
& \leq c   \|D_{z_i} \De_{z'}   g_n\|_{L^\infty (\Rn \times (0, \infty))}   y_n     |y'|^{-n-1}.
\end{split}
\end{align}
From \eqref{260227-1} and \eqref{260227-2}, we have $4 D_{x_i} \De_{x'} \int_0^{x_n} \int_{\Rn} N(x-y)k_n(y)dy \rightarrow 0$ as $ x_n \rightarrow 0$.  Since 
\[
D_{x_i} k_n(x', 0, t) = \int_0^t \int_{\Rn} D_{x_n} \Ga(x' -z', x_n, t-s) D_{z_i} g_n(z',s) dz'ds \Big|_{x_n =0} = -\frac12 D_{x_i} g_n(x',t),
\]
from \eqref{260409-2}, we have 
\begin{align}\label{w-li}
D_{x_n} w^L_{i}(x',0,t) &= -D_{x_i} g_n(x',t).
\end{align}  

Next, from $\eqref{1006-3}_2$,  we have 
\begin{align*}
D_{x_n} w_{ij}(x,t) &= 4 D_{x_i} \int_0^t \int_{\Rn} L_{nj}(x'-y', x_n, t-s) D_{y_i} g_j(y',s) dy' ds + 2 D_{x_i} R'_j k_j(x,t).
\end{align*}
Since $L_{nj}(x'-y',0, t-s) = 0$, we have 
\begin{align*}
D_{x_n} w_{ij}(x',0,t) &= -2 D_{x_i}R'_j g_j(x',t).
\end{align*}
Thus, it follows from \eqref{w-in} and \eqref{w-li} that $D_{x_n} w^L_{i}(x',0,t) +D_{x_n} w^N_i(x', 0,t) =0$.
Since $w_i$ is represented by \eqref{repre0926}$_1$, summing all the estimates completes the proof of  \eqref{normal}.

Next, we prove \eqref{representationpressure-1}.  Note that 
\begin{align}\label{1027-1}
\begin{split}
\int_{-\infty}^t  D_{x_j}\Ga(x,t-s) ds = - D_{x_j}N(x),\quad 1 \leq j \leq n, \quad \int_{\Rn} D_{x_n} N(y', x_n) N(x'-y',0) dy'=  \frac12 N(x).
% \int_{\Rn} D_{x_i} N(y', x_n) N(x'-y',0) dy'= R'_i N( x', x_n).
\end{split}
\end{align}

From $\eqref{1027-1}_1$, for $t > 0$, we have 
\begin{align}\label{0224-1}
\begin{split}
& \int_{-\infty}^t \int_{\Rn} g_j (z',s) \int_{\Rn}  N(y', x_n) D_{x_j}\Ga(x'-y' -z',0,t-s) dy' dz' ds\\
& = \int_{-\infty}^t \int_{\Rn}\big( g_j (z',s) -g_j(z',t) \big) 
  \int_{\Rn}   N(y', x_n) D_{x_j} \Ga(x'-y' -z',0,t-s) dy' dz' ds \\
&\quad -   \int_{\Rn} g_j (z',t) \int_{\Rn}  N(y', x_n)  D_{x_j} N(x'-y' -z',0) dy' dz'.
\end{split}
\end{align}
Notice that from $\eqref{1027-1}_2$, we note that
\[
\De' \int_{\Rn}   N(y', x_n) D_{x_j} N(x'-y' -z',0) dy' = -\frac12 D_{x_n} D_{x_j} N(x'-z', x_n).
\]
Hence, it follows from \eqref{0224-1} that
\begin{align}\label{p-est}
\begin{split}
& (D_t -\De') \int_{-\infty}^t \int_{\Rn} g_j (z',s) \int_{\Rn} D_{y_j} N(y', x_n) \Ga(x'-y' -z',0,t-s) dy' dz' ds\\
%& = (D_t -\De') \int_{-\infty}^t \int_{\Rn}\big( g_j (z',s) -g_j(z',t) \big) \int_{\Rn} D_{y_j} N(y', x_n) \Ga(x'-y' -z',0,t-s) dy' dz'ds \\
%&\quad - (D_t -\De') \int_{\Rn} g_j (z',t) \int_{\Rn} D_{y_j} N(y', x_n) N(x'-y' -z',0) dy' dz'\\
& = - \int_{-\infty}^t \int_{\Rn} D_t g_j(z',t) \int_{\Rn} D_{y_j} N(y', x_n) \Ga(x'-y' -z',0,t-s) dy' dz' ds \\
&\quad + \int_{-\infty}^t \int_{\Rn}\big( g_j (z',s) -g_j(z',t) \big) 
  \int_{\Rn} D_{y_j} N(y', x_n) (D_t -\De') \Ga(x'-y' -z',0,t-s) dy' dz' ds \\
&\quad - \int_{\Rn} D_t g_j (z',t) \int_{\Rn} D_{y_j} N(y', x_n) N(x'-y' -z',0) dy' dz'\\
&\quad + \De' \int_{\Rn} g_j (z',t) \int_{\Rn} D_{y_j} N(y', x_n) N(x'-y' -z',0) dy' dz'\\
& = \int_{-\infty}^t \int_{\Rn}\big( g_j (z',s) -g_j(z',t) \big) 
  \int_{\Rn} D_{y_j} N(y', x_n) \Big(-\frac{1}{2(t -s)}\Big) \Ga(x'-y' -z',0,t-s) dy' dz' ds \\
&\quad -  \frac12 D_{x_j} D_{x_n} \int_{\Rn} g_j (z',t) N(x' -z',x_n) dz'.
\end{split}
\end{align}

Recalling \eqref{representationpressure} and using \eqref{p-est}, we have
 \begin{align*}%\label{representationpressure-1-1}
\begin{split}
p(x,t) 
& = -4 \sum_{j =1}^{n} D_{x_j} D_{x_n} \int_{\Rn} N(x' -y', x_n) g_j(y', t) dy' - 2 D_t N*' g_n (x,t) \\
& \quad - 2\sum_{j =1}^{n} \int_{-\infty}^t \int_{\Rn}\frac{g_j (z',s) -g_j(z',t)}{t -s} 
  \int_{\Rn} D_{y_j} N(y', x_n) \Ga(x'-y' -z',0,t-s) dy' dz' ds. 
\end{split}
\end{align*}
We complete the proof of \eqref{representationpressure-1}.
\qed 
 
%\section{Proof of Lemma \ref{lemma1116-1}}
%\setcounter{equation}{0}\label{appendix}
%\label{prooflemma3-1}.

\subsection{Proof of Lemma \ref{lemma1116-1}}
\label{prooflemma3-1}

Using the integration by parts, we have 
\begin{align*}
D_{x'}^\be  f(x',t)  = & \int_{\Rn}    \Ga'(x' -y', t)     \big(   D_{y'}^\be  R'_i \psi(y')   -  D_{x'}^\be   R'_i \psi(x')    \big)  dy'\\
=&\int_{|x' -y'| \leq \frac12 |x'|}  \cdots dy+\int_{|x' -y'| \geq  \frac12 |x'|}\cdots dy:=\tilde f_1(x',t) +\tilde f_2(x',t)
\end{align*}
%Let  
%\begin{align*}
%\tilde f_1(x',t) &  =  \int_{|x' -y'| \leq \frac12 |x'|}    \Ga'(x' -y', t)     \big(      D_{y'}^\be  R'_i \psi(y')   -     D_{x'}^\be  R'_i \psi(x')    \big)  dy',\\
%\tilde f_2(x',t) & = \int_{|x' -y'| \geq  \frac12 |x'|}    \Ga'(x' -y', t)     \big(    D_{y'}^\be   R'_i \psi(y')   -    D_{x'}^\be R'_i \psi(x')    \big)  dy'  
%\end{align*}
%such that $D_{x'}^\be  f = \tilde f_1 + \tilde f_2$. 
Since $ \int_{|y'| \leq \frac12 |x'|} \Ga'(y',t) y' dy' =0$, we have 
\begin{align*}%\label{260110-5}
\begin{split}
 \tilde  f_1(x',t)
 & = \int_{|y'| \leq \frac12 |x'|} \Ga'(y',t)   \int_0^1 \na_{x'}    D_{x'}^\be R'_i \psi (x' -\theta y') \cdot (-y') d \theta dy'\\
 & = \int_{|y'| \leq \frac12 |x'|} \Ga'(y',t)   \int_0^1 \Big(  \na_{x'}   D_{x'}^\be  R'_i \psi (x' -\theta y') - \na_{x'}  D_{x'}^\be R'_i \psi (x' ) \Big) \cdot (-y') d \theta dy'\\
 & = \int_{|y'| \leq \frac12 |x'|} \Ga'(y',t)   \int_0^1  \int_0^1 (-\te y')  \na^2_{x'}   D_{x'}^\be  R'_i \psi (x' -\eta \theta y')  (- y') d\eta   d \theta dy'\\
& := \tilde f_{11}(x',t) +\tilde f_{12}(x',t),
\end{split}
\end{align*}
where 
\begin{align*}
 \tilde f_{11} (x',t) & =  \int_{|y'| \leq \frac12 |x'|} \Ga'(y',t)   \int_0^1  \te \int_0^1     y'        \na^2_{x'}    D_{x'}^\be R'_i \psi (x'  )    y'      d\eta   d \theta dy',\\
 \tilde f_{12}(x',t)& = \int_{|y'| \leq \frac12 |x'|} \Ga'(y',t)   \int_0^1  \te  \int_0^1  y' \Big(      \na^2_{x'}   D_{x'}^\be R'_i \psi (x' -\eta \theta y') -     \na^2_{x'}   D_{x'}^\be R'_i \psi (x' )  \Big)   y' d\eta   d \theta dy'.
 \end{align*}
Since $
\int_{|y'| \leq \frac12 |x'|}  e^{-\frac{|y'|^2}t} y_k y_l dy'
  = \de_{kl}   \frac1{n-1} \int_{|y'| \leq \frac12 |x'|}  e^{-\frac{|y'|^2}t}   |y'|^2 dy'$, 
we have 
\begin{align*}
\begin{split}
 \tilde f_{11}(x',t) % & =   \int_{|y'| \leq \frac12 |x'|} \Ga'(y',t)   \int_0^1  \int_0^1    (\te y_k)       y_l      d\eta   d \theta dy'\\
& =  \De'      D_{x'}^\be   R'_i \psi (x'  )       \int_{|y'| \leq \frac12 |x'|} \Ga'( y',t)  |y'|^2 dy' \int_0^1 \int_0^1 \te d\te d\eta   \\
& = \frac1{2(n-1)}   \De'      D_{x'}^\be   R'_i \psi (x'  )  \int_{|y'| \leq \frac12 |x'|}   \Ga'(y',t) | y'|^2 dy'\\
& =  \frac1{2(n-1)}   \De'   D_{x'}^\be     R'_i \psi (x'  )   t  \int_{|y'| \leq \frac12\frac{ |x'|}{\sqrt{t}}} \Ga'( y',1)   | y'|^2   dy'\\
&=\frac1{2(n-1)}   \De'   D_{x'}^\be     R'_i \psi (x'  )   t  \bke{\int_{\Rn }\Ga'( y',1)   | y'|^2   dy' -\int_{|y'| \geq \frac12\frac{ |x'|}{\sqrt{t}}} \Ga'( y',1)   | y'|^2   dy'}\\
%& =  \frac1{2(n-1)}   \De'     D_{x'}^\be   R'_i \psi (x'  )    t  \int_{\Rn } \Ga'( y',1)   | y'|^2   dy' \\
%\\ & \qquad 
%-   \frac12 \frac1{n-1}   \De'    D_{x'}^\be  R'_i \psi (x'  )  t  \int_{|y'| \geq \frac12\frac{ |x'|}{\sqrt{t}}} \Ga'( y',1)   | y'|^2   dy'\\
& := \tilde f_{111}(x',t) + \tilde f_{112}(x',t).
\end{split}
\end{align*}
Since  $  e^{-\frac{|x'|^2}t} \leq c_m ( \frac{|x'|^2}t )^{-m}$ for  any $m > 0$, we have 
\begin{align*}
| \tilde f_{112}(x',t)| \leq  \frac1{2(n-1)} | \De'    D_{x'}^\be   R'_i \psi (x'  ) | t \int_{|y'| \geq \frac12\frac{ |x'|}{\sqrt{t}}} \Ga'( y',1)   | y'|^2   dy'
\leq   c    | \De'    D_{x'}^\be   R'_i \psi (x'  ) |  t   e^{-\frac{|x'|^2}t}.
\end{align*}

On the other hand, for  $\tilde f_{12}$ direct calculations show that
\begin{align*}
|\tilde f_{12}(x',t)| & =  \left| \int_{|y'| \leq \frac12 |x'|} \Ga'(y',t) \int_0^1 \int_0^1 \te y_k y_l \Big( D_{x_k} D_{x_l}   D_{x'}^\be R'_i \psi (x' -\eta \theta y') - D_{x_k} D_{x_l}   
                                             D_{x'}^\be R'_i \psi (x') \Big) d\eta d\theta dy' \right|\\
%& \leq c \int_{|y'| \leq \frac12 |x'|} \Ga'(y',t) |y'|^3 \frac{1}{|x'|^{n+2 +|\be| }} dy' \\%\quad \text{(assuming proper decay of $\nabla^3 R'_i \psi$)} \\
& \leq c \frac{1}{|x'|^{n+2 + |\be|}} \int_{|y'| \leq \frac12 |x'|} \Ga'(y',t) |y'|^3 dy' \\
%= c \frac{t^{\frac32}}{|x'|^{n+2 +|\be|}} \int_{|y'| \leq \frac12 \frac{|x'|}{\sqrt{t}}} \Ga'(y',1) |y'|^3 dy' \\
& \leq c \frac{t^{\frac32}}{|x'|^{n+2 +|\be|}}. %\quad \text{(since $|x'| \geq c_* \geq \sqrt{t} $)}
\end{align*}
where we used that $|x'| \geq c_* \geq \sqrt{t} $.
Lastly, for $\tilde f_2$ it is straightforward that 
\begin{align*}
\tilde f_2(x',t) & \leq     2  \| D^\be_{x'} R'_i \psi\|_{L^\infty (\R)} \int_{|x' -y'| \geq \frac12|x'|} \Ga' (x' -y', t)  dx \leq c e^{-\frac{|x'|^2}t}\leq   c \frac{t^{\frac32+\frac{|\be|}2 }}{|x'|^{n+2 +|\be|}}.
\end{align*}
Since $\frac1{2(n-1)}  \int_{\Rn }\Ga'( y',1)   | y'|^2   dy'  =1$, taking   $ F_1 = \tilde f_{111}$ and $ F_2 =  \tilde f_{112}   +  \tilde f_{12}  + \tilde f_2$,  we obtain the result of  Lemma \ref{lemma1116-1}.
\qed

%\section{Proof of Theorem \ref{so}}\label{existenceofsolution2} 
%\setcounter{equation}{0}

\subsection{Proof of Lemma \ref{so}}\label{existenceofsolution2}

The cases $k\ge 2$ and $k=0$ were proved in \cite{S02} and \cite{CJ2}, respectively. To the best of the authors' knowledge, the case $k=1$ has not yet been treated in the literature. For the reader's convenience, we therefore include a proof for this case.

We recall that the solution $w$ to the system \eqref{Stokes-10-2} with boundary data $g$ is represented by \eqref{repre0926}.
By the well-known estimates for solutions to the heat equation (see e.g. \cite[Section 2 of Chapter IV]{LS}), for $l=0,1,2$ and $0<\alpha<1$, we have
\begin{align*}
\begin{split}
\| w^G_i \|_{C^{l+\al, \frac{l}2 +\frac{\al}2}(Q_+(T))} &\leq c \| g_j \|_{C^{l+\al, \frac{l}2 +\frac{\al}2}(q(T))}.
\end{split}
\end{align*}
From \eqref{NB0926}, for $l = 0, \, 1, \, 2$ and $0 < \al < 1$, similar arguments imply that
\begin{align*}
\begin{split}
\| w^N_i \|_{C^{l+\al, \frac{l}2 +\frac{\al}2}(Q_+(T))} &\leq c \| R'_i g_n \|_{C^{l+\al, \frac{l}2 +\frac{\al}2}(q(T))}, \\
\| w^B_i \|_{C^{l+\al, \frac{l}2 +\frac{\al}2}(Q_+(T))} &\leq c \| R'_i g_n \|_{C^{l+\al, \frac{l}2 +\frac{\al}2}(q(T))}.
\end{split}
\end{align*}
In Theorem 3.1 of \cite{CJ2}, the authors showed the following result:
\begin{align*}%\label{1127-1}
\| w_{ij} \|_{C^{\al, \frac{\al}2}(Q_+(T))} \leq c \| g_j \|_{C^{\al, \frac{\al}2}(q(T))}, \quad 1 \leq i \leq n, \quad 1 \leq j \leq n-1,
\end{align*} 
where $w_{ij}$ are defined in \eqref{260409-1}.

Note that 
\begin{align}\label{1127-2}
\| k_j \|_{C^{\al, \frac{\al}2}(Q_+(T))} \leq \| g_j \|_{C^{\al, \frac{\al}2}(q(T))}.
\end{align}

Hence, from \eqref{0819-1-2} and \eqref{260409-2}, for $i \neq n$, we have 
\begin{align}\label{0819-1-1}
\begin{split}
D_{x'} w_{i}^L(x,t) &= 4 D_{x_n} D_{x_i} \int_0^{x_n} \int_{\Rn} N(x-y) D_{y'} k_n(y,t) dy - 2 D_{x'} R'_i k_n(x,t).
\end{split}
\end{align}
Thus, from \eqref{260409-2}, \eqref{1127-2} and \eqref{0819-1-1}, we obtain 
\begin{align}\label{1127-3}
\| D_{x} w_{i}^L \|_{C^{\al,\frac{\al}2}(Q_+(T))} \leq c \| D_{x'} g_n \|_{C^{\al,\frac{\al}2}(q(T))}.
\end{align}
We recall the following estimate (see \cite[Section 3]{KS} and  \cite[Section 2]{So}):
\begin{align}\label{estimateso}
|D^k_{x'} L_{ij}(x,t)| \leq c_\la \frac{x_n^\la}{t^{\frac12+\frac{\la}2}(|x|^2 + t)^{\frac{n}2}}, \quad \forall \la \in (0, 1), \quad k \in \mathbb{N} \cup \{0\}, \quad 1 \leq i \leq n, \,\, 1 \leq j \leq n-1.
\end{align}
Now, we will show the H\"{o}lder continuity of $w_i^L$ with respect to time. For $0 < \tau < t$, 
\begin{align*}
 w^{L}_{i}(x,t) - w^{L}_{i}(x,\tau) &= \int_0^\tau \int_{\Rn} L_{ni}(x'-y', x_n, s) \Big( g_n(y',t-s) - g_n(y', \tau-s) \Big) dy' ds \\
&\quad + \int_\tau^t \int_{\Rn} L_{ni}(x'-y', x_n, s) g_n(y',t- s) dy' ds \\
&= I_1 + I_2.
\end{align*}
From \eqref{estimateso}, we have 
\begin{align*}
\begin{split}
I_1 &\leq c \| g_n \|_{L^\infty(\Rn; C^{\frac12 + \frac{\al}2}(0, T))} (t -\tau)^{\frac12 + \frac{\al}2} \int_0^\tau \int_{\Rn} s^{-\frac12 -\frac{\la}2} \frac{x_n^\la}{(|x-y'|^2 + s)^{\frac{n}2}} dy' ds \\
&\leq c \| g_n \|_{C^{1 +\al, \frac12 + \frac{\al}2}(q(T))} (t -\tau)^{\frac12 + \frac{\al}2} \int_0^\tau s^{-\frac12 -\frac{\la}2} x_n^\la (x_n^2 + s)^{-\frac12} ds \\
&\leq c \| g_n \|_{C^{1 +\al, \frac12 + \frac{\al}2}(q(T))} (t -\tau)^{\frac12 + \frac{\al}2} \int_0^{\frac{\tau}{x_n^2}} s^{-\frac12 -\frac{\la}2} (1 + s)^{-\frac12} ds \\
&\leq c \| g_n \|_{C^{1 +\al, \frac12 + \frac{\al}2}(q(T))} (t -\tau)^{\frac12 + \frac{\al}2}.
\end{split}
\end{align*}
Since $g(y', 0) = 0$, we have 
\begin{align}\label{0104-1}
I_2 &\leq c \| g_n \|_{C^{1 +\al, \frac12 + \frac{\al}2}(q(T))} \int_\tau^t (t -s)^{\frac12 + \frac{\al}2} s^{-\frac12 -\frac{\la}2} x_n^\la (x_n^2 + s)^{-\frac12} ds.
\end{align}
Note that for $t, x_n > 0$, 
\begin{align}\label{0104-2}
 t^{\frac12 -\frac{\la}2} x_n^\la (x_n^2 + t)^{-\frac12} \leq c x_n^\la (x_n^2 + t)^{-\frac{\la}2} \leq c.
\end{align}
If $\frac{t}2 < \tau < t$, then from \eqref{0104-1} and \eqref{0104-2}, we have 
\begin{align*}
\begin{split}
I_2 &\leq c \| g_n \|_{C^{1 +\al, \frac12 + \frac{\al}2}(q(T))} \int_\tau^t (t -s)^{\frac12 + \frac{\al}2} t^{-\frac12 -\frac{\la}2} x_n^\la (x_n^2 + t)^{-\frac12} ds \\
&\leq c \| g_n \|_{C^{1 +\al, \frac12 + \frac{\al}2}(q(T))} (t -\tau)^{\frac12 + \frac{\al}2} t^{\frac12 -\frac{\la}2} x_n^\la (x_n^2 + t)^{-\frac12} \\
&\leq c \| g_n \|_{C^{1 +\al, \frac12 + \frac{\al}2}(q(T))} (t -\tau)^{\frac12 + \frac{\al}2}.
\end{split}
\end{align*}

If $0 < \tau \leq \frac{t}2$, then we split the integral as
\begin{align*}
\begin{split}
I_2 &\leq c \| g_n \|_{C^{1 +\al, \frac12 + \frac{\al}2}(q(T))} \int_\tau^{\frac{t}2} (t -s)^{\frac12 + \frac{\al}2} s^{-\frac12 -\frac{\la}2} x_n^\la (x_n^2 + s)^{-\frac12} ds \\
&\quad + c \| g_n \|_{C^{1 +\al, \frac12 + \frac{\al}2}(q(T))} \int_{\frac{t}2}^t (t -s)^{\frac12 + \frac{\al}2} s^{-\frac12 -\frac{\la}2} x_n^\la (x_n^2 + s)^{-\frac12} ds \\
&= I_{21} + I_{22}.
\end{split}
\end{align*}
Since $t < 2(t -\tau)$ for $0 < \tau < \frac{t}2$, we have 
\begin{align*}
\begin{split}
I_{21} &\leq c \| g_n \|_{C^{1 +\al, \frac12 + \frac{\al}2}(q(T))} \int_0^{\frac{t}2} t^{\frac12 + \frac{\al}2} s^{-\frac12 -\frac{\la}2} x_n^\la (x_n^2 + s)^{-\frac12} ds \\
&\leq c \| g_n \|_{C^{1 +\al, \frac12 + \frac{\al}2}(q(T))} t^{\frac12 + \frac{\al}2} \int_0^{\frac{t}{2x_n^2}} s^{-\frac12 -\frac{\la}2} (1 + s)^{-\frac12} ds \\
&\leq c \| g_n \|_{C^{1 +\al, \frac12 + \frac{\al}2}(q(T))} (t-\tau)^{\frac12 + \frac{\al}2},
\end{split}
\end{align*}
and
\begin{align*}
\begin{split}
I_{22} &\leq c \| g_n \|_{C^{1 +\al, \frac12 + \frac{\al}2}(q(T))} \int_{\frac{t}2}^t (t -s)^{\frac12 + \frac{\al}2} t^{-\frac12 -\frac{\la}2} x_n^\la (x_n^2 + t)^{-\frac12} ds \\
&\leq c \| g_n \|_{C^{1 +\al, \frac12 + \frac{\al}2}(q(T))} t^{\frac32 + \frac{\al}2} t^{-\frac12 -\frac{\la}2} x_n^\la (x_n^2 + t)^{-\frac12} \\
&\leq c \| g_n \|_{C^{1 +\al, \frac12 + \frac{\al}2}(q(T))} (t-\tau)^{\frac12 + \frac{\al}2}.
\end{split}
\end{align*}
Hence, we obtain
\begin{align}\label{0412-10}
\| w^L_i \|_{L^\infty(\R_+; \dot{C}^{\frac12 +\frac{\al}2}(Q_+(T)))} \leq c \| g_n \|_{C^{1 +\al, \frac12 + \frac{\al}2}(q(T))} (t-\tau)^{\frac12 + \frac{\al}2}. 
\end{align}
Summing all the estimates \eqref{1127-3} and \eqref{0412-10}, we have 
\begin{align*}
\| w_{i}^L \|_{C^{1+\al, \frac12 + \frac{\al}2}(Q_+(T))} \leq c \Big( \| g_n \|_{C^{1+\al, \frac12 +\frac{\al}2}(q(T))} + \| R'_i g_n \|_{C^{1+\al, \frac12 +\frac{\al}2}(q(T))} \Big).
\end{align*}
Similarly, we have 
\begin{align*}
\| w_{ij} \|_{C^{1+\al, \frac12 + \frac{\al}2}(Q_+(T))} \leq c\| g \|_{C^{1+\al, \frac12 +\frac{\al}2}(q(T))}.
\end{align*}
This completes the proof of (1) of Theorem \ref{so}.
  
Similarly, we obtain
\begin{align*}
\begin{split}
\| D_{x}^2 w_{i}^L \|_{C^{\al,\frac{\al}2}(Q_+(T))} &\leq c \Big( \| D^2_{x'} g_n \|_{C^{\al,\frac{\al}2}(q(T))} + \| D^2_{x'}R' g_n \|_{C^{\al,\frac{\al}2}(q(T))} \Big), \\ 
\| D_t w_{i}^L \|_{C^{\al,\frac{\al}2}(Q_+(T))} &\leq c \Big( \| D_t g_n \|_{C^{\al,\frac{\al}2}(q(T))} + \| D_t R' g_n \|_{C^{\al,\frac{\al}2}(q(T))} \Big).
\end{split}
\end{align*}
Therefore, we conclude that 
\begin{align*}
\| w_{i}^L \|_{C^{2 +\al, 1 +\frac{\al}2}(Q_+(T))} \leq c \Big( \| g_n \|_{C^{2+\al, 1 + \frac{\al}2}(q(T))} + \| R' g_n \|_{C^{2+\al, 1 + \frac{\al}2}(q(T))} \Big).
\end{align*}
This completes the proof  of (2)of Lemma \ref{so}.
\qed

\subsection{Proof of Theorem \ref{theo1123}  }\label{prooftheo1123}
 
Since $\int_{\Rn }  y_k \Ga'(y', t-s)   dy' =0 $ for all $ 1 \leq k \leq n-1$ and $t -s> 0$,
applying the mean value theorem twice,   we have 
\begin{align*}
\begin{split}
&\int_{\Rn} \Ga' (x' -y',t-s)\big(R'_i g_n(y',s)   - R'_i g_n(x',s)  \big)   dy' \\
  =&\int_{\Rn} \Ga' (x' - y',t-s)    \int_0^1  \na'R'_i g_n (\te y' + (1 -\te)x',s)  d\te  \cdot  (y' -x')   dy'\\
= &\int_{\Rn} \Ga' (x' - y',t-s)   \int_0^1  \Big(  \na'R'_i g_n (\te y' + (1 -\te)x',s)  -   \na'R'_i g_n ( x',s) \Big)  d\te  \cdot  (y' -x')    dy'\\
\leq &   \| R' g\|_{L^\infty(0, t; \dot C^{1 +\al}(\Rn))} \int_{\Rn} \Ga' (x' - y',t-s)   \int_0^1  \te   |y' -x'|^{1 +\al} d\te     dy'\\
%& \leq  c \| R' g\|_{L^\infty(0, T; \dot C^{1 +\al}(\Rn))}   \int_{\Rn } |y'|^{1 +\al} \Ga' (y',t-s)  dy'\\
\leq & c  \| R' g\|_{L^\infty(0, t; \dot C^{1 +\al}(\Rn))} (t-s)^{\frac12 +\frac{\al}2}.
\end{split}
\end{align*}
Direct calculations show that 
\begin{align*}
\begin{split}
&\int_{\Rn} \Ga' (x' -y',t-s)\big(R'_i g_n(x',s)   - R'_i g_n(x',t)  \big)   dy' \\
 \leq   & \| R' g_n\|_{L^\infty(\Rn; \dot C^{\frac12 +\frac{\al}2}(0,t))} \int_{\Rn} \Ga' (x' - y',t-s)(t -s)^{\frac12 +\frac{\al}2}      dy'\\
%& \leq  c \| R' g\|_{L^\infty(0, T; \dot C^{1 +\al}(\Rn))}   \int_{\Rn } |y'|^{1 +\al} \Ga' (y',t-s)  dy'\\
\leq &c    \| R' g_n\|_{L^\infty(\Rn; \dot C^{\frac12 +\frac{\al}2}(0,t))}(t-s)^{\frac12 +\frac{\al}2}.
\end{split}
\end{align*}
Hence, we obtain
 \begin{align*}
 \begin{split}
& \int_0^t  \frac1{(t -s)^\frac32}    \int_{\Rn}   \Ga' (x' -y', t-s) \big( R'_i g_n(y',s)   -  R'_i  g_n(x',s)    \big)  dy' ds \\
\leq & c   \| R' g_n\|_{L^\infty(0, t;  \dot C^{1 +\al}(\Rn))}    \int_0^t (t -s)^{-1 +\frac{\al}2} ds\\
\leq & c   \| R' g_n\|_{L^\infty(0, t;  \dot C^{1 +\al}(\Rn))} t^{\frac{\al}2}.
  \end{split}
  \end{align*}
  Since  $ R'_j  g_n (x',0)   =0$, we note that $|R'_j  g_n (x',t)  - R'_j  g_n (x',0)  | \leq \| R'_j  g_n\|_{L^\infty(\Rn; \cdot C^{\frac12 +\frac{\al}2}(0, t))} t^{\frac12 +\frac{\al}2}$, and therefore we observe that 
   \begin{align*}
  \begin{split}
& \int_{-\infty}^0 \frac1{(t -s)^\frac32}    \int_{\Rn}   \Ga' (x' -y', t-s)      R'_i  g_n(x',t)    dy' ds\\
=& \| R'_i g_n \|_{L^\infty(\Rn;\cdot C^{\frac12 +\frac{\al}2} (0, t))}  t^{\frac12 +\frac{\al}2} \int_{-\infty}^0 \frac1{(t -s)^\frac32}    \int_{\Rn}   \Ga' (x' -y', t-s)          dy' ds\\
\leq  & c    \| R' g_n\|_{L^\infty(\Rn; \dot C^{\frac12 +\frac{\al}2}(0,t))} t^{\frac{\al}2}.
  \end{split}
 \end{align*}
It follows ffrom the Holder continuity of $R' g_n$ with respect to $t$ that 
  \begin{align*}
  \begin{split}
 & \int_0^t  \frac1{(t -s)^\frac32}    \int_{\Rn}   \Ga' (x' -y', t-s) \big( R'_i g_n(x',s)   -  R'_i  g_n(x',t)    \big)  dy' ds\\
 \leq & c  \| R' g_n\|_{L^\infty(\Rn; \dot C^{\frac12 +\frac{\al}2}(0,t))}   \int_0^t  \frac1{(t -s)^\frac32}    \int_{\Rn}   \Ga' (x' -y', t-s)  (t -s)^{\frac12 +\frac{\al}2}  dy' ds\\
 \leq & c    \| R' g_n\|_{L^\infty(\Rn; \dot C^{\frac12 +\frac{\al}2}(0,t))} t^{\frac{\al}2}.
  \end{split}
 \end{align*}
Hence, we have 
   \begin{align*}
  \begin{split}
 \int_{-\infty}^t \frac1{(t -s)^\frac32}    \int_{\Rn}   \Ga' (x' -y', t-s)    \big(   R'_i  g_n(x',t)  - R'_i g_n (x',s) \big)   dy' ds
   \leq  c    \| R' g_n\|_{L^\infty(\Rn; \dot C^{\frac12 +\frac{\al}2}(0,t))} t^{\frac{\al}2}.
  \end{split}
 \end{align*}
Similarly, we have 
 \begin{align*}
 \int_0^t  \frac1{(t -s)^\frac32}    \int_{\Rn}   \Ga' (x' -y', t-s) \big(  g_i(y',s)   -   g_i(x',s)    \big)  dy' ds 
  & \leq  c   \| g\|_{L^\infty(0, t;  \dot C^{1 +\al}(\Rn))} t^{\frac{\al}2},\\
   \int_{-\infty}^t  \frac1{(t -s)^\frac32}    \int_{\Rn}   \Ga' (x' -y', t-s) \big(  g_i(x',s)   -   g_i(x',t)    \big)  dy' ds 
  & \leq  c  \| g\|_{L^\infty(\Rn; \dot C^{\frac12 +\frac{\al}2}(0,t))} t^{\frac{\al}2}.
 \end{align*}
Since  $  R'_i g_j (x',0)     =0$, we have 
\begin{align*}
\begin{split}
 | D_{x_i}R'_j  g_j (x',t) | & \leq   c \| D_{x'} R'_j g\|_{L^\infty (\Rn ; C^{\frac{\al}2}(0, t)) }  t^{\frac{\al}2}.
%    |D_{x_i} g_n(x',t)|  & \leq c \| D_{x'}   g_n\|_{L^\infty (\Rn ; C^{\frac{\al}2}(0, t))} t^{\frac{\al}2}.
    \end{split}
\end{align*}
Then, from Theorem \ref{thoe1021-1} , we have
\begin{align*}  
\begin{split}
| D_{x_n} w_i   (x',0,t)|  & \leq c  t^{\frac{\al}2} \Big(  \|  g\|_{ C^{1 +\al,\frac12 +\frac{\al}2}(\Rn \times (0, t))}    +  \| R' g\|_{ C^{1 +\al,\frac12 +\frac{\al}2}(\Rn \times (0, t))} \Big).
\end{split}
\end{align*}
This completes the proof of  Lemma \ref{theo1123}. 
\qed

% \section{Proof of Proposition \ref{prop1120} }
%\setcounter{equation}{0}
%\label{proofprop1120}

\subsection{Proof of Lemma \ref{prop1120} }
\label{proofprop1120}

From Proposition 4.4 in  \cite{CJ2} and its proof, we recall that
\begin{align*}%\label{1118-3}
\begin{split}
\|   V^1|_{x_n =0}\|_{L^\infty(0, T; \dot C^\al(\Rn))} &  \leq cT^{\frac12 +\frac{\al}2} \|   f \|_{L^\infty(0, T; L^\infty(0, \infty; \dot C^\al(\Rn)))},\\
\| V^1|_{x_n =0}  \|_{L^\infty( \Rn; \dot C^{\frac12 +\frac{\al}2}(0, T))} &  \leq c   \| f \|_{L^\infty(0, T; L^\infty(0, \infty; \dot C^\al(\Rn)))},\\
\| V^1|_{x_n =0} \|_{L^\infty (\Rn \times (0, T))} &  \leq cT^{\frac12 +\frac{\al}2} \| f \|_{L^\infty(0, T; L^\infty(0, \infty; \dot C^\al(\Rn)))}.
\end{split}
\end{align*}
Since $ R': \dot C^\al(\Rn) \rightarrow \dot C^\al(\Rn)$ is bounded,   it follows that
\begin{align*}%\label{1118-3-5}
\begin{split}
\|  R' V^1|_{x_n =0}\|_{L^\infty(0, T; \dot C^\al(\Rn))} &  \leq cT^{\frac12 +\frac{\al}2} \| R'  f \|_{L^\infty(0, T; L^\infty(0, \infty; \dot C^\al(\Rn)))} \\
&  \leq cT^{\frac12 +\frac{\al}2}  \| f \|_{L^\infty(0, T; L^\infty(0, \infty; \dot C^\al(\Rn)))}.
\end{split}
\end{align*}
Similarly, we have 
\begin{align*}
\begin{split}
\| R'  V^1|_{x_n =0}  \|_{L^\infty( \Rn; \dot C^{\frac12 +\frac{\al}2}(0, T))} &  \leq c  \|  f \|_{L^\infty(0, T; L^\infty(0, \infty; \dot C^\al(\Rn)))},\\
\| R' V^1|_{x_n =0} \|_{L^\infty (\Rn \times (0, T))} &  \leq cT^{\frac12 +\frac{\al}2} \|  f \|_{L^\infty(0, T; L^\infty(0, \infty; \dot C^\al(\Rn)))}.
\end{split}
\end{align*}
We complete the proof of Lemma \ref{prop1120}.
\qed

%\section{Proof of Proposition \ref{thoe1021-1}}\label{prooflemma1}
%\setcounter{equation}{0}

%\section{Proof of Theorem \ref{theo1123}  }
%\setcounter{equation}{0}
%\label{prooftheo1123}
% 

%\section{ }
%\setcounter{equation}{0}
%\label{proofprop0320-2}

\subsection{Conditions of $\phi$ in Assumption \ref{assume2} and Assumption \ref{assume3}}\label{proofprop0320-2}
 
 In this subsection, we provide some conditions of $\phi$ satisfying Assumption \ref{assume2} or Assumption \ref{assume3}.
\begin{lemma}\label{prop0320-1}
 Let $g$ be given by \eqref{0502-6} and let $\phi$ satisfy Assumption \ref{assume1}.  
 \begin{itemize}
 \item[(i)]
Suppose that there exists constant $r_0 \in (\frac32,2)$ such that  for all $ r_0 < t < 2$, 
 \begin{align} \label{260323-1}
\begin{split}
   2^\frac32 \phi(t) 
<  2^{-\frac32} \int_0^1   \phi(s)  ds. 
\end{split}
\end{align}
Then, $ M(t) < 0$ for all $ r_0 < t < 2$.

\item[(ii)]
Let $\displaystyle a_0 =\sup_{0 < t< 1} \phi'(t)$. Suppose that $\phi \in C^{1 +\frac{\al}2} ([0, 2 ) )$ satisfy   $\phi'(0)  =0$. Assume further that
there exist  $  r_0 \in (1,2)$ and $\de>0$  with $ r_0 +\de < 2$   such  that for $ r_0  < t < r_0+\de $, 
\begin{align*}%\label{260323-2}
-\frac12 \de^{-1} \phi(r_0) < \phi'(t) < -\frac{3}2 \sqrt{2} a_0 \de^{-\frac12}.
\end{align*}
Then,  $ M(r_0 +\de) < 0$.

\item[(iii)]
Suppose that $\phi \in C^{1 +\frac{\al}2} ([0, 2 ) )$ satisfy   $\phi(0) = \phi'(0)  =0$. Assume further that
 \begin{align}\label{260320-5}
 \inf_{1 < s<2} \phi'(s) \geq   - \frac14  \int_0^1 \phi'(s) (2 -s)^{-\frac12} ds.
 \end{align}
 Then, $ M(t) >    2^{-\frac12} \phi(1)$ for  all $ 1 < t<2$.
 \end{itemize}
 
\end{lemma}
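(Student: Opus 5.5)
Each part reduces to a sign analysis of $M(t)=\int_{-\infty}^t \frac{\phi(t)-\phi(s)}{(t-s)^{3/2}}\,ds$. We split the integral at $s=0$ and $s=1$ and use $\phi\equiv 0$ for $s<0$, so that $\int_{-\infty}^0 \phi(t)(t-s)^{-3/2}ds = 2\phi(t)t^{-1/2}$. For $t\in(1,2)$ the monotonicity in Assumption \ref{assume1} fixes the sign of $\phi(t)-\phi(s)$ on each subinterval, and we isolate the dominant contributions.

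For (i), fix $t\in(r_0,2)$ and write $M(t)=2\phi(t)t^{-1/2}+\int_0^1\frac{\phi(t)-\phi(s)}{(t-s)^{3/2}}ds+\int_1^t\frac{\phi(t)-\phi(s)}{(t-s)^{3/2}}ds$.
\begin{itemize}
\item The last integral is $\le 0$, since $\phi$ decreases on $(1,2)$ and so $\phi(s)\ge\phi(t)$ there.
\item In the middle integral, $(t-s)^{-3/2}\ge 2^{-3/2}$ because $t-s\le 2$. Hence $-\int_0^1\phi(s)(t-s)^{-3/2}ds\le -2^{-3/2}\int_0^1\phi$.
\item The positive part $\phi(t)\big(\int_0^1 (t-s)^{-3/2}ds+2t^{-1/2}\big)$ equals $2\phi(t)(t-1)^{-1/2}$. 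This is bounded only if $t-1$ stays away from $0$, which holds since $t>r_0>3/2$ gives $(t-1)^{-1/2}<\sqrt2$.
\end{itemize}
Combining, $M(t)<2\sqrt2\,\phi(t)-2^{-3/2}\int_0^1\phi = 2^{3/2}\phi(t)-2^{-3/2}\int_0^1\phi<0$ by \eqref{260323-1}. The constants should be checked so that they match $2^{3/2}$ exactly. The only delicate point is that the positive term over $(0,1)$ is controlled by $\phi(t)$ and not by $\phi(s)$.

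For (ii), use the derivative form $M(t)=2\int_0^t\phi'(s)(t-s)^{-1/2}ds$, which holds for $\phi\in C^{1+\alpha/2}$ with $\phi(0)=0$ by integrating by parts as in \eqref{1026-6-1}. Take $t=r_0+\delta$ and split at $1$ and $r_0$.
\begin{itemize}
\item On $(0,1)$, $\phi'\le a_0$, and $t-s\ge t-1$.
\item On $(1,r_0)$, $\phi'\le 0$. Since $t-s\ge\delta$, this piece is at most $\delta^{-1/2}(\phi(r_0)-\phi(1))\le 0$.
\item On $(r_0,t)$, the upper bound on $\phi'$ gives $\int_{r_0}^t\phi'(s)(t-s)^{-1/2}ds<-\tfrac32\sqrt2 a_0\delta^{-1/2}\cdot 2\delta^{1/2}=-3\sqrt2 a_0$.
\end{itemize}
The piece on $(0,1)$ is at most $a_0\cdot 2(t^{1/2}-(t-1)^{1/2})\le 2a_0$, since $t^{1/2}-(t-1)^{1/2}\le 1$. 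So this part is bounded by $2a_0<3\sqrt2 a_0$, and hence $M(r_0+\delta)<0$. The lower bound $-\tfrac12\delta^{-1}\phi(r_0)<\phi'$ keeps $\phi$ positive on the interval, so $\phi$ does not hit zero prematurely. I would invoke it only if it is needed to keep $\phi\ge0$ consistent with the assumptions.

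For (iii), fix $t\in(1,2)$ and write $M(t)=2\int_0^1\phi'(s)(t-s)^{-1/2}ds+2\int_1^t\phi'(s)(t-s)^{-1/2}ds$.
\begin{itemize}
\item In the first term $\phi'\ge0$ and $(t-s)^{-1/2}\ge(2-s)^{-1/2}$. So it is at least $2\int_0^1\phi'(2-s)^{-1/2}ds=:2J$.
\item The second term is at least $2\inf\phi'\cdot 2(t-1)^{1/2}\ge 4\inf\phi'\ge -J$ by \eqref{260320-5}.
\end{itemize}
Hence $M(t)\ge J$. Finally, $J\ge 2^{-1/2}\int_0^1\phi'=2^{-1/2}\phi(1)$, because $(2-s)^{-1/2}\ge 2^{-1/2}$. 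The inequality is strict unless $\phi'\equiv0$ on $(0,1)$. Across all three parts, the main obstacle is bookkeeping of constants rather than any conceptual step.
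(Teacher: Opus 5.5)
Your proposal is correct and follows essentially the same route as the paper: in (i) you bound $M(t)$ by the integral over $(-\infty,1)$ and use $(t-1)^{-1/2}<\sqrt2$ and $(t-s)^{-3/2}\ge 2^{-3/2}$. In (ii) and (iii) you use $M(t)=2\int_0^t\phi'(s)(t-s)^{-1/2}\,ds$ split at $s=1$, and your bound $2a_0$ for $\int_0^1\phi'(s)(t-s)^{-1/2}\,ds$ is even slightly sharper than the paper's $2\sqrt2\,a_0$. There is one harmless slip in (ii): the piece over $(1,r_0)$ is not bounded above by $\delta^{-1/2}(\phi(r_0)-\phi(1))$, since $\phi'\le 0$ together with $(t-s)^{-1/2}\le\delta^{-1/2}$ makes this a lower bound; you only need that this piece is $\le 0$, which is immediate from $\phi'\le 0$ there.
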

\begin{proof}
Let $ \frac32 <t<2$.  Since $ \phi$ is decresing in $(1,2)$, we have 
\begin{align*} 
\begin{split}
M(t)   & \leq     \int_{-\infty}^1 \frac{\phi(t) -\phi(s)}{(t -s)^\frac32} ds 
=   2 \phi(t) (t -1)^{-\frac12}  
- \int_0^1 \frac{ \phi(s)}{(t -s)^\frac32} ds 
\leq  2^\frac32 \phi(t) 
-  2^{-\frac32} \int_0^1   \phi(s)  ds. 
\end{split}
\end{align*}
From \eqref{260323-1}, we have  $M(t) < 0$ for all $t \in (r_0, 2)$. This completes the proof of part (i).

By integration by parts, $M(t)$ can be rewritten as:
 \begin{align*}%\label{1026-6-1-1}
\begin{split}
M(t) = \int_{-\infty}^t  \frac{  \phi(t) - \phi(s)  }{(t-s)^\frac32}  ds  
%& =    2\int_{-\infty}^t \big( \phi(t) - \phi(s) \big)  \frac{d}{ds}   (t-s)^{-\frac12}  ds\\
& =   2\int_{0}^t   \phi' (s)     (t-s)^{-\frac12}  ds= 2 \int_0^1 \cdots ds + 2\int_1^t \cdots ds.
\end{split}
\end{align*}
Direct calculation for $1 < t < 2$ yields:
\begin{align*}
2\int_{0}^1   \phi' (s)     (t-s)^{-\frac12}  ds \leq 2a_0 \int_0^1    (t-s)^{-\frac12}  ds = 4a_0( t^\frac12 -(t-1)^\frac12 )
\leq 4 \sqrt{2}a_0.
\end{align*}
 Since  $ \phi$ is decreasing in $(1,2)$, for $ r_0 < t< r_0 +\de$,  we have 
\begin{align*}
2\int_1^{r_0 +\de}  \phi' (s)     (r_0 +\de -s)^{-\frac12}  ds  \leq -3 \sqrt{2} \de^{-\frac12} a_0 \int_{r_0}^{r_0 +\de}       (r_0 +\de-s)^{-\frac12}  ds
=- 6 \sqrt{2}  a_0.
\end{align*}
Additionally, observe that
\begin{align*}
\phi(r_0 +\de) =\int_{r_0}^{r_0 +\de} \phi'(s) ds +\phi(r_0) \geq  -\frac12 \de^{-1} \phi(r_0)\int_{r_0}^{r_0 +\de}   ds +\phi(r_0) 
=  -\frac12  \phi(r_0)  +\phi(r_0) =\frac12 \phi(r_0).
\end{align*}

Combining these estimates, we obtain $M(t) \leq 4\sqrt{2} a_0 - 6\sqrt{2} a_0 = -2\sqrt{2} a_0 < 0$, which completes the proof of part (ii).

Note that for $1 < t < 2$, \eqref{260320-5} implies:
\begin{align}\label{260323-3_revised}
\int_1^t \phi'(s) (t - s)^{-1/2} ds \geq \inf_{1 < s<2} \phi'(s) \int_1^t  (t - s)^{-1/2} ds\geq -\frac{1}{2} \int_0^1 \phi'(s) (t - s)^{-1/2} ds.
\end{align}
Substituting \eqref{260323-3_revised} into the expression for $M(t)$, we find:
\begin{align*}
M(t) &= 2 \int_0^1 \phi'(s) (t - s)^{-1/2} ds + 2 \int_1^t \phi'(s) (t - s)^{-1/2} ds \\
&\geq \int_0^1 \phi'(s) (t - s)^{-1/2} ds \quad ( (t-s)^{-\frac12} > ( 2-0)^{-\frac12} ) \\
&\geq 2^{-1/2} \int_0^1 \phi'(s) ds = 2^{-1/2} \phi(1).
\end{align*}
 This completes the proof of part (iii).

 \end{proof}
 
\section*{Acknowledgement}
T. Chang is supported by NRF grant  RS-2026-25473074 and  Kyungkeun Kang is supported by NRF grant  RS-2024-00336346 and RS-2024-00406821.

\end{document}